\documentclass[a4paper,11pt]{amsart}

\usepackage[utf8]{inputenc}
\usepackage[T1]{fontenc}
\usepackage{amsfonts}
\usepackage{amsthm}
\usepackage{amsmath}
\usepackage[english]{babel}
\usepackage[all]{xy}
\usepackage{graphicx}
\usepackage{amscd}
\usepackage{latexsym}
\usepackage{hyperref}
\usepackage{mathrsfs}
\usepackage{enumerate}

\usepackage{color}

\usepackage{calc}

\theoremstyle{definition}
 \newtheorem{defi}{Definition}[section]

\theoremstyle{remark}
 \newtheorem{remark}[defi]{Remark}

\theoremstyle{plain}
\newtheorem{theo}[defi]{Theorem}
\newtheorem{prop}[defi]{Proposition}
\newtheorem{cor}[defi]{Corollary}
\newtheorem{lemma}[defi]{Lemma}

\newtheorem*{acknow}{Acknowledgments}

\newcommand{\zz}{\mathbb{Z}}

\newcommand{\rr}{\mathbb{R}}
\newcommand{\cc}{\mathbb{C}}

\newcommand{\Z}[1]{\zz_{#1}}

\newcommand{\abs}[1]{\left\vert #1 \right\vert}


\newcommand{\A}{\mathscr{A}}

\newcommand{\Mcal}{\mathcal{M}}
\newcommand{\Fcal}{\mathcal{F}}
\newcommand{\Gcal}{\mathcal{G}}
\newcommand{\Hcal}{\mathcal{H}}

\newcommand{\M}{\mathscr{M}}
\newcommand{\Mg}{\mathscr{M}^\mathfrak{g}}

\newcommand{\G}{\mathscr{G}}
\newcommand{\Gc}{\mathscr{G}^c}

\newcommand{\N}{\mathscr{N}}
\newcommand{\Nc}{\mathscr{N}^c}

\newcommand{\I}{\mathcal{I}}


\newcommand{\Symp}{\textbf{Symp}}
\newcommand{\Cob}{\textbf{Cob}}  
\newcommand{\Cobelem}{\textbf{Cobelem}}

\newcommand{\arnaque}{\begin{flushright}
$\Box$
\end{flushright}}

\newcommand{\MW}{Manolescu and Woodward}
\newcommand{\WW}{Wehrheim and Woodward}
\newcommand{\LL}{Lekili and Lipyanskiy}

\newcommand{\glag}{generalized Lagrangian correspondence}
\newcommand{\hsi}{symplectic instanton homology}
\newcommand{\hSi}{Symplectic instanton homology}

\newcommand{\gint}{generalized intersection point}

\newcommand{\qfh}{quilted Floer homology}
\newcommand{\fft}{Floer field theory}

\title[HSI: naturality, and maps from cobordisms]{Symplectic Instanton Homology: naturality, and maps from cobordisms}
\author[Guillem Cazassus]{Guillem Cazassus}

\address{Department of Mathematics, Indiana University, Bloomington, IN 47405}
\email{gcazassu@iu.edu}

\begin{document}

\begin{abstract}We prove that Manolescu and Woodward's Symplectic Instanton homology, and its twisted versions, are natural; and define maps associated to four dimensional cobordisms within this theory.

This allows one to define representations of the mapping class group, the fundamental group and the first cohomology group with $\Z{2}$ coefficients of a 3-manifold. We also provide a geometric interpretation of the maps appearing in the long exact sequence for symplectic instanton homology, together with vanishing criterions.
\end{abstract}

\maketitle
\tableofcontents

\section{Introduction}\label{sec:intro}

\hSi\ is a 3-manifold invariant that was introduced by \MW\ in \cite{MW}. In order to establish a Dehn surgery long exact sequence, we also introduced a twisted version in \cite{surgery}.  This twisted version is associated to a 3-manifold $Y$ with a class $c\in H_1(Y;\Z{2})$, and corresponds to \MW's invariant if $c=0$.

These groups are defined after choosing a specific Heegaard splitting of the 3-manifold (or more generally a Cerf decomposition), and it turns  out that two such decompositions yield isomorphic groups.

\vspace{.3cm}
\paragraph{\underline{Naturality}} Strictly speaking, the only invariant obtained a priori from such a procedure is the isomorphism type of the group. Such a construction is said to be natural if it permits to define an actual group. In other words, given any two such decompositions, one has to be able to find a \emph{canonical} isomorphism between the two groups obtained.
This is a central issue regarding several constructions, including:

\begin{itemize}
\item being able to define invariants that take the form of homology classes in these groups, such as for example classes associated to contact structures in  Heegaard-Floer theory.

\item defining maps between such groups, in particular maps associated to four-dimensional cobordisms. (Since such maps will be constructed handle by handle, naturality will be particularly important since one cannot compose maps that are defined only up to isomorphism.)

\item defining representations of the mapping class group, and (since a basepoint will be involved) of the fundamental group.
\end{itemize}

In Heegaard-Floer theory, naturality has been established by Juh\'asz, Thurston and Zemke in \cite{JuhaszThurston}. Among other things, it follows from their work that the group $\widehat{HF}$ depends on the choice of the pointed Heegaard diagram only through the basepoint. Likewise, the HSI groups also depend on the choice of a basepoint. However, the groups will not just depend on a class $c\in H_1(Y;\Z{2})$, but rather on an $SO(3)$-principal bundle $P$ over $Y$, for which $c$ is Poincare dual to $w_2(P)$.
Analogous constructions also appeared in \cite{horton} in a slightly different setting.

\begin{theo}\label{th:naturality} Let $(Y,z)$ be a pointed closed oriented 3-manifold, and $P$ an $SO(3)$-bundle over $Y$. Then, the group $HSI(Y,P,z; \Z{2})$ constructed in \cite[section~3]{surgery} (where it was denoted $HSI(Y,c,z; \Z{2})$), with coefficients in $\Z{2}$, are natural invariants of $(Y,P,z)$ in the following sense. Given two Cerf decompositions of $W$, the blow-up of $Y$ at $z$, seen as a "cobordism with vertical boundary", one can associate two "generalized Lagrangian correspondences" $\underline{L}$ and  $\underline{L}'$. Then, given a sequence of Cerf moves relating the two decompositions, the isomorphism $\Phi\colon HF(\underline{L}) \to HF(\underline{L}')$ constructed in section~\ref{sec:naturality} is independent in the choice of the sequence of Cerf moves.
\end{theo}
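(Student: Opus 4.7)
The plan is to follow the strategy of Juh\'asz--Thurston (and Zemke) in Heegaard Floer theory, adapted to the quilted Lagrangian setting. Consider the groupoid $\mathcal{C}(W)$ whose objects are Cerf decompositions of the blown-up cobordism $W$ with vertical boundary, and whose morphisms are generated by elementary Cerf moves: births and deaths of cancelling pairs of critical points, handle slides, exchanges of critical points with adjacent or distant indices, and isotopies of attaching data. Naturality amounts to promoting the construction $\underline{L}\mapsto HF(\underline{L})$ to a functor from $\mathcal{C}(W)$ to $\Z{2}$-vector spaces; equivalently, one must assign to each elementary move a canonical isomorphism $\Phi_m$ on $HF$, and then verify that the collection $\{\Phi_m\}$ respects all the relations in a presentation of $\mathcal{C}(W)$.

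For each elementary move, the canonical isomorphism is built out of standard operations in quilted Floer theory. Reordering or isotoping critical points changes the generalized correspondence only by an ambient Hamiltonian isotopy of the intermediate extended moduli space, yielding a canonical continuation map. The birth of a cancelling pair inserts a diagonal (or quasi-diagonal) correspondence, and the associated map is the quilted strip-shrinking (Wehrheim--Woodward) isomorphism; death is its inverse. Handle slides are modelled by replacing one Lagrangian in the sequence by its image under a symplectomorphism of the intermediate moduli space that is isotopic through symplectomorphisms to the identity; the associated map is again a continuation isomorphism. One must also treat the basepoint-related moves coming from the blowup at $z$, and lift every move to a move of pairs $(W,P)$ so that the $SO(3)$-bundle is tracked throughout.

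By a theorem of Cerf (with the refinements of Hatcher--Wahl, packaged in the form used by Juh\'asz--Thurston), the groupoid $\mathcal{C}(W)$ is presented by these generators subject to a finite list of relations: inverse relations between a move and its reverse, commutation of moves with disjoint support, and a small collection of model two-parameter relations arising from codimension-two strata in Cerf's pseudo-isotopy diagram. The theorem therefore reduces to checking, for each such relation, that the corresponding composition of canonical isomorphisms equals the identity. The inverse and commutation relations are immediate from the functorial properties of continuation maps, from the locality of quilted Floer homology with respect to distant moves, and from the naturality of strip-shrinking.

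The main obstacle is the verification of the model two-parameter relations: those combining a handle slide with a cancellation (which exhibit a triangle-type interaction), those involving several handle slides on overlapping attaching regions, and those coming from moving a cancellation move across the basepoint or across the bundle data. These are handled by constructing explicit one-parameter families of generalized Lagrangian correspondences that interpolate between the two sides of each relation, and identifying the resulting continuation map with the expected composition by invoking the quilted analogue of Seidel's exact triangle to relate handle-slide maps with cancellation maps. Particular care will be needed to perform every argument equivariantly in the $SO(3)$-bundle $P$, so that the canonical isomorphisms respect the twisting inherent in the definition of $HSI(Y,P,z;\Z{2})$.
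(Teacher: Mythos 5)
Your high-level strategy — set up a groupoid of Cerf decompositions and moves, assign canonical isomorphisms to generators, and verify the relations arising from codimension-two strata — is indeed the strategy of the paper, which also works in the spirit of Juh\'asz--Thurston. But the way you populate this framework contains a genuine mismatch with the objects involved, which would derail the argument.

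The key point you miss is that the generalized Lagrangian correspondence $\underline{L}(f)$ built from a Cerf decomposition depends only on the manifold pieces $W_i$ between consecutive level sets (together with the restricted $SO(3)$-bundles), not on the Morse function, the pseudo-gradient, or any choice of attaching data on those pieces. Consequently, \emph{handle slides and isotopies of attaching data are not moves at all in this framework}: they do not change any $L^c(W_i, P_i)$ and hence induce the identity on the nose, not via some continuation map to be analyzed. The only actual codimension-one Cerf moves are (a) birth/death of a cancelling pair, realized as one embedded geometric composition, and (b) a critical-value switch, realized as two embedded geometric compositions. Because of this, your list of two-parameter relations (handle slide/cancellation interaction, overlapping handle slides) does not match the codimension-two strata that actually need to be checked; the paper instead identifies five strata, corresponding to two simultaneous birth/deaths, a swallowtail ($A_3$) singularity, a birth/death concurrent with a double critical value, two double critical values, and a triple critical value. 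Likewise, the quilted Seidel exact triangle is not the right tool for these relations — they follow from a much more elementary commutativity lemma for geometric composition isomorphisms (the paper's Lemma~\ref{lem:commutativity_compo}), applied to decompose each relation into squares that commute at chain level.

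Two more gaps. First, you do not address the issue that the Lekili--Lipyanskiy $Y$-maps used for the geometric composition isomorphisms are a priori only mutual inverses up to nilpotents; for naturality one needs a \emph{canonical} isomorphism, and the paper establishes this by identifying the $Y$-maps with the Wehrheim--Woodward strip-shrinking isomorphisms (Proposition~\ref{prop:Ymaps_vs_strip_shrinking}), using Bottman's figure-eight bubble analysis. Second, the simply-connectedness of the relevant subspace of functions is not quite a black-box Cerf/Hatcher--Wahl statement here: one must work in the space of functions that are vertical on $\partial^{vert}W$, horizontal at both ends, and \emph{fiber-connected}, and the paper relies on Gay--Kirby's theorem on ordered functions plus a reordering argument to show this space minus its codimension-$\geq 3$ strata is simply connected. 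Without these two ingredients, and with the enumeration of moves and relations corrected, your outline would essentially become the paper's proof; as written, it would not go through.
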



\begin{remark}In this paper, all $HSI$ groups will be with $\Z{2}$-coefficients. We will drop $\Z{2}$ from the notations. Analogous results should hold with $\zz$ coefficients.
\end{remark}

 Given a diffeomorphism $\varphi\colon Y\to Y'$, one obtains from naturality an induced map  
 \[
 F_{\varphi}\colon HSI(Y,\varphi^* P,z) \to HSI(Y',P,\varphi(z)).
 \]
As a consequence, when $P$ is trivial,  the mapping class group of $(Y,z)$ acts on $HSI(Y,z)$, and the fundamental group $\pi_1(Y,z)$ acts on $HSI(Y,z)$ by moving the basepoint. We will describe these actions more explicitly in section~\ref{ssec:mcgrep}.


Furthermore, $\pi_0(Aut (P))\simeq H^1(Y; \Z{2})$ acts on $HSI(Y,P,z)$ in a nontrivial way, as we will see for Lens spaces in section~\ref{ssec:action_H1}. For this reason, these groups are not natural invariants of $(Y,w_2(P),z)$ as opposed to what we initially thought. However, when $w_2(P)=0$, since there is a preferred trivial bundle $P = Y\times SO(3)$, one can talk about $HSI(Y,z)$ as a natural invariant, even though $H^1(Y; \Z{2})$ still acts nontrivially on it in general.

\vspace{.3cm}

\paragraph{\underline{Maps from cobordisms}} Let $W$ be a compact connected oriented smooth 4-cobordism from $Y$ to $Y'$ (both closed connected 3-manifolds), $P$ an $SO(3)$-bundle over $W$ and $\gamma\colon [0,1]\to W$ a path from  $Y$ to $Y'$ connecting two basepoints $z$ and $z'$. We will define a map 
\[ 
F_{W,P,\gamma}\colon HSI(Y,P_{|Y},z) \to  HSI(Y',P_{|Y'},z')  
.\]
Here are our three main motivations.

First,  to provide a geometric interpretation of the maps appearing in the surgery exact sequence of \cite{surgery}:

\begin{theo}\label{th:vaguesurgery}(see Theorem~\ref{thinterpfleches} and Corollary~\ref{corinterpfleches} for precise statements) With suitable choices of principal bundles, the maps from cobordisms associated to 2-handle attachments provide a long exact sequence for a Dehn surgery triad.
 \end{theo}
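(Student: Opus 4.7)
The plan is to use the naturality theorem (Theorem~\ref{th:naturality}) to reduce the identification of the cobordism maps to a specific Cerf decomposition adapted to the Dehn-surgery configuration. I would work with a framed knot $K \subset Y$ and the triad $(Y, Y_0, Y_1)$ of 3-manifolds obtained by Dehn surgeries of slopes $\infty$, $0$, and $1$ on $K$, together with suitably twisted $SO(3)$-bundles matching those used in \cite{surgery}, as well as a compatible choice of paths connecting basepoints across the three 2-handle cobordisms.

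The first step is to recall the construction of the long exact sequence in \cite{surgery}: the three Floer groups $HSI(Y,\ldots)$, $HSI(Y_0,\ldots)$, $HSI(Y_1,\ldots)$ are computed as quilted Floer homologies of pairs of generalized Lagrangian correspondences living in a common symplectic background, and the long exact sequence comes from a Floer-theoretic exact triangle of three Lagrangian correspondences $L_\alpha, L_\beta, L_\gamma$ whose pairwise quilted intersections compute the three groups; the connecting maps are counts of quilted pseudo-holomorphic polygons with boundary on these Lagrangians.

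The second step is to compute $F_{W,P,\gamma}$ for each of the three 2-handle cobordisms of the triad. Using naturality, I can choose a Cerf decomposition whose middle level is a Heegaard splitting of the knot complement $Y \setminus \nu(K)$ common to all three surgeries, so that passing from one surgery to another amounts to varying a single attaching circle on the Heegaard surface. The 2-handle attachment then corresponds to inserting a single handle-attachment Lagrangian correspondence between the two generalized correspondences that compute the source and the target, and the induced chain-level map is precisely the Floer-theoretic map between the two corresponding pairs of Lagrangians, i.e.\ a connecting map in the Floer-theoretic triangle from \cite{surgery}. Assembling the three cases, the three cobordism maps match, at the chain level, the three structural maps of the Floer triangle, and naturality ensures this identification is independent of the Cerf decomposition; the exactness established in \cite{surgery} then transports verbatim to an exact triangle of cobordism maps.

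The main obstacle will be matching conventions: one must verify that the handle-attachment map, defined here as a composition of elementary relative Floer-theoretic maps through a Cerf decomposition, really agrees on the nose with the quilted polygon-counting map used to establish exactness in \cite{surgery}. This requires careful bookkeeping of the $SO(3)$-bundle $P$ and of the twisted coefficient system along the cobordism (so that the twistings on the three boundary 3-manifolds restrict correctly from a single bundle over the triad of 4-cobordisms), together with a verification that the local symplectic model near the 2-handle attachment is the same in both constructions. Once these matchings are in place, Theorem~\ref{th:vaguesurgery} follows directly from the surgery exact triangle of \cite{surgery}.
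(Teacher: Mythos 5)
Your overall strategy---identifying the cobordism maps for the three 2-handle attachments with the maps in the Floer-theoretic exact triangle from \cite{surgery}---is the right starting point and matches the paper's approach for two of the three maps (Theorem~\ref{thinterpfleches}). However, there are two genuine gaps in your outline.

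First, you assert that ``the three cobordism maps match, at the chain level, the three structural maps of the Floer triangle.'' The paper explicitly warns against this: the third morphism in the long exact sequence of \cite{surgery} is a \emph{connecting homomorphism}, and there is no a priori reason it should coincide with $F_{W_{\gamma\alpha},P_{\gamma\alpha}}$. The paper circumvents this by a Lisca--Stipsicz cyclic-symmetry argument (Corollary~\ref{corinterpfleches}): it reapplies the Dehn twist exact sequence of \cite{surgery} to each cyclic permutation of $(L_0, S)$, obtaining in each case an exact sequence in which two of the three maps are identified with cobordism maps via Theorem~\ref{thinterpfleches}; combining the three resulting exactness conditions yields exactness of the full triangle of cobordism maps. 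Your plan as written would need a direct identification of the connecting map, which the paper does not achieve and likely does not hold on the nose.

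Second, even for the two maps that do coincide, the identification of $\Phi_2$ (the Lefschetz-fibration count) with $F_{W_{\beta\gamma},P_{\beta\gamma}}$ is not a ``by definition'' matching of conventions as your outline suggests. It requires pushing the critical value of the Lefschetz fibration to the upper boundary and stretching the quilted strip, so that $C\Phi_2$ becomes homotopic to the contraction of the pair-of-pants product with a cocycle $c_{S,L_0}\in CF(L_0,\tau_S L_0)$; one must then separately check that $c_{S,L_0}$ equals the generator $C$, using that $L_0$ and $\tau_S L_0$ intersect transversely at a single point and that the constant section is regular and unique (via monotonicity of the Lefschetz fibration, cf.\ \cite[Prop.~4.10]{WWtriangle} and \cite[Lemma 2.27]{Seidel}). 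Your ``main obstacle'' paragraph gestures at a convention check, but the actual obstruction is this nontrivial homotopy and regularity argument, not bookkeeping of bundles.
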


Second, to  obtain geometric vanishing and nonvanishing criterions for these maps appearing in the long exact sequence. The blow-up formula in Corollary~\ref{eclatement} is an example of such a vanishing criterion, and one can hope to have analogs of the adjunction formulas as in Heegaard-Floer theory.

Last, to define invariants for 4-manifolds with boundary. In analogy with instanton homology, one should be able to define invariants similar with relative Donaldson polynomials that take values in the HSI groups associated to the 3-dimensional boundary. These maps should be seen as a first step towards this goal, and could correspond to the constant part of such polynomials, see subsection~\ref{ssec:bigpic} for more details.

In section~\ref{sec:naturality}, we briefly review   \WW's Floer field theory and how \hsi\ fits into this framework. We then prove naturality, using an argument similar with \cite{JuhaszThurston}. In section~\ref{sec:cob} we construct the maps from cobordisms, handle-by-handle as in \cite{OSholotri} but using \WW's quilt theory, prove that these are well-defined and give some of their properties.

\begin{acknow}
Section 3 was part of my PhD thesis, I would like to thank my advisors Paolo Ghiggini and Michel Boileau for their constant support. I also would like to thank Chris Woodward for pointing out to me the importance of naturality in the construction of the cobordism maps, and Nate Bottman, Paul Kirk and Yank{\i} Lekili for helpful conversations.
\end{acknow}

\section{Naturality}\label{sec:naturality}

The $HSI$ groups are built after choosing a Heegaard splitting (or a Cerf decomposition) of $Y$. \MW\ proved in \cite[sec.~6.3]{MW} that two choices of Heegaard splittings yield isomorphic groups. We prove that an isomorphism can be chosen in a canonical way, once a basepoint of $Y$ is fixed.

\subsection{\fft, \hsi\ and geometric composition}

We briefly review the construction of (twisted) \hsi\ within \WW's \fft\ framework. Then we recall the two approaches for the geometric composition isomorphisms: \WW's strip shrinking argument \cite{WWcompo}, and \LL's "Y-maps", which furnish the isomorphism in \MW's proof of stabilization invariance of HSI. For further details we refer to \cite{surgery} and \cite{LekiliLipyanskiy,LLcorr}, and to \cite{MW} for the original construction. See also \cite{WWfft} and \cite[Remark~3.5.8]{Wehrheimphilo}.

We then show that the two last approaches are equivalent in the setting of \hsi, using Bottman's results.

\begin{defi}[The category $\Cob_{2+1}$] \label{def:cobcat}
We will call \emph{category  of $SO(3)$-cobordisms with vertical boundaries}, and will denote it $\Cob_{2+1}$, the category whose:

\begin{itemize}

\item  objects are 4-tuples $(\Sigma,p,P, A^\partial)$, where $\Sigma$ is a compact connected oriented surface, with connected boundary,  $p\colon \rr/\zz \rightarrow \partial\Sigma$ is a diffeomorphism (parametrization), and $P$ is a trivial $SO(3)$-bundle over $\Sigma$, equipped with a flat connexion $A^\partial$ with no holonomy on $\partial \Sigma$ (which is the same as fixing a trivialization of $P$ on $\partial \Sigma$, up to an overall constant gauge transformation).

\item  morphisms from $(\Sigma_0,p_0,P_0, A^\partial_0)$ to $(\Sigma_1,p_1,P_1, A^\partial_1)$ are equivalence classes  of tuples $(W, \pi_{0},  \pi_{_1}, p,  P, \varphi_0, \varphi_1,  A^\partial)$, where $W$ is a compact oriented 3-manifold with boundary, $\pi_{\Sigma_0}$,  $\pi_{\Sigma_1}$ and $p$ are embeddings of $\Sigma_0$, $\Sigma_1$ and $\rr/\zz \times [0,1]$ into $\partial W$, the first  reversing the orientation, the two others preserving it. These are such that:
\begin{itemize}
\item $\partial W = \pi_{\Sigma_0}(\Sigma_0)\cup  \pi_{\Sigma_1}(\Sigma_1)\cup  p(\rr/\zz \times [0,1])$,
\item $\pi_{\Sigma_0}(\Sigma_0)$  and $\pi_{\Sigma_1}(\Sigma_1)$ are disjoint,
\item for $i=0,1$, $p(s,i) = \pi_{\Sigma_i}(p_i(s))$, and 
\[
\pi_{\Sigma_i}(\Sigma_i)\cap p(\rr/\zz \times [0,1])  = \pi_{\Sigma_i}(p_i(\rr/\zz)) = p(\rr/\zz \times \lbrace i\rbrace)
,\]
\end{itemize}

We will refer to   $p(\rr/\zz \times [0,1]) $ as the vertical part of $\partial W$, and will denote it $\partial^{vert} W$.

 $P$ is an $SO(3)$-bundle over $W$, the $\varphi_i$ are bundle isomorphisms between $P_i$ and $P$ covering $\pi_i$, and $A^\partial$ is a flat connexion on $P_{|\partial^{vert}W}$ with no holonomy on the $\rr/\zz$ direction. We assume that $A^\partial$ pulls back to $A^\partial_i$ via $\varphi_i$.

Two such tuples $(W, P, \cdots)$ and $(W', P', \cdots ')$ are equivalent and will be identified if there exists a diffeomorphism $\phi \colon W \rightarrow W'$ compatible with  the embeddings, and a bundle isomorphism $\varphi\colon P\to P'$ covering $\phi$ and compatible with the $\varphi_i$'s and the connexions on the vertical boundaries. 

\item  composition of morphisms consists in gluing along the embeddings and bundle isomorphisms.
\end{itemize}

\end{defi}

To keep notations short we will simply write $(W,P)$ from $(\Sigma_0, P_0)$ to $(\Sigma_1, P_1)$, or just $W$ from $\Sigma_0$ to $\Sigma_1$ when $P = W\times SO(3)$. Notice that in \cite{surgery} we used a slightly different category (using cohomology classes instead of bundles).

\begin{defi}[The category $\Symp$]  \label{def:sympcat}
We will call $\Symp$ the following category:  

$\bullet$ Its objects are tuples $(M, \omega , \tilde{\omega}  , R, \tilde{J})$ satisfying conditions $(i)$, $(ii)$, $(iii)$, $(iv)$, $(v)$, $(x)$, $(xi)$ and $(xii)$ of  \cite[Assumption 2.5]{MW}, namely:
\begin{enumerate}
\item[$(i)$] $(M,\omega)$ is a compact symplectic manifold.

\item[$(ii)$] $\tilde{\omega}$ is a closed  2-form on $M$.

\item[$(iii)$] The degeneracy locus $R\subset M$ of $\tilde{\omega}$ is a symplectic hypersurface  for $\omega$.

\item[$(iv)$] $\tilde{\omega}$ is $\frac{1}{4}$-monotone, that is $\left[ \tilde{\omega} \right] = \frac{1}{4} c_1(TM) \in H^2(M; \rr)$.

\item[$(v)$] The restrictions of $\tilde{\omega}$ and $\omega$ to $M\setminus R$ define the same cohomology class in  $H^2(M\setminus R; \rr)$.

\item[$(x)$] The minimal Chern number $N_{M\setminus R}$ with respect to $\omega$ is a positive multiple  of 4, so that the minimal Maslov number   $N = 2N_{M\setminus R}$ is a positive  multiple of $8$.

\item[$(xi)$] $\tilde{J}$ is an $\omega$-compa\-tible almost complex structure  on $M$, $\tilde{\omega}$-compa\-tible on $M\setminus R$, and such that $R$ is an  almost complex hypersurface for $\tilde{J}$.

\item[$(xii)$] Every index zero $\tilde{J}$-holomorphic sphere in $M$, necessarily contained in $R$ by monotonicity, has an intersection number with  $R$ equal to a negative multiple  of 2.

\end{enumerate}

$\bullet$ The set of morphisms between two objects consists in strings of elementary morphisms $\underline{L} = (L_{01}, L_{12}, \cdots ) $, modulo an equivalence relation:
\begin{itemize}
\item The elementary morphisms are  correspondences $L_{i(i+1)} \subset  M_i^- \times M_{i+1}$ that are Lagrangian for the monotone forms $\tilde{\omega}_i$, simply con\-nected, $(R_i, R_{i+1})$-compatible (in the sense that $L_{i(i+1)}$ intersects $R_i \times M_{i+1}$ and $M_i \times R_{i+1}$ transversally, and these two intersections coincide), such that $L_{i(i+1)} \setminus \left( R_i \times R_{i+1} \right)$ is spin, and such that every pseudo-holomor\-phic disc of $M_i^- \times M_{i+1}$ with boundary in $L_{i(i+1)}$ and zero area has an intersection number with $(R_i, R_{i+1})$ equal to a positive multiple  of $-2$.
\item The equivalence relation on strings of morphisms is generated by the following identification:  $(L_{01},\cdots ,  L_{(i-1)i},L_{i(i+1)}, \cdots )$ is identified with $(L_{01},\cdots ,  L_{(i-1)i}\circ  L_{i(i+1)}, \cdots )$ whenever the composition of $L_{(i-1)i}$ and $L_{i(i+1)}$ is embedded, simply connected, $(R_{i-1} , R_{i+1} )$-compatible, spin outside $R_{i-1}\times R_{i+1}$, satisfies the above  hypothesis concerning  pseudo-holomorphic discs, and also the following one: every  quilted pseudo-holomorphic cylinder of zero area  and with seam conditions in $L_{(i-1)i}$,   $L_{i(i+1)}$ and $L_{(i-1)i}\circ  L_{i(i+1)}$ has an intersection number with $(R_{i-1} , R_i, R_{i+1})$ smaller than $-2$.
\end{itemize}
\end{defi}

A (2+1)-\fft\ is then a functor $\textbf{F}\colon \Cob_{2+1} \to \Symp$. The functor involved in the definition of \hsi\ will associate to an object $(\Sigma, P, \cdots)$ a moduli space $\Nc(\Sigma,P)$, and to a morphism $(W,P, \cdots)$ a generalized Lagrangian correspondence $\underline{L}(W,P)$, about which we recall the definitions now. 

\begin{defi}\emph{(Extended moduli spaces)}\label{def:extmod}
\begin{itemize}
\item (Connection constant along the boundary) we will say that a connexion $A\in\A(\Sigma, P)$ is constant along $\partial \Sigma$ if, after having trivialized $P_{|\partial \Sigma}$ so that $A^\partial$ is the horizontal connexion, the connexion $A$, viewed as a $\mathfrak{g}$-valued one-form on $\Sigma$, can be written as $A_{|\partial \Sigma} = \theta ds$, for some constant element $\theta \in \mathfrak{g}$ (which value depends on the trivialization of $P_{|\partial \Sigma}$).

\item (Extended moduli space associated to a surface) 

Define $\A_F^\mathfrak{g}(\Sigma, P) \subset \A(\Sigma, P)$ to be the space of connexions that are flat on $\Sigma$, and constant on $\partial \Sigma$.

This subspace is acted on by the group 
\[
\Gc (\Sigma, P) = \left\lbrace \varphi \in Aut(P)\ |\ \varphi _{|P_{|\partial \Sigma}} = Id \right\rbrace
.\]
Denote $\Gc_0 (\Sigma, P) \subset \Gc (\Sigma, P)$ the connected component of the identity, and define the extended moduli space as the quotient $\Mg (\Sigma, P) = \A_F^\mathfrak{g} (\Sigma, P) /\Gc_0 (\Sigma, P)$.

This space carries a closed 2-form $\omega$ defined by:
\[ \omega_{[A]}([\alpha],[\beta]) = \int_{\Sigma'} \langle \alpha\wedge\beta \rangle   ,\]
with $ [A]\in \Mg (\Sigma, P) $ and $\alpha,\beta$ representing tangent vectors at $[A]$ of $\Mg (\Sigma, p)$, namely $d_A$-closed ${ad}(P)$-valued  1-forms, constant near $\partial \Sigma$.


 Furthermore it is acted on by $\G^{const}_0 / \Gc_0 \simeq SO(3)$ in a Hamiltonian fashion, with $\G^{const}_0$ standing for the gauge transformations acting by multiplication by a constant element over $\partial \Sigma$. After trivializing $P$ over $\partial \Sigma$ and identifying $\mathfrak{g}$ with its dual, the moment map is given by the element $\theta\in \mathfrak{su(2)}$ such that $A_{| \partial \Sigma} = \theta ds$.

\item  Denote $\N(\Sigma, P)$ the subset of $\Mg(\Sigma, P)$ consisting in equivalence classes of connections for which $\abs{\theta} < \pi \sqrt{2}$. The form $\omega$ is symplectic on $\N(\Sigma, P)$.

\item The space $\Nc(\Sigma, P)$ is defined as a symplectic cutting of $\Mg (\Sigma, P)$ for the function $\abs{\theta}$ at $\pi \sqrt{2}$:
\[
\Nc(\Sigma, P) =  \Mg(\Sigma,P)_{\leq \pi \sqrt{2}}  
.\]
It can be seen as a compactification of the subset $\N(\Sigma, P)$ by gluing the codimension 2 submanifold $R =  \lbrace |\theta| = \pi \sqrt{2} \rbrace / U(1) $:
\[
\Nc(\Sigma, P) =   \N(\Sigma,P) \cup R 
.\]
\end{itemize}

\end{defi}

\begin{remark}(Identification with Huebschmann-Jeffrey moduli space) When $P$ is trivial, a trivialization of $P$ induces a canonical identification of $\Mg(\Sigma,P)$ with the $SU(2)$-analogue $\Mg(\Sigma)$,  defined in \cite[Def. 2.1]{jeffrey} and \cite{huebschmann}. This is because the two spaces  $\A_F^\mathfrak{g}(\Sigma, P)$ and  $\A_F^\mathfrak{g}(\Sigma)$ are identified, and  $\Gc_0 (\Sigma, P) \simeq \Gc (\Sigma)/ \Z{2}$, with $\Gc (\Sigma)$ consisting in $SU(2)$-gauge transformations. Therefore all the results in \cite{jeffrey} and \cite{MW} about $\Mg(\Sigma)$ still hold for $\Mg(\Sigma,P)$.
\end{remark}

\begin{defi}[Moduli space and (generalized) Lagrangian correspondences associated to a 3-cobordism]  
\label{def:lagcor}
Let $(W, P)$ be an $SO(3)$-cobordism with vertical boundary from $(\Sigma_0, P_0)$ to $(\Sigma_1, P_1)$. 

\begin{itemize}
\item (Moduli space associated to an $SO(3)$-cobordism with vertical boundary). Define $\mathscr{A}_F^\mathfrak{g}(W,P)$ analogously as for a surface: flat connexions that are constant near $\partial^{vert}W$.

\item (Correspondence associated to a cobordism with vertical boundary.) Let  
\[ 
L(W,P) \subset \Mg (\Sigma_0, P_0)^- \times \Mg (\Sigma_1, P_1)
\]
be the correspondence consisting in the pairs of connexions that extend flatly  to $P$:
\[
L(W,P) = \{([A_{|\Sigma_0} ], [A_{|\Sigma_1} ])\ |\ A \in \Mg(W,P)\}
.\]

\item ($L^c(W,c)$). Define 
\[
L^c(W,p,c)\subset \Nc(\Sigma_0,p_0)^- \times \Nc(\Sigma_1,p_1)
\]
 as the closure  of $ L(W,P) \cap \left( \N (\Sigma_0,P_0)^- \times \N (\Sigma_1,P_1) \right)$. If $W$ is elementary in the sense of Cerf theory (it admits a Morse function with at most one critical point, and whose restriction to the vertical part of the boundary has no critical points), this is a morphism of $\Symp$.

\item  ($\underline{L}(W,P)$) Assume now that $W$ is not elementary anymore, and choose a Cerf decomposition $\underline{W} = (W_0, \cdots , W_k)$, where the $W_i$ are elementary. Define the following \glag:
\[
 \underline{L}(W,P)=  \left( L^c(W_0,P_0) , \cdots , L^c(W_k,P_k) \right) 
 ,\]
 with $P_i$ the restriction of $P$ to $W_i$.
\end{itemize}
\end{defi}

These satisfy the assumptions of definition \ref{def:sympcat}, and the \glag\  $\underline{L}(W,P)$, as a morphism in $\Symp$, is independent from the decompositions of $W$, see \cite[Prop.~3.18, Th.~3.22 ]{surgery}.

To a \glag\ $\underline{L}$ from $pt$ to $pt$ can be associated a homology group called \qfh. We recall roughly its definition, with $\Z{2}$-coefficients (see remark~\ref{rem:perturb_continuations} below, and \cite{WWqfc} for the complete construction):

\begin{defi}($\I(\underline{L})$,  $CF(\underline{L})$, $\partial$,  $HF(\underline{L})$).  \label{def:quiltfloerhomol}
\begin{itemize}
\item Define the set of \gint s as: 
\[
 \I(\underline{L}) =\lbrace (x_0, \cdots , x_k) ~|~ \forall i, (x_i,x_{i+1}) \in L_{i(i+1)}   \rbrace, 
 \]
We say that $\underline{L}$  has \emph{transverse intersection} if $L_{01}\times L_{23}\times \cdots$ and $L_{12}\times L_{34}\times \cdots$ intersect transversely in $M_0\times M_1\times M_2\times \cdots$. In this case, $\I(\underline{L})$ is a finite set (in general one should use Hamiltonian perturbations).
\item Let  $CF(\underline{L})$ be the free $\Z{2}$-module generated by  $\I(\underline{L})$.
\[CF(\underline{L}) = \bigoplus_{\underline{x}\in \I(\underline{L})} \Z{2} \underline{x},\]

\item The differential $\partial$ counts pseudo-holomorphic quilted strips.  
\item The \qfh\ $HF(\underline{L})$ is defined as the homology of the chain complex $(CF(\underline{L}),\partial)$. It is independent from the Hamiltonian perturbations and almost complex structures, and is relatively $\Z{8}$-graded.

\end{itemize}

\end{defi}

\begin{remark}\label{rem:perturb_continuations}
In this construction one has to fix auxiliary data, namely almost complex structures and Hamiltonian perturbations that are regular. 
Given two such choices $(\underline{J}, \underline{H})$ and $(\underline{J}', \underline{H}')$, one can define continuation maps as in \cite[Prop.~5.3.2]{WWqfc} to identify $HF(\underline{L}; \underline{J}, \underline{H})$ and $HF(\underline{L}; \underline{J}', \underline{H}')$ in a canonical way, so that $HF(\underline{L})$ is well-defined as a group. 
\end{remark}

A fundamental result of \WW's theory is that these groups are well-behaved for geometric composition, provided the following holds:

\begin{defi} \label{def:embcomp}
\begin{itemize}
\item (Geometric composition) Let $M_0$, $M_1$, $M_2$ be three symplectic manifolds, and $L_{01} \subset M_0^-\times M_1$, $L_{12} \subset M_1^-\times M_2$ be Lagrangian correspondences. The \emph{geometric composition} of $L_{01}$ with $L_{12}$ is the subset:
\[L_{01} \circ L_{12} = \pi_{02}(L_{01}\times M_2 \cap M_0\times L_{12}),\]
where $\pi_{02}$ denotes the projection
\[\pi_{02}\colon M_0\times M_1\times M_2 \rightarrow M_0\times  M_2 .\]

\item (Embedded geometric composition) A geometric composition  $L_{01} \circ L_{12}$ is said to be \emph{embedded} when:

\begin{itemize}
\item $L_{01} \times M_2$ and $M_0 \times L_{12}$ intersect transversally,

\item $\pi_{02}$ induces an embedding of $L_{01}\times M_2 \cap M_0\times L_{12}$ in $M_0\times  M_2$.
\end{itemize}

\item (Embedded$^+$ geometric composition) Assume now that $M_0$, $M_1$ and $M_2$ are objects of $\Symp$, with hypersurfaces $R_0$, $R_1$, $R_2$, and that $L_{01} \subset M_0^-\times M_1$ and $L_{12} \subset M_1^-\times M_2$ are morphisms in $\Symp$. We say that $L_{01} \circ L_{12} $ is \emph{embedded$^+$} if it is embedded, simply connected, $(R_0, R_2)$-compatible, and such that the  intersection number of every pseudo-holomorphic quilted cylinder  with  $(R_{i-1} , R_i,  R_{i+1})$ is smaller than $-2$.
\end{itemize}
\end{defi}

The following isomorphism has been proven in \cite{WWcompo} and then proven in a different setting by \LL\ using a more geometric construction, see also \cite[3.5.8]{Wehrheimphilo}. This last construction has been extended by \MW\ to the setting of the category $\Symp$:

\begin{theo}(\cite[Theorem 6.7]{MW})\label{th:LLcomp} Let $\underline{L}$ be a generalized Lagrangian correspondence as before. Assume moreover that for some index $i$, the geometric composition $L_{i-1,i} \circ  L_{i, i + 1}$ is embedded$^+$, then $HF (\underline{L})$ is canonically isomorphic to $HF (\cdots L_{i-1,i} \circ L_{i, i + 1} \cdots )$.
\end{theo}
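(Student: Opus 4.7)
The approach I would take follows \LL's ``Y-map'' construction, adapted to the category $\Symp$ as developed by \MW. This is more geometric than \WW's strip-shrinking argument, and in particular the intersection conditions with the degeneracy loci $R_i$ are more directly visible in it. The goal is to produce mutually inverse chain maps $\Phi$ and $\Psi$ between the two quilted Floer complexes and then check canonicity.

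First, I would construct
\[
\Phi\colon CF(\cdots, L_{i-1,i}, L_{i,i+1}, \cdots) \to CF(\cdots, L_{i-1,i} \circ L_{i,i+1}, \cdots)
\]
by counting rigid pseudo-holomorphic quilted maps over a Y-shaped domain: two incoming strips carrying seam conditions $L_{i-1,i}$ and $L_{i,i+1}$ that meet at a trivalent seam point, together with one outgoing strip carrying seam condition $L_{i-1,i}\circ L_{i,i+1}$, with the usual seam/boundary conditions for the remaining $L_{jk}$ further out. The patches meeting the Y-point are labeled by $M_{i-1},M_i,M_{i+1}$, and generalized intersection points serve as asymptotic ends. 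Reversing the roles of inputs and outputs defines a map $\Psi$ in the opposite direction. A standard analysis of the ends of the 1-dimensional components of these moduli spaces, where quilted strips break off, shows that $\Phi$ and $\Psi$ are chain maps. To compare them, I would examine a 1-parameter family of domains interpolating between the ``double Y'' representing $\Psi\circ\Phi$ and a degeneration that produces the identity continuation, and symmetrically for $\Phi\circ\Psi$; this yields the chain homotopies. Canonicity on $HF$ then follows from the usual independence of perturbation and almost complex structure data recalled in Remark~\ref{rem:perturb_continuations}.

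The main obstacle is compactness of these new moduli spaces. One must rule out sphere and disc bubbling, and in the $\Symp$ setting also control the behavior near the codimension-$2$ hypersurfaces $R_{i-1},R_i,R_{i+1}$. The embedded$^+$ hypothesis in Definition~\ref{def:embcomp} is tailored to this: transversality of $L_{i-1,i}\times M_{i+1}$ with $M_{i-1}\times L_{i,i+1}$, simple connectedness, $(R_{i-1},R_{i+1})$-compatibility and the spin condition for the composition guarantee that $L_{i-1,i}\circ L_{i,i+1}$ is itself a morphism of $\Symp$, so monotonicity arguments exclude most unwanted bubbling. Crucially, the negative intersection-number bound on pseudo-holomorphic quilted cylinders meeting $(R_{i-1},R_i,R_{i+1})$ is precisely what is needed to rule out the new bad limit configurations arising at the Y-point, which could otherwise obstruct either the chain-map property of $\Phi,\Psi$ or the existence of the chain homotopies. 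Once this codimension-$2$ analysis is in hand, the 1-dimensional boundary analysis reduces to the familiar pattern of Lagrangian Floer theory, and the isomorphism $HF(\underline{L}) \cong HF(\cdots L_{i-1,i}\circ L_{i,i+1}\cdots)$ follows.
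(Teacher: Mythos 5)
You correctly identify the route the cited theorem actually takes (\LL's $Y$-maps extended by \MW\ to $\Symp$), and your discussion of why the embedded$^+$ hypothesis is exactly what compactness near the $R_i$'s requires is sound. But there is a genuine gap at the final step, where you claim that a 1-parameter family of domains interpolates between the ``double $Y$'' quilt and the identity continuation, producing chain homotopies $\Phi\circ\Psi\simeq\mathrm{Id}$ and $\Psi\circ\Phi\simeq\mathrm{Id}$. That degeneration does not produce the standard strip: collapsing the middle patch of the glued $Y$-quilt makes the two $Y$-points collide and the two seams $L_{i-1,i}$ and $L_{i,i+1}$ pinch at a point, which is not a domain that tautologically computes a continuation map. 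In fact \LL\ (and hence \cite[Theorem 6.7]{MW}) do \emph{not} establish a chain homotopy to the identity; they establish the weaker statement $\Phi\circ\Psi = \mathrm{Id}+N_1$ and $\Psi\circ\Phi = \mathrm{Id}+N_2$ with $N_1,N_2$ nilpotent, by an action-filtration argument. The constant quilt at each generator contributes the diagonal identity term, while every non-constant quilt strictly changes the action; since the complex is finitely generated, the strictly-action-shifting part is nilpotent, which suffices for ``isomorphism'' but is genuinely weaker than a chain homotopy to the identity. This point is exactly the content of Remark~\ref{rem:Ymapsnilp} in this paper, and is the reason Proposition~\ref{prop:Ymaps_vs_strip_shrinking} is needed later (to upgrade to actual inverses in the cases used for naturality). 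As written, your proposal silently replaces the action-filtration argument with a homotopy that is neither constructed nor, in general, available.
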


Recall also the isomorphisms built by \LL\ involved in the proof of theorem~\ref{th:LLcomp}, called $Y$-maps: these maps 
\[ \begin{cases}
\Phi\colon HF(\cdots , L_{(i-1)i}, L_{i(i+1)}, \cdots) \to HF(\cdots , L_{(i-1)i}\circ L_{i(i+1)}, \cdots)  \\ 
 \Psi\colon HF(\cdots , L_{(i-1)i}\circ L_{i(i+1)}, \cdots)  \to HF(\cdots , L_{(i-1)i}, L_{i(i+1)}, \cdots) 
\end{cases} \]
are defined by counting quilted holomorphic strips (cylinders) as drawn in figure~\ref{fig:Ymaps}.

\begin{figure}[!h]
    \centering
    \def\svgwidth{.80\textwidth}
    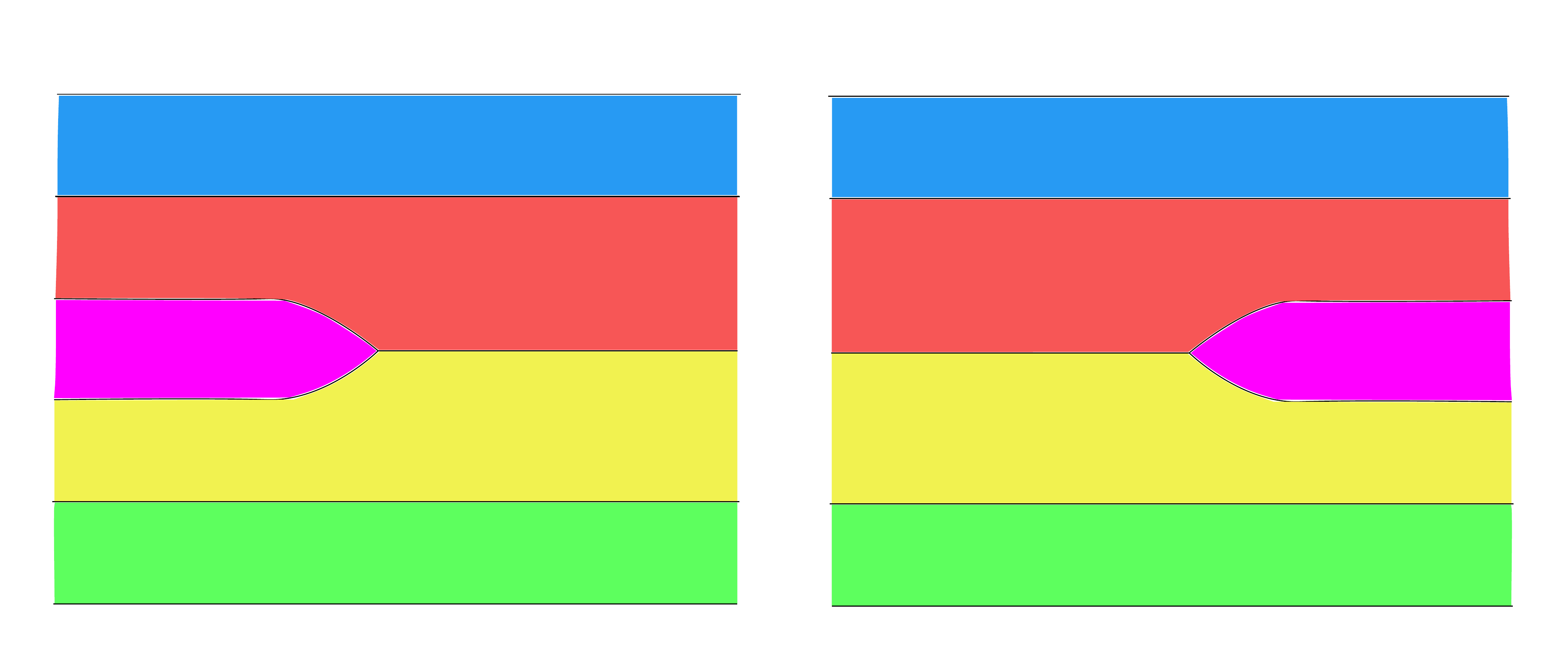
      \caption{}
      \label{fig:Ymaps}
\end{figure}

\begin{remark}\label{rem:Ymapsnilp}In order to prove that $\Phi$ and $\Psi$ are isomorphisms, \LL\ prove that these are inverses up to a nilpotent, i.e. there exists two nilpotent endomorphisms $N_1$ and $N_2$ such that $\Phi \circ \Psi = Id + N_1$ and $ \Psi\circ \Phi = Id + N_2$. In our particular case we will prove that $\Phi$ and $\Psi$ are actually inverses (which is one of the requirements for naturality).
\end{remark}

 
 
The original construction for the geometric composition isomorphism is due to \WW. They first identify the generators of the chain complexes  by the one-to-one correspondence \[p\colon \I(\cdots, L_{(i-1)i} , L_{i(i+1)}, \cdots ) \to \I(\cdots, L_{(i-1)i} \circ L_{i(i+1)}, \cdots )\]
induced by the projection forgetting the $M_i$ coordinate  (which is one-to-one  since the composition is embedded). And then prove that the differentials agree. For proving this they use a strip shrinking argument: they deform the quilted strip associated with $ HF(\cdots , L_{(i-1)i}\circ L_{i(i+1)}, \cdots)$ by  letting the width of the strip corresponding to $M_i$ tend to zero.

\begin{prop}\label{prop:Ymaps_vs_strip_shrinking}
In the setting of $\Symp$, the isomorphisms obtained by \LL's construction coincide with the isomorphisms given by identifying the generators as in \WW's construction.
\end{prop}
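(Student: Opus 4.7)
The plan is to show that both isomorphisms implement, at the chain level, the same bijection between generators, namely the projection
\[
p\colon \I(\cdots, L_{(i-1)i}, L_{i(i+1)}, \cdots) \longrightarrow \I(\cdots, L_{(i-1)i}\circ L_{i(i+1)}, \cdots),
\]
which is a bijection by the embeddedness of the composition. For \WW's construction this is tautological, so the work is to prove that \LL's Y-map $\Phi$ coincides with the chain map induced by $p$ (and similarly for $\Psi$).

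The key device is to promote the width $\delta>0$ of the $M_i$-patch in the Y-quilt to a deformation parameter. Since $\Phi$ is defined by counting rigid solutions, its value is independent of $\delta$, and we are free to take $\delta$ as small as we wish. As $\delta \to 0$ the $M_i$-patch collapses onto a seam, and a Y-quilt formally degenerates to a quilted strip for the collapsed correspondence $L_{(i-1)i}\circ L_{i(i+1)}$, which is exactly the quilt used by \WW\ in the strip-shrinking identification.

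I would then invoke Bottman's strip-shrinking compactness and gluing results (formulated in the framework of 2-associahedra and adapted to the embedded$^+$ setting by \MW) to identify the $\delta \to 0$ limit of the Y-quilt moduli problem with the strip-shrunk moduli problem of \WW. Under the embedded$^+$ hypothesis, the index-$0$ part of the shrunk problem consists precisely of constant strips at generators, in bijection with $\I(\cdots, L_{(i-1)i}\circ L_{i(i+1)}, \cdots)$ via $p^{-1}$. A standard cobordism argument between the $\delta>0$ Y-quilt moduli and the $\delta=0$ collapsed moduli then shows that $\Phi$ agrees with $p$ at the chain level; the same argument, run in reverse, handles $\Psi$.

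The main obstacle is controlling the $\delta\to 0$ compactness: one must rule out strip breaking, sphere and disc bubbling, and figure-eight bubbles along the shrinking seam, since any of these could create discrepancies between the two maps. This is where the specific hypotheses of Definition~\ref{def:sympcat} are essential. The $\frac{1}{4}$-monotonicity and minimal Chern/Maslov conditions (iv), (x) prevent low-energy disc and sphere bubbling, condition (xii) restricts sphere bubbles to $R$ in a controlled way, and the embedded$^+$ assumption contributes the crucial bound on the intersection number of quilted holomorphic cylinders with $(R_{i-1}, R_i, R_{i+1})$, exactly the input required by Bottman's analysis to exclude figure-eight bubbles in the limit. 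Verifying that these combine to give a clean cobordism between the two rigid moduli spaces will be the bulk of the proof.
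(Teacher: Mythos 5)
Your proposal follows essentially the same route as the paper's proof: promote the width of the $M_i$-patch in the Y-quilt to a deformation parameter, form the parametrized moduli space, invoke Bottman's analysis of figure-eight (and squashed figure-eight) bubbling in the strip-shrinking limit, rule these out using the monotonicity and intersection-number hypotheses built into $\Symp$ (the paper does this via the argument of \cite[Lemma~6.11]{MW} combined with the Cieliebak--Mohnke transversality result), and conclude that the $\delta=0$ moduli consists of constant strips realizing $p$. One small imprecision to flag: the cobordism between the $\delta>0$ and $\delta=0$ moduli spaces identifies the two maps only up to chain homotopy, hence in homology, not on the nose at the chain level as you wrote, and likewise "$\Phi$ independent of $\delta$" should be read at the level of induced maps, not of the counting chain map itself.
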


\begin{proof}

We prove the proposition for the map $\Phi$, the proof for $\Psi$ is analogous.

We can apply the strip shrinking procedure to the quilted  Y-surfaces: let $(\underline{S}_t)_{t\in (0,1]}$ be the family of quilted surfaces with $\underline{S}_1$ being the $Y$-surface of figure~\ref{fig:Ymaps}, and $\underline{S}_t$ obtained by shrinking the width of the patch associated with $M_i$, until zero when $t \to 0$.

Define  for any generalized intersection points

\begin{align*}
\underline{x} &\in \I(\cdots , L_{(i-1)i}, L_{i(i+1)}, \cdots), \\
\underline{y} &\in \I(\cdots , L_{(i-1)i}\circ L_{i(i+1)}, \cdots)
\end{align*} and for any $t\in (0,1]$,  the moduli space $\mathcal{M}_{t}(\underline{x},\underline{y})$ of index zero pseudo-holomorphic quilts with domain $\underline{S}_t$, limits $\underline{x}, \underline{y}$ at the ends,  and exponential decay at the puncture. Let $\mathcal{M}^{par}(\underline{x},\underline{y})$ denote the  parametrized moduli space

\[\mathcal{M}^{par}(\underline{x},\underline{y}) = \bigcup_{0<t\leq 1}{\mathcal{M}_{t}(\underline{x},\underline{y})} \]

For regular choices of almost complex structures and Hamiltonian perturbations (see  \cite[Th.~3.1.7]{McDuSal}), this is a smooth $1$-dimensional manifold  with boundary, and its boundary $\partial \mathcal{M}^{par}(\underline{x},\underline{y})  = \mathcal{M}_{1}(\underline{x},\underline{y})$ corresponds to the moduli space involved in the definition of $C\Phi$, the chain-level map inducing $\Phi$.

We aim at compactifying $\mathcal{M}^{par}(\underline{x},\underline{y})$. First, by standard Gromov compactness and the assumptions on the category $\Symp$ (definition~\ref{def:sympcat}) we know that for each $\epsilon >0$,   the space
\[\mathcal{M}^{par}_{\geq \epsilon}(\underline{x},\underline{y}) = \bigcup_{\epsilon \leq t\leq 1}{\mathcal{M}_{t}(\underline{x},\underline{y})} \] 
can be compactified by adding broken trajectories at each end,  namely by glueing the following space:

\[ \bigcup_{\tilde{\underline{x}}}{ \Mcal(\underline{x},\tilde{\underline{x}}) \times \Mcal^{par}_{\geq\epsilon, -1} (\tilde{\underline{x}},\underline{y})  } \cup \bigcup_{\tilde{\underline{y}}}{ \Mcal^{par}_{\geq\epsilon, -1}(\underline{x},\tilde{\underline{y}}) \times \Mcal(\tilde{\underline{y}},\underline{y}})  , \]
where  $\Mcal(\underline{x},\tilde{\underline{x}})$ and $\Mcal(\tilde{\underline{y}},\underline{y})$ stand for the moduli spaces of index 1 quilted strips (modulo translations), involved respectively in the definition of the differentials of $CF(\cdots , L_{(i-1)i}, L_{i(i+1)}, \cdots)$ and $CF(\cdots , L_{(i-1)i}, L_{i(i+1)}, \cdots)$, and $\Mcal^{par}_{\geq\epsilon, -1} (\tilde{\underline{x}},\underline{y})  $ stands for the parametrized moduli space of index $-1$ $Y$-quilts, which is generically zero dimensional.

We now want to understand what can be the limit of a sequence of quilted curves $\underline{u}_i \in \Mcal_{t_i}(\underline{x},\underline{y})$  of index 0, or $\underline{u}_i \in \Mcal_{t_i, -1}(\underline{x},\underline{y})$ ( of index -1) for which $t_i \to 0$. According to Bottman-Wehrheim, a (possibly squashed) figure eight bubble may form:

If energy concentrates near a point where the two seams corresponding to $L_{(i-1)i}$ and $L_{i(i+1)}$ come together, then after rescaling and passing to a subsequence, one obtains either 
\begin{itemize}
\item a figure eight bubble (in  the case when the concentration speed is commensurate to the  shrinking speed $t_i$). This is a triple of pseudo holomorphic maps
\begin{align*}
u_{i-1} &\colon (-\infty, 0] \times \rr \to M_{i-1},\\
u_{i} &\colon [0, 1] \times \rr \to M_{i}, \\
u_{i+1} &\colon [1, \infty) \times \rr \to M_{i+1} \\
\end{align*}
that satisfy seam conditons in $L_{(i-1)i}$ and $L_{i(i+1)}$.

\item  a "squashed" figure eight bubble (in the case where energy concentrates slower).This is a triple of  maps
\begin{align*}
u_{i-1} &\colon (-\infty, 0] \times \rr \to M_{i-1},\\
u_{i} &\colon  \rr \to M_{i}, \\
u_{i+1} &\colon [0, \infty) \times \rr \to M_{i+1} \\
\end{align*}
with $u_{i-1}$ and $u_{i+1}$ pseudo holomorphic, and that satisfy seam conditons in $L_{(i-1)i}$ and $L_{i(i+1)}$.
\end{itemize}

Bottman's removal of singularity theorem \cite[Th~2.2]{Bottman} states that in the first case, such a triple extends continuously to a quilted 2-sphere. For the squashed case that follows from removal of singularity for disc bubbling: the maps $u_{i-1}$ and $u_{i+1}$ can be "folded" to a map from the half-plane to $ M_{i-1}\times M_{i+1}$, with boundary condition in $ L_{(i-1)i} \circ L_{i(i+1)}$.

We now recall  \MW's argument \cite[Lemma~6.11]{MW} that permits to rule out such above mentioned bubbling penomenon. First, every such bubble must have zero area, otherwise, by monotonicity and the assumption on the minimal Maslov index, the principal component of the limit would have an index too small to exist generically. Hence such bubbles have zero area, and are therefore contained in the zero locus $\underline{R}$ of the symplectic forms. Moreover, the kernel of the symplectic form along $\underline{R}$ coincides with the vertical spaces of an $S^2$-fibration: each patch is then contained in such spheres, and all these patches (eventually squashed) glue to a single map from the sphere to such a fiber. The glued map is orientation preserving, hence if it is nonconstant, it is a branched covering of the sphere, and its intersection number with $\underline{R}$ is a positive multiple of the intersection number of $R_0$ with the fiber $F_0$, which is equal to -2, see \cite[Lemma~4.11]{MW}.

By a result of Cieliebak and Mohnke \cite[Prop~6.9]{CieliebakMohnke},  the principal component of the limit,  generically intersects $\underline{R}$ transversely.  From this fact and the fact that each bubble has intersection smaller than -2, it follows that the total number of intersection points  is strictly greater than the number of bubbles. Hence, there exists transverse intersection points on which no bubbles are attached, which is impossible for a limit of curves disjoint from $\underline{R}$.


Therefore, no bubbling can happen for the limit of curves $u_i$: up to passing to a subsequence, it should converge to a quilted strip of index zero, with limits $\underline{x}$ and $\underline{y}$. Call $\Mcal_0(\underline{x}, \underline{y})$ their moduli space. On the one hand, $\Mcal^{par}_{ -1} (\tilde{\underline{x}},\underline{y})$ is compact (i.e. finite), and $\Mcal^{par}(\underline{x}, \underline{y}) $ is compactified to a closed 1-manifold with boundary, the boundary being: 

\[ \Mcal_0(\underline{x}, \underline{y}) \cup \Mcal_1(\underline{x}, \underline{y}) \cup \bigcup_{\tilde{\underline{x}}}{ \Mcal(\underline{x},\tilde{\underline{x}}) \times \Mcal^{par}_{ -1} (\tilde{\underline{x}},\underline{y})  } \cup \bigcup_{\tilde{\underline{y}}}{ \Mcal^{par}_{ -1}(\underline{x},\tilde{\underline{y}}) \times \Mcal(\tilde{\underline{y}},\underline{y}})  , \]
This implies that the moduli spaces $\Mcal_0(\underline{x}, \underline{y})$ and $\Mcal_1(\underline{x}, \underline{y})$ define the same map  up to homotopy, and in particular induce the same map in homology. Now, it is a standard fact that $ \Mcal_0(\underline{x}, \underline{y})$ is empty if $\underline{y} \neq p(\underline{x})$ (recall that $p$ stands for the projection forgetting the $M_i$ coordinate), and consists in a single point if $\underline{y}= p(\underline{x})$: this is because it is a zero dimensional moduli space endowed with an $\rr$-action: its points are $\rr$-invariant, and therefore constant. So the chain map induced by the $\Mcal_0(\underline{x}, \underline{y})$'s corresponds to \WW's isomorphism. Therefore \LL's $Y$-maps are homotopic to \WW's isomorphisms.

%

\end{proof}

The following lemma will play an important role later, and says that the order of compositions of Lagrangian correspondences do not matter at the level of quilted Floer homology:

\begin{lemma}\label{lem:commutativity_compo}

(A) Let $M_0$, $M_1$, $M_2$ and $M_3$ be objects in $\Symp$, for $i=0, 1,2$ $L_{i(i+1)}\subset M_i^-\times M_{i+1}$ be elementary morphisms in $\Symp$, and $\underline{L}\in hom(pt, M_0)$ and  $\underline{L}'\in hom( M_3, pt)$ be generalized Lagrangian correspondences:
\[
\xymatrix{ pt\ar[r]^{\underline{L}} &M_0 \ar[r]^{L_{01}} &M_1 \ar[r]^{L_{12}} & M_2 \ar[r]^{L_{23}} &M_3 \ar[r]^{\underline{L}'} & pt .}  
\] 

 Assume that:
\begin{itemize}
\item $L_{01}\circ L_{12}$ is embedded$^+$ in the sense of definition~\ref{def:embcomp},
\item $(L_{01}\circ L_{12}) \circ L_{23}$ is embedded$^+$,
\item $ L_{12} \circ L_{23}$ is embedded$^+$,
\item $L_{01}\circ ( L_{12} \circ L_{23})$ is embedded$^+$.
\end{itemize}
Then the following diagram commutes, where each arrow is a composition isomorphism:
\[
\xymatrix{ HF( \underline{L}, L_{01},  L_{12}, L_{23} , \underline{L}'    ) \ar[r]^{i}\ar[d]^{k} & HF( \underline{L}, L_{01}\circ  L_{12}, L_{23} , \underline{L}'    )\ar[d]^{j} \\
HF( \underline{L}, L_{01},  L_{12} \circ L_{23} , \underline{L}' )\ar[r]^{l} & HF( \underline{L}, L_{01}\circ  L_{12} \circ L_{23} , \underline{L}' ) .}  
\] 

(B) Let $M_0$, $M_1$, $M_2$, $M_3$, $M_4$ and $M_5$ be objects in $\Symp$, for $i=0, 1,3,4$ $L_{i(i+1)}\subset M_i^-\times M_{i+1}$ be elementary morphisms in $\Symp$, and $\underline{L}\in hom(pt, M_0)$, $\underline{L}'\in hom( M_2, M_3)$ and  $\underline{L}''\in hom( M_5, pt)$ be generalized Lagrangian correspondences:
\[
\xymatrix{ pt\ar[r]^{\underline{L}} &M_0 \ar[r]^{L_{01}} &M_1 \ar[r]^{L_{12}} & M_2 \ar[r]^{\underline{L}'} &M_3 \ar[r]^{L_{}34}  &M_4 \ar[r]^{L_{45}} &M_5 \ar[r]^{\underline{L}''}& pt .}  
\] 

 Assume that:
\begin{itemize}
\item $L_{01}\circ L_{12}$ is embedded$^+$
\item $L_{34}\circ L_{45}$ is embedded$^+$
\end{itemize}
Then the following diagram commutes, where each arrow is a composition isomorphism:
\[
\xymatrix{ HF( \underline{L}, L_{01},  L_{12}, \underline{L}',    L_{34}, L_{45},  \underline{L}'' ) \ar[r]^{i}\ar[d]^{k} & HF( \underline{L}, L_{01}\circ  L_{12}, \underline{L}',    L_{34}, L_{45},  \underline{L}''   )\ar[d]^{j} \\
HF( \underline{L}, L_{01},  L_{12}, \underline{L}',    L_{34}\circ L_{45},  \underline{L}'' )\ar[r]^{l} & HF( \underline{L}, L_{01}\circ  L_{12}, \underline{L}',    L_{34}\circ L_{45},  \underline{L}'' ) .}  
\] 
\end{lemma}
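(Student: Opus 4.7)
My plan is to exploit Proposition~\ref{prop:Ymaps_vs_strip_shrinking} to replace each Y-map isomorphism by \WW's strip-shrinking isomorphism, whose chain-level effect is simply to identify generators via the projection forgetting the absorbed middle coordinate. Once this reduction is in place, both parts of the lemma collapse to the statement that composing two such projections in either order gives the same projection.

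For part (A), I would first observe that set-theoretic associativity of geometric composition, together with the four embedded$^+$ hypotheses, implies $(L_{01}\circ L_{12})\circ L_{23}=L_{01}\circ (L_{12}\circ L_{23})$ as embedded Lagrangian correspondences in $M_0^-\times M_3$, so that all four vertices of the square are well-defined and the two targets of $j$ and $l$ coincide. Next, by Proposition~\ref{prop:Ymaps_vs_strip_shrinking}, each arrow in the diagram is induced at the chain level by the projection on a generalized intersection point $\underline{x}=(x_0,x_1,x_2,x_3)$ forgetting the coordinate being absorbed into the composed correspondence: the arrows $i$ and $l$ forget $x_1$, while $j$ and $k$ forget $x_2$. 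Both compositions $j\circ i$ and $l\circ k$ therefore act on generators by the same total projection $(x_0,x_1,x_2,x_3)\mapsto (x_0,x_3)$. Since every composition isomorphism is a chain map by Theorem~\ref{th:LLcomp}, the two composed chain maps will agree, and a fortiori will induce the same isomorphism on $HF$.

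For part (B), the same reduction applies verbatim: the two composition isomorphisms for $L_{01}\circ L_{12}$ and for $L_{34}\circ L_{45}$ act on generalized intersection points by forgetting coordinates in the $M_1$- and $M_4$-factors respectively, and these two projections commute trivially since they act on disjoint factors of the generator tuple.

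The main technical subtlety will concern the auxiliary data: the regular almost complex structures and Hamiltonian perturbations used for each of the four chain complexes in the square may have to be chosen differently at each corner. I plan to handle this via the canonical continuation isomorphisms recalled in Remark~\ref{rem:perturb_continuations}, which let one interpret each corner as a well-defined group and the diagram as a diagram of canonical isomorphisms independent of the perturbation data, so that the commutativity question genuinely reduces to the chain-level identification described above.
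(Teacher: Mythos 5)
Your approach is essentially the same as the paper's: reduce the Y-map composition isomorphisms to \WW's strip-shrinking isomorphisms via Proposition~\ref{prop:Ymaps_vs_strip_shrinking}, and observe that at the chain level these are projections forgetting the absorbed coordinate, so that the diagram commutes on generators. In the transverse case this is immediate, exactly as you say, and your observation in part (B) that the two projections act on disjoint factors ($M_1$ and $M_4$) is the right reason that part is even simpler.

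There is, however, a gap in how you handle the non-transverse case. Invoking Remark~\ref{rem:perturb_continuations} to say each corner is ``a well-defined group independent of the perturbation data'' does make the commutativity question perturbation-independent as a question about the induced maps on $HF$, but it does not by itself reduce matters to the chain-level picture: to see all four arrows simultaneously as projections on generators you need a single coherent choice of perturbation data across all four sequences, not four independent choices reconciled afterwards by continuation maps (the continuation isomorphisms relate perturbations of the \emph{same} sequence, not different sequences). The missing step is the construction of that coherent choice. The paper supplies it: choose regular perturbations for the shortest sequence $(\underline{L}, L_{01}\circ L_{12}\circ L_{23}, \underline{L}')$ (resp.\ $(\underline{L}, L_{01}\circ L_{12}, \underline{L}', L_{34}\circ L_{45}, \underline{L}'')$) and extend to the longer sequences by taking the perturbations \emph{trivial} on $M_1$ and $M_2$ (resp.\ $M_1$ and $M_4$). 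The point is then that, because the compositions are embedded, transversality for the shortest sequence propagates to the longer ones, so this choice is regular for all four corners at once, and all four chain maps literally are the coordinate projections. With that insertion your argument closes and coincides with the paper's proof.
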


\begin{proof} In the case when the generalized intersection is transverse on the nose, then the isomorphism is obvious at the chain level, if one uses the strip shrinking approach to the isomorphisms. In the intersection is not transverse, one has to introduce perturbations on each symplectic manifolds appearing in the sequence, but since the compositions are assumed to be embedded, one can take the perturbations on the manifolds where composition is taken to be trivial (namely $M_1$ and $M_2$ in case $(A)$, and $M_1$ and $M_4$ in case $(B)$). Indeed, transversality for the sequence $ \underline{L}, L_{01}\circ  L_{12} \circ L_{23} , \underline{L}'$ in case $(A)$ (resp. for $ \underline{L}, L_{01}\circ  L_{12}, \underline{L}',    L_{34}\circ L_{45},  \underline{L}'' $ in case $(B)$) implies transversality for the other longer sequences. Commutativity of the diagram follows.


\end{proof}

\subsection{Naturality via Cerf theory}\label{ssec:natu_Cerf}

Let $Y$ be a closed connected oriented 3-mani\-fold, $\widehat{P}$ an $SO(3)$-bundle over $Y$, and $z\in Y$ a basepoint. Define $W$ as the real oriented blow-up of $Y$ at $z$, and fix an embedding of $S^1\times [0,1] \to \partial W$. $\widehat{P}$ pulls back to a bundle $P$ over $W$, and since $P_{|\partial W} \simeq P_z \times \partial W$, $P$ comes equipped with a flat connexion $A^\partial$ on $\partial W$, making $(W,P)$ to a morphism of $\Cob_{2+1}$ from the disc to itself. Let $f$ and $f'$ denote two Cerf decompositions of $W$ (i.e. morphisms of the category $\Cobelem$, in the notations of \cite{surgery}) giving rise to two \glag s $\underline{L}(f)$ and $\underline{L}(f')$ from the point to itself, and thus two groups $HF(\underline{L}(f))$ and $HF(\underline{L}(f'))$.

From Cerf theory one knows that $f$ and $f'$ can be joined by a sequence $f_0 = f$, $f_1$, $f_2$, $\cdots$, $f_k = f'$ of decompositions, where $f_{i+1}$ is obtained from $f_i$ by a Cerf move. By \cite[Th.~3.22]{surgery} we know that $\underline{L}(f_i)$ and $\underline{L}(f_{i+1})$ differ from one (or two) embedded geometric composition, hence there exists an isomorphism $\Theta_i \colon HF(\underline{L}(f_i)) \to HF(\underline{L}(f_{i+1})) $, seen either as a $Y$-map, or a \WW's isomorphism.
The composition of all the $\Theta_i$ yields then an isomorphism from $HF(\underline{L}(f))$ to $HF(\underline{L}(f'))$, and one then needs to check that it doesn't depend on the choice of the intermediate decompositions $\lbrace f_i \rbrace$. Without loss of generality, assume for now on that $f=f'$, and let $\Theta = \Theta_{k-1}\circ \cdots \circ \Theta_1\circ \Theta_0$. It then suffices to check that  $\Theta = Id$.

Assume first, for simplicity, that the Lagrangian correspondences in $\underline{L}(f)$ are in general position, so that $\I(\underline{L}(f))$ is a finite set transversely cut out. At the chain level, it suffices to check that the permutation of $\I(\underline{L}(f))$ is the identity. One can see that by remarking that all the  $\I(\underline{L}(f_i))$ can be canonically identified with a common space independent of the  decompositions: the moduli space of framed flat connexions on $\widehat{P}$. Call $\I$ this common moduli space, $\varphi_i\colon \I(\underline{L}(f_i)) \to \I$ the identification, and $p_i\colon \I(\underline{L}(f_i))\to \I(\underline{L}(f_{i+1}))$ the identification given by the projections, the $\varphi_i$ commute with the $p_{i}$ ($ \varphi_{i} = \varphi_{i+1}\circ p_i$), and $\varphi_1 = \varphi_k$. It follows that the composition of all the maps $p_i$  is the identity.


However, in most cases $\underline{L}(f)$ is not in general position, so one needs first to perturb the Lagrangians by Hamiltonian isotopies before defining its quilted Floer homology. For general Hamiltonian isotopies, it is not anymore true that the perturbed $\I(\underline{L}(f_i))$ and $\I(\underline{L}(f_{i+1}))$ are in one-to-one correspondence, nor it is true  that one can identify them with a common  moduli space $\I$ independent from the decompositions. 

Such a proof should in principle be possible to extend to the non-transverse case by using gauge theoretic perturbations such as holonomy perturbations. Unfortunately we couldn't find the right perturbations to achieve this, as standard holonomy perturbations would at best make the intersections clean along $SO(3)$-orbits, but not transverse (the intersections would be transverse at the level of character varieties, but one would need the perturbations to break the $SO(3)$-symmetry). The problem of finding such perturbations seems interesting, and would permit to conclude directly. Instead, we will prove naturality by using Cerf theory, and checking some elementary moves, in a similar spirit of \cite{JuhaszThurston}.

%
%

Let $\Fcal_0$ be the set of functions on $W$ that are Morse,  ``excellent'' (i.e. all critical values are distinct), vertical on $\partial^{vert}W$ (i.e. mapping $p(s,t)$ to $t$), and $f^{-1}(i) = \Sigma_i$, $i=0,1$ (horizontal on $\Sigma_0$  and $\Sigma_1$), and  "fiber-connected" (i.e. with all level sets connected). The space   $\Fcal_0$ corresponds to the top stratum of a natural stratification $\lbrace \Fcal_k \rbrace$, whose first strata will be defined in definition~\ref{def:stratification}. Before doing so, let us explain the strategy. 
 The space $\Fcal_0$ is disconnected, but becomes path connected when we attach to it $\Fcal_1$. 
 Now $\Fcal_0 \cup \Fcal_1$ is not simply connected, but becomes simply connected when one attach $\Fcal_2$, as we shall see in lemma~\ref{lem:Fleq2simplyconnected}. Therefore, we will mostly be interested in $\Fcal_1$ and $\Fcal_2$.
 

We view $\Fcal_0$ inside $\Fcal$, the set of all functions that are vertical on $\partial^{vert}W$, horizontal on $\Sigma_0$ and $\Sigma_1$, and fiber-connected. One can think of $\lbrace \Fcal_k \rbrace$ as  the intersection of  two stratifications  $\left\lbrace \Gcal_k \right\rbrace_k $ and  $\left\lbrace \Hcal_k\right\rbrace_k $, where $\Gcal_k$ stands for the functions with $k$ more critical points than critical values, namely:
\begin{defi} Define $\Gcal_0$, $\Gcal_1$ and $\Gcal_2$ as follows:
\begin{itemize}
\item $\Gcal_0$ consists in functions injective on their set of critical points,
\item $\Gcal_1$ consists in functions for which exactly two critical values coincide,
\item $\Gcal_2 = \Gcal_2^a \cup \Gcal_2^b $, with $\Gcal_2^a$ consisting in functions with two double critical values (i.e. $f(c_1)=f(c_2)$ and $f(c_3)=f(c_4)$, but $f(c_1)\neq f(c_3)$ for critical points $c_i$'s), and $\Gcal_2^b$ consisting in functions with one triple critical value (i.e. $f(c_1)=f(c_2)=f(c_3)$).
\end{itemize}

\end{defi}

And the first strata of $\left\lbrace \Hcal_k\right\rbrace_k $ are given by:
\begin{defi}
\begin{itemize}
\item $\Hcal_0$ consists in Morse functions,
\item $\Hcal_1$ consists in functions with Morse singularities and one $A_2$ singularity (birth-death)
\item $\Hcal_2 = \Hcal_2^a \cup \Hcal_2^b $, with $\Hcal_2^a$ consisting in functions with two  $A_2$ singularities, and $\Hcal_2^b$ functions with one  $A_3^\pm$ singularity (also known as swallowtail singularities).
\end{itemize}
\end{defi}

Let then  $\Fcal_k$ correspond to the intersection of the two stratifications:
\begin{defi}\label{def:stratification}
\begin{itemize}
\item $\Fcal_0 = \Gcal_0 \cap \Hcal_0$,
\item $\Fcal_1 = (\Gcal_0 \cap \Hcal_1) \cup (\Gcal_1 \cap \Hcal_0) $,
\item $\Fcal_2 = (\Gcal_0 \cap \Hcal_2) \cup (\Gcal_1 \cap \Hcal_1) \cup (\Gcal_2 \cap \Hcal_0)$.
\end{itemize}
\end{defi}

The following lemma mostly relies on work of Gay-Kirby \cite{GayKirby}.

\begin{lemma}\label{lem:Fleq2simplyconnected}
$\Fcal_{\leq 2} = \Fcal_0 \cup \Fcal_1 \cup \Fcal_2$ is simply connected.
\end{lemma}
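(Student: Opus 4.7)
The plan is to follow the classical strategy from Cerf theory, using the fact that the ambient space $\Fcal$ is contractible and that the strata $\Fcal_k$ form a codimension $k$ stratification. First I would observe that $\Fcal$ sits inside an affine subspace of $C^\infty(W, \rr)$ (the vertical/horizontal boundary conditions are linear) intersected with the open condition of fiber-connectedness. Cerf's and Gay-Kirby's analyses of the discriminant locus of Morse functions show that $A_2$ (birth-death) singularities form a codimension $1$ stratum, while pairs of simultaneous $A_2$'s and $A_3^\pm$ (swallowtail) singularities form codimension $2$ strata; likewise the loci with $k$ coincidences among critical values have codimension $k$. The decomposition of $\Fcal_k$ given in Definition~\ref{def:stratification} is exactly the codimension $k$ part of the combined stratification, and $\Fcal_{\geq 3}$ has codimension $\geq 3$ in $\Fcal$.

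The proof then proceeds in two steps. Given a loop $\gamma\colon S^1 \to \Fcal_{\leq 2}$, by standard transversality applied to $\gamma$ I may homotope it (within $\Fcal_{\leq 2}$) to a loop lying in $\Fcal_0 \cup \Fcal_1$ and meeting $\Fcal_1$ transversely in finitely many points. Next, since $\Fcal$ is path-connected and in fact contractible (modulo the fiber-connectedness issue addressed below), there is a continuous null-homotopy $\bar\gamma\colon D^2 \to \Fcal$ extending $\gamma$. Applying the Gay--Kirby / Cerf transversality theorem for $2$-parameter families of functions \cite{GayKirby}, $\bar\gamma$ can be perturbed rel boundary to be transverse to every stratum of $\Fcal$. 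Since $\Fcal_{\geq 3}$ has codimension $\geq 3$ and $\dim D^2 = 2$, a generic such $\bar\gamma$ misses $\Fcal_{\geq 3}$ entirely, so the perturbed disk takes values in $\Fcal_{\leq 2}$, completing the null-homotopy.

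I expect the main obstacle to be the fiber-connectedness condition, which cuts out an open subset of the affine space of boundary-compatible smooth functions and whose homotopy type is not immediately transparent. Concretely, during a $1$- or $2$-parameter deformation, a level set may temporarily become disconnected, forcing the path to leave $\Fcal$. To handle this, I would use the standard stabilization trick (as in the Reidemeister--Singer theorem for Heegaard splittings, and systematically exploited by Gay--Kirby): wherever a deformation threatens to disconnect a fiber, one introduces canceling pairs of critical points (a local move in $\Fcal_1$) to preserve fiber-connectedness, and removes them once the deformation is complete. Because these stabilizations are themselves $A_2$-type moves, they take place in $\Fcal_{\leq 2}$ and do not affect the conclusion. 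The remaining technical work is the local model analysis of $2$-parameter unfoldings near $\Gcal_2 \cap \Hcal_0$, $\Gcal_1 \cap \Hcal_1$ and $\Gcal_0 \cap \Hcal_2$ points to verify the stratification is locally cone-like, at which point simple connectedness of $\Fcal_{\leq 2}$ follows from that of $\Fcal$.
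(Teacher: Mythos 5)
Your proposal correctly identifies the two ingredients — the codimension-$\geq 3$ argument for $\Fcal_{\geq 3}$ and the fact that fiber-connectedness is the real obstruction — but the way you propose to overcome the obstruction is where the argument has a genuine gap. You suggest a "stabilization trick": whenever a generic null-homotopy $\bar\gamma\colon D^2 \to \Fcal$ threatens to produce a disconnected fiber, insert a canceling pair of critical points and remove it afterward. Stated at this level, this is not yet a proof: in a $2$-parameter family the locus where fiber-connectedness fails can be $1$-dimensional, and inserting/removing birth-death pairs must be done coherently along that locus and rel the boundary circle; you have not explained how to do this consistently, nor why the resulting modification of $\bar\gamma$ stays in $\Fcal_{\leq 2}$ or why it does not change the relative homotopy class. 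The phrase "removes them once the deformation is complete" is precisely the step that needs to be constructed.

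The paper avoids the issue by a different reduction. Rather than trying to repair a generic contraction, it first homotopes the loop, \emph{within} $\Fcal_{\leq 2}$, to a loop of \emph{ordered} Morse functions (those for which critical values increase with Morse index). This is a parametrized version of Gay--Kirby's reordering lemma (\cite[Lemma~4.10]{GayKirby}): for a generic $1$-parameter family of pseudo-gradients, the unions of unstable manifolds $U_z$ of a lower-index family $z$ never meet the stable manifolds $S_{z'}$ of a higher-index family $z'$, so one can push critical values past one another, in family, without gradient-flow obstructions. The crucial point — which your stabilization idea does not address — is that this reordering \emph{automatically} preserves fiber-connectedness: a level set can only disconnect if an $(n-1)$-handle is attached before a $1$-handle, and the reordering process moves lower-index critical points down, never producing that configuration. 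Once the loop lives in the space of ordered fiber-connected functions, \cite[Th.~4.9]{GayKirby} gives a null-homotopy inside that space, hence inside $\Fcal_{\leq 2}$. I would encourage you to either (a) adopt the ordered-functions reduction, or (b) make your stabilization argument precise as a statement about $2$-parameter families, including why the modified disk stays in $\Fcal_{\leq 2}$ and why its boundary is unchanged.
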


\begin{proof} Without the fiber-connectedness assumption, this would follow from the fact that the complement of $\Fcal_{\leq 2}$ in $\Fcal$ has codimension $\geq 3$ (by definition of being a stratification \cite{cerf1970stratification}). Let $(f_t)_{0\leq t\leq 1}$ be a loop in $\Fcal_{\leq 2}$. We first show that one can  homotope it in $\Fcal_{\leq 2}$ to a loop $(g_t)_{0\leq t\leq 1}$ of \emph{ordered functions}, i.e. functions such that for any critical points $x$, $x'$ with Morse index satisfying $I(x)<I(x')$, one has $f(x)<f(x')$. This is a parametrized analogue of \cite[lemma~4.10]{GayKirby}.

Indeed, the critical points of the $(f_t)_{0\leq t\leq 1}$ come in families $z\colon [a,b] \to W$, with $a$ the birth-time and $b$ the death time of the critical point.

Given a pseudo-gradient $X_t$ for $f_t$, define for a family  $z\colon [a,b] \to W$ the union of the stable, respectively unstable manifolds:
\begin{align*}
S_z &= \left\lbrace (t,x)\ |\ t\in [a,b], x\in S_{z(t)}(f_t) \right\rbrace \\
U_z &= \left\lbrace (t,x)\ |\ t\in [a,b], x\in U_{z(t)}(f_t) \right\rbrace,
\end{align*}
where $S_{z(t)}(f_t)$, resp. $U_{z(t)}(f_t)$ denotes the stable, resp. unstable manifold of $z_t$ with respect to the pseudo-gradient $X_t$.

For most $t$, $U_{z(t)}(f_t)$ is a smooth submanifold of dimension $i(z)$ the Morse index of $z(t)$, except when $t$ is either $a$ or $b$, in which case it can be of dimension $i(z) +1$ or $i(z)$, depending on the index of the other critical points it dies (or born) with. In any case, $U_z$ is always of dimension at most $i(z)+1$ (i.e. a subset of a finite union of submanifolds of dimension  $i(z)+1$). Therefore, if $z$ and $z'$ are such paths, so that $i(z) < i(z')$, for a generic pseudo-gradient $X_t$, one will have $U_{z'}\cap S_z = \emptyset$.

It follows that one can reorder the critical points (see \cite[Sec.~4]{milnor_hcob}): one gets a homotopy $f_{t,s}$ so that $f_{t,0}=f_t$, and $f_{t,1}$ is ordered for each $t$. Furthermore, each $f_{t,s}$ is fiber-connected, since the $f_{t,0}$ are, and fiber-connectedness may fail when one attach an $(n-1)$-handle before a 1-handle ($n=3$ stands for $\dim W$), which doesn't happen during the reordering process. 

Now, by \cite[Th.~4.9]{GayKirby}, the loop $f_{t,1}$ can be contracted inside the space of ordered functions. It therefore follows that $f_t$ is null homotopic.
\end{proof}

It follows from lemma~\ref{lem:Fleq2simplyconnected} that the fundamental group of $\Fcal_0 \cup \Fcal_1$ is generated by \emph{meridians} of $\Fcal_2$ (i.e. loops in  $\Fcal_0 \cup \Fcal_1$ going around $\Fcal_2$). We now describe such meridian loops, after which we will show that the corresponding maps in $HSI$ are identities. There are five cases to consider, corresponding respectively to the five components of
\[
\Fcal_2 = (\Gcal_0 \cap \Hcal_2^a) \cup (\Gcal_0 \cap \Hcal_2^b) \cup (\Gcal_1 \cap \Hcal_1) \cup (\Gcal_2^a \cap \Hcal_0) \cup (\Gcal_2^b \cap \Hcal_0)
.\]

\vspace{.3cm}
\paragraph{\underline{$\Gcal_0 \cap \Hcal_2^a$}}

Let $f\in \Gcal_0 \cap \Hcal_2^a$: $f$ has two birth-death (or $A_2$) singularities $z_1$ and $z_2$, with $f(z_1)<f(z_2)$. Call $P_1 = \lbrace x_1,y_1 \rbrace$  and $ P_2 = \lbrace x_2,y_2 \rbrace$ the corresponding two pairs of critical points. A loop around $f$ can be described by a sequence of four moves:
\begin{itemize}
\item[M1] birth of $\lbrace x_1,y_1 \rbrace$,
\item[M2] birth of $\lbrace x_2,y_2 \rbrace$, 
\item[M3] death of $\lbrace x_1,y_1 \rbrace$, 
\item[M4] death of $\lbrace x_2,y_2 \rbrace$. 
\end{itemize}


Assume that for $i=1,2$, the pair $P_i$ correspond to a sequence of cobordisms $(W^{x_i},W^{y_i})\colon\Sigma^i_0\to\Sigma^i_1\to\Sigma^i_2$ that compose to a cylinder, so that $W$ has a Cerf decomposition:
\[\xymatrix{
D^2\ar[r]^{\underline{W}} & \Sigma^1_0 \ar[r]^{W^{x_1}} & \Sigma^1_1 \ar[r]^{W^{y_1}} & \Sigma^1_2 \ar[r]^{\underline{W}'} & \Sigma^2_0 \ar[r]^{W^{x_2}} & \Sigma^2_1 \ar[r]^{W^{y_2}} & \Sigma^2_2 \ar[r]^{\underline{W}''} &  D^2.
}\]

Denote $(\underline{L},L^{x_1}, L^{y_1}, \underline{L}',L^{x_2}, L^{y_2},\underline{L}'')$ the corresponding sequence in $\Symp$, so that the sequence of moves correspond to diagram~\ref{diag:twobirthdeath}, where each arrow corresponds to a geometric composition. Commutativity of diagram~\ref{diag:twobirthdeath} at the $HSI$ groups level immediately follows from  part $(A)$ of lemma~\ref{lem:commutativity_compo} applied to this sequence.

\begin{equation}\label{diag:twobirthdeath}
\xymatrix{
(\underline{L},L^{x_1}, L^{y_1}, \underline{L}',L^{x_2}, L^{y_2},\underline{L}'')  \ar[r]^{M3} & (\underline{L},L^{x_1} \circ L^{y_1}, \underline{L}',L^{x_2}, L^{y_2},\underline{L}'') \ar[d]^{M4} \\
 (\underline{L},L^{x_1}, L^{y_1}, \underline{L}',L^{x_2} \circ L^{y_2},\underline{L}'') \ar[u]^{M2} & (\underline{L},L^{x_1} \circ L^{y_1}, \underline{L}',L^{x_2} \circ L^{y_2},\underline{L}'')\ar[l]^{M1} 
}.
\end{equation}

\vspace{.3cm}
\paragraph{\underline{$\Gcal_0 \cap \Hcal_2^b$}}

We refer to  \cite[Sec.4]{JuhaszThurston} for more details and a more general discussion about  $A_3^\pm$ singularities.  An $A_3^-$-singularity is a singularity for which (in our case) the function can be modelled in a chart as:
\[
f(x_1,x_2,x_3) = -x_1^4 - x_2^2 + x_3^2 .
\]
Such a function belongs to a 2-parameter family
\[
f_{\lambda, \mu}(x_1,x_2,x_3) = -x_1^4 +\lambda x_1^2  +\mu x_1 - x_2^2 + x_3^2
.\]
In general, there can be other kinds of $A_3^-$ singularities, but since the Morse functions we are considering can only have index 1 or 2 critical points, these are the only possible behaviour. There can also be $A_3^+$, for which the local model is $f(x_1,x_2,x_3) = -x_1^4 - x_2^2 + x_3^2$, but this case can be obtained from an $A_3^-$-singularity by reversing the cobordism.

Take a circle of small enough radius on the $\lambda, \mu$-plane. It crosses $\Fcal_1$ at three points, for which either $(\lambda>0, \mu=0)$, or $8\lambda^3 -27\mu^2=0$ (two solutions), corresponding  respectively to a critical point switch, a death and a birth.

Pick two values $\lambda_0,\mu_0$ for which $\mu>0$ and $f_{\lambda_0,\mu_0}$ has three critical points $a, b, c$, of Morse index respectively 1, 2, 2, and such that $f_{\lambda_0,\mu_0}(a)<f_{\lambda_0,\mu_0}(b) <f_{\lambda_0,\mu_0}(c)$. When the point $(\lambda,\mu)$ rotates clockwise in the circle, the following moves happen:
\begin{itemize}
\item[M1] $(a,b)$ die together,
\item[M2] $c$ becomes $b$ (which we could think as a diffeomorphism equivalence),
\item[M3] $(a,c)$ born together,
\item[M4] $b$ and $c$ switch position.
\end{itemize}

\begin{remark}
One could alternatively view such a sequence as a two-function with a swallowtail singularity, as in \cite{GayKirby}.
\end{remark}

Let $\Sigma_0,\Sigma_1,\Sigma_2^c,\Sigma_3$ be level surfaces of $f_{\lambda_0,\mu_0}$, that are part of a Cerf decomposition: so that $a$ is between $\Sigma_0$ and $\Sigma_1$,  $b$ is between $\Sigma_1$ and $\Sigma_2^c$, and  $c$ is between $\Sigma_2^c$ and $\Sigma_3$. Let also  $\Sigma_2^b$ be a level surface that separates $b$ and $c$ after they switch position. The situation is summarized in figure~\ref{fig:birth_death_birth}.

\begin{figure}[!h]
    \centering
    \def\svgwidth{.60\textwidth}
    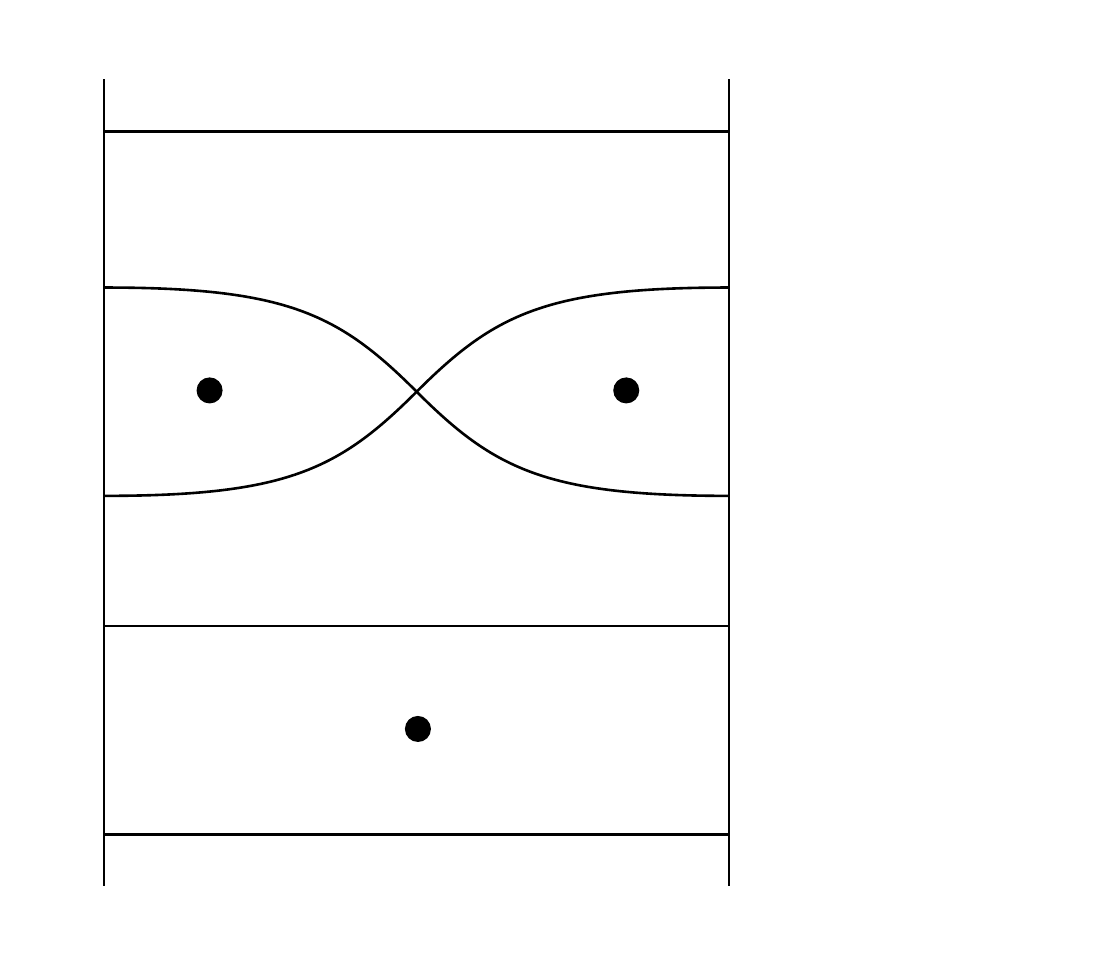
      \caption{}
      \label{fig:birth_death_birth}
\end{figure} 

The sequence of moves can be summarized in the central rectangle of diagram~\ref{diag:birth_death_birth}, where each arrow between two surfaces represents the part of $W$ between these surfaces. Notice that the moves $M2$ and $M4$ can be factored: for $M2$ we used the fact that the cobordisms $\Sigma_0 \to \Sigma_2^b$ and $\Sigma_0 \to \Sigma_2^c$, containing respectively the cancelling pairs $(a,c)$ and $(a,b)$, are cylinders. For $M4$ we composed the two $2$-handle attachment $\Sigma_1 \to \Sigma_3$. This is not an elementary cobordism anymore, but a compression body. What matters is that after applying the FFT functor, the Lagrangian correspondences still compose in an embedded$^+$ way (see \cite[Th.~3.22]{surgery}). After applying the Floer Field Theory (FFT) functor we get a similar diagram  in $\Symp$. Now, observe that the two squares in dashed arrows are of the form of the one in lemma~\ref{lem:commutativity_compo}, part (A), and therefore induce commutative diagrams after taking quilted Floer homology (and completing the diagrams with the \glag\ $\underline{L}$ and $\underline{L}'$ corresponding to the rest of $W$). Commutativity of the central rectangle follows.
 
 \begin{equation}\label{diag:birth_death_birth}
 \xymatrix{
 &  \left(\Sigma_0 \to \Sigma_1 \to \Sigma_3 \right) \ar@{-->}[rd]\ar@{-->}[ddd]& \\ 
 \left( \Sigma_0 \to \Sigma_1  \to \Sigma_2^b  \to \Sigma_3 \right) \ar@{-->}[ru]\ar[rr]^{M4}& & \left( \Sigma_0 \to \Sigma_1  \to \Sigma_2^c  \to \Sigma_3 \right) \ar[d]^{M1}\\ 
 \left( \Sigma_0 \to \Sigma_2^b \to \Sigma_3 \right) \ar[u]^{M3}& &  \left(\Sigma_0 \to \Sigma_2^c \to \Sigma_3 \right)\ar@{-->}[ld]\ar[ll]^{M2}\\ 
 &  \left(\Sigma_0 \to \Sigma_3\right)\ar@{-->}[lu]& . 
 }\end{equation}

\vspace{.3cm}
\paragraph{\underline{$\Gcal_1 \cap \Hcal_1$}} $f$ has one birth-death singularity, of a pair of critical points $(a,b)$ say, and two critical points $c,d$ for which $f(c)=f(d)$. A loop around this singularity can be described by the following moves:
\begin{itemize}
\item[M1] birth of $(a,b)$,
\item[M2] $c$ and $d$ switch,
\item[M3] death of $(a,b)$,
\item[M4] $d$ and $c$ switch.
\end{itemize}
Let $W^a\colon \Sigma_0 \to \Sigma_1$ and $W^b\colon \Sigma_1 \to \Sigma_2$ stand respectively for the handle attachments of $a$ and $b$, likewise let $W^c\colon \Sigma_3 \to \Sigma_4$, $W^d\colon \Sigma_4 \to \Sigma_5$ stand respectively for the handle attachments of $c$ and $d$, when $c$ is below $d$, and $W^{d'}\colon \Sigma_3 \to \Sigma_4'$, $W^{c'}\colon \Sigma_4' \to \Sigma_5$,  stand respectively for the handle attachments of $c$ and $d$, when $c$ is above $d$, so that $W^c\cup_{\Sigma_4}W^d = W^{d'}\cup_{\Sigma_4'}W^{c'}$.

Assume that the pair $(a,b)$ comes before $(c,d)$ in $W$ (the other case is analogous) and denote $\underline{V}$,  $\underline{V}'$ and  $\underline{V}''$, the complements of these handle attachments in $W$, equipped with fixed Cerf decompositions, so that for example:
\[
W = \xymatrix{D^2 \ar[r]^{\underline{V}} & \Sigma_0 \ar[r]^{W^a} &  \Sigma_1 \ar[r]^{W^b} & \Sigma_2  \ar[r]^{\underline{V}'} &  \Sigma_3 \ar[r]^{W^c} & \Sigma_4  \ar[r]^{W^d} & \Sigma_5  \ar[r]^{\underline{V}''} & D^2 }.
\]

Denote $\underline{L}$, $\underline{L}'$, $\underline{L}''$, $L^a$, $L^b$, $L^c$, $L^d$, $L^{d'}$, $L^{c'}$ the corresponding (generalized) Lagrangian correspondences. The sequence of moves can now be described in diagram~\ref{diag:birthdeath_switch} below. In dashed lines we have decomposed the critical point switch moves, by composing the correspondences $L^c \circ L^d =L^{d'}\circ L^{c'}$, which compose in an embedded$^+$ way.
 \begin{equation}\label{diag:birthdeath_switch}
 \xymatrix{
 &  (\underline{L},L^a, L^b, \underline{L}',L^c\circ L^d, \underline{L}'') \ar@{-->}[rd]\ar@{-->}[ddd]& \\ 
 (\underline{L},L^a, L^b, \underline{L}',L^c, L^d, \underline{L}'') \ar@{-->}[ru]\ar[rr]^{M2}& &(\underline{L},L^a, L^b, \underline{L}',L^{d'}, L^{c'}, \underline{L}'') \ar[d]^{M3}\\ 
 (\underline{L},L^a \circ L^b, \underline{L}',L^c, L^d, \underline{L}'') \ar[u]^{M1}& &  (\underline{L},L^a \circ L^b, \underline{L}',L^{d'}, L^{c'}, \underline{L}'')\ar@{-->}[ld]\ar[ll]^{M4}\\ 
 &  (\underline{L},L^a \circ L^b, \underline{L}',L^c \circ L^d, \underline{L}'')\ar@{-->}[lu]& . 
 }\end{equation}
Commutativity at the quilted Floer homology level again follows from lemma~\ref{lem:commutativity_compo} applied to the two dashed squares.
%

\vspace{.3cm}
\paragraph{\underline{$\Gcal_2^a \cap \Hcal_0$}} $f$ has two  double critical values (i.e. $f(a)=f(b)$ and $f(c)=f(d)$) 

This is again a consequence of lemma~\ref{lem:commutativity_compo}, part $(B)$. One would get a square, where each arrow of the square correspond to a critical point switch, and can be factored. In the end one can divide the square to four squares as in diagram~\ref{diag:two_double_critval} below, where each dashed arrow corresponds to a composition. 
In diagrams~\ref{diag:two_double_critval} and \ref{diag:triple_crit_value}, we omitted to label some vertices for sake of clarity. For example, the middle vertex of the first row in diagram~\ref{diag:two_double_critval} corresponds to $f(a)=f(b); f(c) < f(d)$.

 \begin{equation}\label{diag:two_double_critval}
 \xymatrix{
f(a)<f(b); f(c) < f(d) \ar@/^1pc/[rr]^{a \leftrightarrow b} \ar@{-->}[r] \ar@{-->}[d] &  \ar@{-->}[d] &  f(a)>f(b); f(c) < f(d) \ar@/^1pc/[dd]^{c \leftrightarrow d} \ar@{-->}[l] \ar@{-->}[d] \\ 
\ar@{-->}[r] &  & \ar@{-->}[l]  \\ 
 f(a)<f(b); f(c) > f(d)\ar@/^1pc/[uu]^{c \leftrightarrow d} \ar@{-->}[r] \ar@{-->}[u]& \ar@{-->}[u] & f(a)>f(b); f(c) > f(d) \ar@/^1pc/[ll]^{a \leftrightarrow b}  \ar@{-->}[l] \ar@{-->}[u] .
 }\end{equation}

\vspace{.3cm}
\paragraph{\underline{$\Gcal_2^b \cap \Hcal_0$}}
Let $f_0$ be a function so that $f_0(a)=f_0(b)=f_0(c)$, for three distinct critical points. A nearby function $f$ can have six different behaviours, depending on the relative positions of $f(a)$, $f(b)$, and $f(c)$, and these are related by critical point switches, corresponding to the arrows in diagram~\ref{diag:triple_crit_value}. 
We aim to show it induces a commutative diagram at the quilted Floer homology level. First notice that after applying the FFT functor, each possible compositions are embedded$^+$, therefore each arrow in the diagram can be factored through an intermediate \glag, where one has composed the two correspondences involved in the corresponding twist. Now each of the six new \glag s can also be further composed to the Lagrangian correspondence associate with the three handle attachments. Therefore the hexagon of diagram~\ref{diag:triple_crit_value} can be split into six 4-gons, which by lemma~\ref{lem:commutativity_compo} part $(A)$ are commutative after taking quilted Floer homology. That implies that diagram~\ref{diag:triple_crit_value} commutes.
 \begin{equation}\label{diag:triple_crit_value}
 \xymatrix{  
 &   & f(b)<f(a)<f(c) \ar@/^1pc/[rrdd]^{a \leftrightarrow c} \ar@{-->}[rd]\ar@{-->}[ld] &  &  \\
 &  \ar@{-->}[rdd] &   &  \ar@{-->}[ldd] &  \\
f(a)<f(b)<f(c) \ar@/^1pc/[rruu]^{a \leftrightarrow b} \ar@{-->}[ru] \ar@{-->}[d] &   &   &  & f(b)<f(c)<f(a) \ar@/^1pc/[dd]^{b \leftrightarrow c} \ar@{-->}[d] \ar@{-->}[lu]  \ \\
\ar@{-->}[rr]&   &   &  & \ar@{-->}[ll]\\
f(a)<f(c)<f(b) \ar@/^1pc/[uu]^{b \leftrightarrow c}\ar@{-->}[u] \ar@{-->}[rd]  &   &   &  & f(c)<f(b)<f(a)\ar@/^1pc/[lldd]^{a \leftrightarrow b} \ar@{-->}[ld] \ar@{-->}[u]  \\
 &  \ar@{-->}[ruu] &   & \ar@{-->}[luu] &  \\
 &   & f(c)<f(a)<f(b)\ar@/^1pc/[lluu]^{a \leftrightarrow c} \ar@{-->}[ru] \ar@{-->}[lu]  &  &   .
}\end{equation}


\subsection{Mapping class group and fundamental group representations}\label{ssec:mcgrep}

Assume now we are given a diffeomorphism $\varphi\colon Y \to Y'$ mapping $z$ to $z'$. Let $W$ be the blow up of $Y$ at $z$, and pick a Cerf decomposition $f$ of it. Then $\varphi$ induces a Cerf decomposition of $Y'$,  call it $\varphi_*f$. To $(Y, \varphi^*P, z,f)$ is associated a \glag\ $\underline{L}$, and to $(Y', P, z',\varphi_*f)$ is associated another \glag\ $\underline{L}'$. 

The map $\varphi$ gives an identification of $\underline{L}$ and $\underline{L}'$: it furnishes a symplectomorphism between each symplectic manifolds appearing in the sequences, and these symplectomorphisms map the first Lagrangians to the second. This identification therefore provides an isomorphism $HF(\underline{L}) \to HF(\underline{L}')$, which we define to be  the map
\[
F_{\varphi}\colon HSI(Y,\varphi^* P,z) \to HSI(Y,P,z).
\]

In particular, when $P$ is trivial, one gets a representation of the mapping class group of $(Y,z)$ on $HSI(Y,z)$. More concretely, If a Cerf decomposition (or a Heegaard splitting) is fixed, giving a \glag\ $\underline{L}$, to get automorphisms of $HF(\underline{L})$ one then needs to relate the new decomposition $\varphi_*f$ with $f$ by a sequence of Cerf moves.

%

We now define  the action of the fundamental group.  Let $\gamma\colon [0,1]\to Y$ be a smooth embedded path based at $z$. Pick a vector field $X$ on $Y$ extending $\gamma
'(t)$ and being zero outside a tubular neighborhood of $\gamma$, and let $\varphi$ be its time one flow. Take then $F_\varphi\colon HSI(Y,\varphi^* P,z) \to HSI(Y,P,z)$ to be the map associated to $\gamma$. Moreover, there is a preferred homotopy between $\varphi^* P$ and $P$ given by $\varphi_t^* P$, with $\varphi_t$ the time $t$ flow of $X$, for $0\leq t\leq 1$. This homotopy identifies $ HSI(Y,\varphi^* P,z)$ with $HSI(Y,P,z)$ Composing  $F_\varphi$ with this identification, one then gets an automorphism of $HSI(Y,P,z)$. This automorphism is an isotopy invariant of $\gamma$, but it is not clear a priori that it is a homotopy invariant. This is also true, and will follow from the construction in section~\ref{sec:cob}, and the following observation: to a path  $\gamma\colon [0,1]\to Y$ corresponds the path $\tilde{\gamma} \colon t\mapsto (\gamma(t),t)$ in $Y\times [0,1]$, and a homotopy of $\gamma$ yields an isotopy of $\tilde{\gamma}$.

%

\subsection{Action of $H^1(Y; \Z{2})$ for Lens spaces}\label{ssec:action_H1}

Let $Y = L(p,q)$ be a lens space, with $p=2n$ even, and $P=SU(2)\times Y$ the trivial bundle. Let $\widehat{\Sigma}$ be a genus one Heegaard splitting, $z\in \widehat{\Sigma}$ a base point, and $\Sigma$ the real oriented blow-up. One can identify $\N(\Sigma, P_{\Sigma})$ with the set $\left\lbrace (A,B)| [A,B]\neq -1 \right\rbrace $, so that $L_0 = \left\lbrace (A,B)|B =1 \right\rbrace $ and  $L_1 = \left\lbrace (A,B)|A^p B^{-q} =1 \right\rbrace $. \MW\ showed \cite[Prop.~7.3]{MW} that $HSI(L(p,q)) = H_*(L_0 \cap L_1;\Z{2})$, and  $L_0 \cap L_1$, seen as a subset of $L_0\simeq SU(2)$, consists in the union of conjugacy classes $S_a$ of $e^{\frac{ia\pi}{p}}$, for $a= 0, \cdots n$. These are points when $a=0,n$, and two-spheres when $a= 1, \cdots n-1$.

$H^1(Y;\Z{2})\simeq \Z{2}$ acts on $\N(\Sigma, P_{\Sigma})$ by the involution $(A,B) \mapsto (-A,B)$. It acts on $L_0 \cap L_1$ by switching $S_a$ and $S_{n-a}$, and the action on $HSI$ corresponds to the induced action on homology.

\begin{remark}The groups $HSI(Y)$ and $\widehat{HF}(Y)$ have the same rank for all known examples, \MW\ asked whether this is always true. One could ask whether a similar action of $H^1(Y;\Z{2})\simeq \Z{2}$ exists on $\widehat{HF}(Y)$, and whether it is related to involutive Heegaard Floer homology, as defined by Hendricks and Manolescu \cite{HendricksManolescuInvolutive}.
\end{remark}

\section{Maps from cobordisms}\label{sec:cob}

From now on $W$ will no longer stand for a 3-dimensional cobordism as it used to stand in the last section, but rather for a 4-dimensional cobordism.

\subsection{General framework}
\label{ssec:bigpic}

So far we can associate  a map $F_\varphi$ to a  diffeomorphism $\varphi$ fixing the basepoint. We will see that $HSI$ carries a structure close to a (3+1)-TQFT, that is for each four-dimensional cobordism $W\colon Y \to Y'$ is associated a linear map between the groups associated to its boundary, and these maps satisfy nice gluing properties. The picture will be slightly different however: due to the presence of basepoints and $SO(3)$-bundles, the maps will be associated to cobordisms with arcs connecting these points, and $SO(3)$-bundles. 

Moreover, $HSI$ can also be constructed by defining a functor $Cob_{2+1} \to \Symp$, i.e. a $(2+1)$-Floer field theory (abbreviated FFT), see \cite[sec.~3]{surgery} and then applying quilted Floer homology. These two functorial  points of view shoud be brought together: in principle $HSI$ and the cobordism maps should come from a (slight variation of a) (2+1+1)-\fft: the category $\Symp$ should be endowed with a structure close to a 2-category, using quilted Floer homology as 2-morphism spaces, see \cite{WWfft}.  
We plan in future work \cite{ham} to extend such a functorial picture down to dimension 1.

Nevertheless we find instructive to have this point of view in mind when constructing the maps $F_W$, although our construction will eventually be analogous to \cite{OSholotri}. According to this principle, it suffices to assign a Floer homology class for elementary cells (namely the cells for 1-handles, 2-handles and 3-handles), seen as manifolds with corners (i.e. 2-morphisms in $Cob_{2+1+1}$), and then use the operations of the 2-category $\Symp$ (i.e. composition of 2-morphisms, using quilted pair of pants). We now explain how to choose these Floer homology classes. Let $X$ be a  4-manifold with corners with $\partial X = Y_1 \cup_{\Sigma} Y_2$ ($\Sigma$ is the codimension 2 corner). After equipping $X$ with a metric with cylindrical ends, and a principal bundle over it, one should be able to define a moduli space $\M_{ASD}(X)$ of anti-self dual instantons (with suitable behaviour near the basepoint). Fixing a temporal gauge and taking the limit near the boundary should give two restriction maps to the moduli spaces of flat connexions of $Y_1$ and $Y_2$, which also restricts to the extended moduli space of $\Sigma'$ ($\Sigma' = \Sigma\setminus disk$), as in the following diagram.

\[\xymatrix{ & \M(Y_1) \ar[rd] &   \\  \M_{ASD}(X) \ar[ru] \ar[rd]    &  & \N(\Sigma '). \\  & \M(Y_2) \ar[ru] &  }\] 

One then obtains a map $r\colon \M_{ASD}(X) \to L(Y_1) \cap L(Y_2)\subset \N(\Sigma ')$. Then the fundamental class (say it exists) $ \left[ \M_{ASD}(X)\right] $ pushes forward to a class  $ r_* \left[ \M_{ASD}(X)\right] $ in $H_*(L(Y_1) \cap L(Y_2))$, which (say the intersection is clean) is the first page of a spectral sequence of $E_\infty$-page $HF(L(Y_1), L(Y_2))$. If furthermore $ r_* \left[ \M_{ASD}(X)\right] $ descends to a class in $HF(L(Y_1), L(Y_2))$, then we might use this class.  It is likely that the lowest dimensional part of $\M_{ASD}(X)$ corresponds to the classes involved in the construction of the maps $F_{W,P}$ (the classes $C_+$ and $C$), and it should be possible to use its higher dimensional components in order to define $HSI$-valued Donaldson polynomials.

\subsection{Construction}
Recall the setting:  $W$ is a compact connected oriented smooth 4-cobordism from $Y$ to $Y'$ (both closed connected oriented 3-manifolds), $P$ an $SO(3)$-bundle over $W$, and $\gamma\colon [0,1]\to W$ a path from  $Y$ to $Y'$ connecting two basepoints $z$ and $z'$.

 
We proceed analogously to  \cite{OSholotri}: we first cut $W$ in elementary  cobordisms, corresponding to single handle  attachments, then define morphisms associated to such cobordisms, and finally check that the morphism obtained by composing those doesn't  depend on the decomposition.

\subsubsection{1-handle and  3-handle attachment}\label{ssec:1handle3handle}

Let  $W$ be a 4-cobordism between $Y$ and $Y'$ corresponding to a 1-handle  attachment  to $Y$. The manifold  $Y'$ is homeomorphic to the connected sum $(S^2\times S^1)\# Y$, and $W$ is homeomorphic to the  boundary connected sum   
\[
 W \simeq (D^3 \times S^1)\#_\partial Y \times [0,1].
 \] 
 
 Take an embedded $S^2\times [0,1] \subset W$ that separates $W$ into two pieces, one diffeomorphic to $(Y\setminus D^3) \times [0,1]$, the other to $D^3 \times S^1$. Assume that the base path of $W$ is entirely contained in $S^2\times [0,1]$ and is of the form $\gamma(t) = (z,t)$, for some $z\in S^2$.
Take a Cerf decomposition $\Sigma_2, \cdots$ of $Y$ (after blowing up at $z$). It induces a Cerf decomposition $\Sigma_0', \Sigma_1', \Sigma_2', \cdots$ of $Y'$, with $\Sigma_1'$ a (punctured) genus one Heegaard splitting of $S^2\times S^1$, $\Sigma_0$ and $\Sigma_2$ discs, and $\Sigma_k$ corresponding to $\Sigma_k'$.

Let $P$ be an $SO(3)$-bundle over $W$. Denote $\underline{L} = (L_{23},L_{34},\cdots )$ and $\underline{L}' = (L_{01},L_{12},L_{23},\cdots )$ the \glag\ associated respectively with $(Y, P_{|Y})$ and $(Y', P_{|Y'})$, with the given Cerf decompositions.

The restrictions of $P$ to $(Y\setminus D^3) \times \lbrace 0 \rbrace$ and  $(Y\setminus D^3) \times \lbrace 1 \rbrace$ are homotopic through the path of pullbacks of $P_{|(Y\setminus D^3) \times \lbrace t \rbrace}$, so one can identify $(L_{23},L_{34},\cdots )$ and $(L_{23}',L_{34}',\cdots )$ canonically. Moreover the restriction of $P$ to $(S^2\times S^1)\setminus B^3 \subset Y'$ is trivial, since it extends to $D^3 \times S^1$ (which has $H^2(D^3 \times S^1;\Z{2}) = 0$). It follows that, seen as subsets of $\N(\Sigma_1)$, we have $L_{01} = L_{12}$ (both $\N(\Sigma_0)$ and $\N(\Sigma_2)$ are points). By the Künneth formula \cite[Th~1.1]{surgery}, one has
\[
HSI(Y',P') = HSI(S^2\times S^1) \otimes HSI(Y,P)  .
\]
\begin{remark} From the genus one Heegaard splitting of $S^2\times S^1$, one can see that the action of $H^1(S^2\times S^1;\Z{2})$ on $HSI(S^2\times S^1)$ is trivial, therefore the previous formula is not ambiguous with respect to this action.
\end{remark}



In order to be able to refer to its classes, we fix the following absolute grading: $HSI(S^2\times S^1) = \Z{2}^{(3)} \oplus \Z{2}^{(0)}$, where the exponent stands for  the degree modulo 8. Under this identification, denote by $C_+\in \Z{2}^{(3)}$ the  non-trivial element of $\Z{2}^{(3)}$.


\begin{defi} 
\begin{itemize}
\item (Map associated to a 1-handle attachment). We define the map $F_{W, P,\gamma} (x)$ by:
\[F_{W, P,\gamma} (x) = C_+ \otimes x.\] 
\item (Map associated to a 3-handle attachment). The cobordism $\overline{W}$ endowed with the opposite orientation, seen as a  cobordism from $Y'$ to $Y$, corresponds to a 3-handle attachment. Let also $\overline{\gamma}(t) = \gamma(1-t)$. We similarly define $F_{\overline{W}, P, \overline{\gamma}} (C_+ \otimes x) =  x$, and $F_{\overline{W}, P, \overline{\gamma}} (C_- \otimes x) = 0$, if $C_-$ stands for  the   degree 0 generator  of $HSI(S^2\times S^1)$.
\end{itemize}

\end{defi}

\begin{remark} We will see that the definition of these handle maps are constrained by the  handle creation/cancellation invariance with the 2-handles.
\end{remark}

\subsubsection{2-handle attachment}

Let $Y$ be a 3-manifold, $K\subset Y$ a framed knot,  $W$ the cobordism corresponding  to the 2-handle attachment along $K$, and $P$ an $SO(3)$-bundle over $W$. Let $Y'$ be the second  boundary of $W$, which corresponds  to the zero surgery along $K$. Denote $T = \partial (Y\setminus \nu K)$ the torus bounding a fixed tubular neighborhood $\nu K$ of $K$.


Fix the basepoint $z$ of $Y$ in the torus $T$, let $z'\in Y'$ correspond to the same point, and $\gamma(t) = (z,t)$. Denote $\lambda, \mu \subset T$ a longitude and a meridian of $K$ avoiding the  point $z$, $T'$ the torus $T$ blown up at $z$, $\underline{L} = \underline{L}(Y\setminus K , c )$ the \glag\ from  $\Nc(T')$ to $pt$. Let also $L_0, L_1 \subset \Nc(T')$ correspond to the Dehn fillings along $\mu$ and $\lambda$ respectively, so that 
\begin{align*}
HSI(Y, P_{Y},z)  =&  HF(L_0, \underline{L}), \\
HSI(Y', P_{Y'},z')  =&  HF(L_1, \underline{L}).
\end{align*}

Since the union of the two solid torus corresponding to $L_0$ and $L_1$ is a 3-sphere, $L_0$ and $L_1$ intersect transversely at one point, and one has $HF(L_1,L_0) = HSI(S^3) = \Z{2}$. 
%
%
Let $C$ be the generator corresponding to the intersection point. 

Let  $\Phi\colon HF(L_1, L_0) \otimes HF(L_0, \underline{L}) \to HF(L_1, \underline{L})$ be the quilted pair-of-pant product, defined by  counting quilted pair-of-pants as in figure~\ref{fig:pairofpants}.

\begin{figure}[!h]
    \centering
    \def\svgwidth{.55\textwidth}
    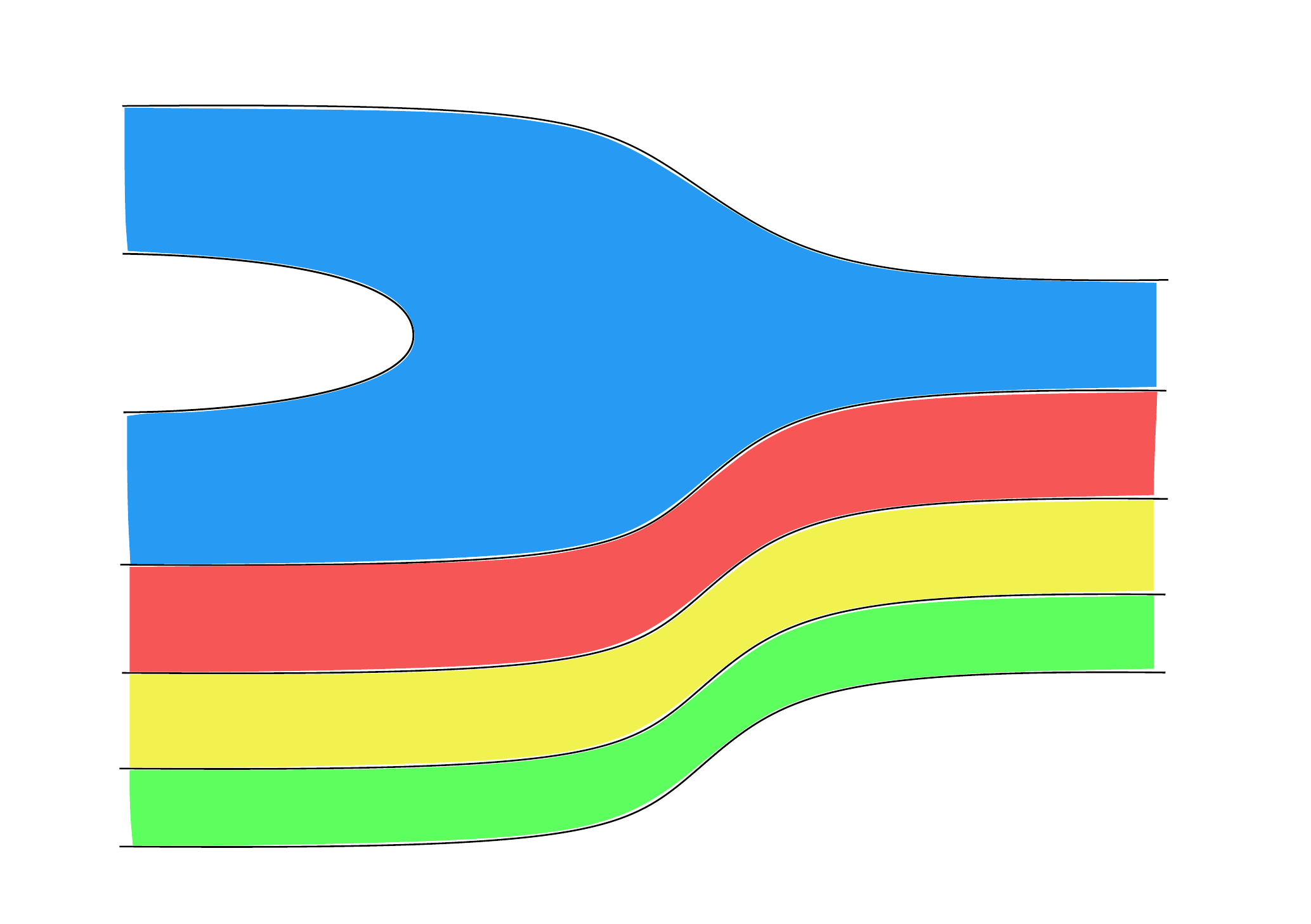
      \caption{The pair-of-pants map.}
      \label{fig:pairofpants}
\end{figure}

\begin{defi}(Map associated to a 2-handle attachment). We define $F_{W,P,\gamma}$ by 
\[
F_{W,P,\gamma} (x) = \Phi(C \otimes x).
\]
\end{defi}

\subsection{Independence of the decomposition, naturality}

Suppose that we have decomposed the cobordism $(W,\gamma)$ in $k$ elementary ones:
\[ (W,\gamma) = (W_1,\gamma_1) \cup_{Y_1, z_1} (W_2,\gamma_2) \cup_{Y_2, z_2} \cdots \cup_{Y_{k-1}, z_{k-1}} (W_k,\gamma_k),\]
with $Y_0 = Y$, $Y_k = Y'$, and $W_i$ a cobordism from $Y_{i-1}$ to $Y_i$ corresponding to the attachment of a 1,2 or 3-handle. Assume also that the path $\gamma$ is decomposed in paths $\gamma_i$ that are "horizontal" as in the previous section. 

Denote $P_i$ and $P_{|Y_i}$ the restrictions of $P$ to $W_i$ and $Y_i$ respectively. 
For each elementary piece  $(W_i,P_{i}, \gamma_i)$, we  defined a morphism 
\[
F_{W_i,P_i, \gamma_i} \colon HSI(Y_{i-1},P_{|Y_{i-1}},z_{i-1}) \to HSI(Y_i,P_{|Y_i},z_i). 
\]
 We then define $F_{W,P,\gamma}\colon HSI(Y,P_{|Y},z) \to HSI(Y',P_{|Y'},z')$ as the composition:
\[F_{W,P,\gamma} =F_{W_k,P_k,\gamma_k}\circ \cdots \circ   F_{W_2,P_2,\gamma_2}   \circ F_{W_1,P_1,\gamma_1}.\]

For this construction to make sense we first need to show that the handle maps $F_{W_i,P_i,\gamma_i}$ commute with the $Y$-maps, so that they really define the same map from $HSI(Y_{i-1},P_{|Y_{i-1}}, z_{i-1})$ to $HSI(Y_i,P_{|Y_{i}}, z_i)$ regardless of the Cerf decompositions of $Y_{i-1}$ and $Y_i$. We do so in section~\ref{ssec:commYmaps}.

We then prove that the map $F_{W, P, \gamma}$ doesn't depend on the Cerf decomposition of $W$. According to Cerf theory (see for example  \cite{WWfft}), it suffices to check that the morphisms remain unchanged after a handle creation/cancellation, and a critical point switch, as the moves corresponding to a diffeo-equivalence and a  trivial cobordism attachment are  clearly satisfied.

\subsubsection{Commutativity with the $Y$-maps} \label{ssec:commYmaps}

Let $W$ be a 4-cobordism corresponding to a handle attachment from $Y$ to $Y'$. After picking two Cerf decompositions of $Y$ and $Y'$ that are compatible with the handle attachment as in the previous construction, one gets two \glag s  $\underline{L}$ and  $\underline{L}'$, and a map 
\[F_{W,P}\colon HF(\underline{L}) \to HF(\underline{L}').\] 
If one had chosen different Cerf decompositions for $Y$ to $Y'$, one would have obtained two other \glag s $\widetilde{\underline{L}}$ and  $\widetilde{\underline{L}}'$, and another map 
\[
\widetilde{F}_{W,P}\colon HF(\widetilde{\underline{L}}) \to HF(\widetilde{\underline{L}}').
\]
 One also has two isomorphisms
 \begin{align*}
 \Theta &\colon HF(\underline{L}) \to HF(\widetilde{\underline{L}})\text{, and} \\
 \Theta' &\colon HF(\underline{L}') \to HF(\widetilde{\underline{L}}').
 \end{align*}

   We will show that the following diagram commutes:
\[ \xymatrix{ HF(\underline{L}) \ar[r]^{F_{W,P}} \ar[d]^{\Theta} & HF(\underline{L}')\ar[d]^{\Theta'}  \\ 
HF(\widetilde{\underline{L}}) \ar[r]^{\widetilde{F}_{W,P}} & HF(\widetilde{\underline{L}}') .}\]

Without loss of generality we will assume that $\Theta$ is a single $Y$-map.

\vspace{.3cm}
\paragraph{\underline{1-handles and 3-handles}} Let $(W, P, \gamma)$ be an $SO(3)$ 4-cobordism with a basepath, corresponding to a 1-handle attachment to $(Y,P_{|Y},z)$ as in section~\ref{ssec:1handle3handle}. 

Let $\underline{L}$ be a \glag\ obtained from some Cerf decomposition of $(Y,P_{|Y},z)$. Then $\underline{L}' = (L_0, (L_0)^T, \underline{L})$ corresponds to $(Y',P_{|Y'},z')$, and \[F_{W, P, \gamma} ([x]) = [C_+ \otimes x ]\text{, for }[x] \in HF(\underline{L}).\] 
Assume that $\widetilde{\underline{L}}$ is another \glag\ corresponding to another Cerf decomposition of $(Y,P_{|Y},z)$, that differs from  a Cerf move. $\underline{L}$ and $\widetilde{\underline{L}}$ differ by an embedded composition, and there is a $Y$-map  \[\Theta\colon HF(\underline{L}) \to HF(\widetilde{\underline{L}}).\]

Likewise, $\widetilde{\underline{L}}' = (L_0, (L_0)^T, \widetilde{\underline{L}})$ is the corresponding \glag\ for $(Y',P_{|Y'},z')$, and the $Y$-map  \[\Theta'\colon HF(\underline{L}') \to HF(\widetilde{\underline{L}}').\] 

One can "unsew" the second patch appearing in  $\Theta'$, by which we mean the following: if $(\underline{S}, \underline{M}, \underline{L})$ is a quilt, such that for some patch $P$ in $\underline{S}$, $M_P$ is a point, then "unsewing" corresponds to removing the patch $P$ in $\underline{S}$, and taking the same decorations. This operation doesn't change the moduli spaces and associated invariants.

Consequently, one has \[\Theta' = Id_{HF(L_0, L_0)}\otimes \Theta.\]
From the fact that $\widetilde{F}_{W,P,\gamma}\colon HF(\widetilde{\underline{L}}) \to HF(\widetilde{\underline{L}}')$ is defined by 
\[
\widetilde{F}_{W,P,\gamma}([x]) = [C_+ \otimes x],
\]
 it follows that $\widetilde{F}_{W,P,\gamma}\circ \Theta  =\Theta' \circ {F}_{W,P,\gamma} $, as announced. The claim for 3-handles follows from the same observation.

\vspace{.3cm}
\paragraph{\underline{2-handles}} Assume now that both $F_{W, P, \gamma}$ and $\widetilde{F}_{W, P, \gamma}$ correspond to a 2-handle attachment. By standard gluing arguments, the maps  $\widetilde{F}_{W,P,\gamma}\circ \Theta $ and $\Theta' \circ {F}_{W,P,\gamma} $ correspond to the contraction with $C\in HF(L_0,L_1)$ of the maps defined by counting quilts as in figure~\ref{fig:Ymaps2handles}. The equality  $\widetilde{F}_{W,P,\gamma}\circ \Theta  =\Theta' \circ {F}_{W,P,\gamma} $ then follows from the obvious homotopy relating the two surfaces.

\begin{figure}[!h]
    \centering
    \def\svgwidth{.65\textwidth}
    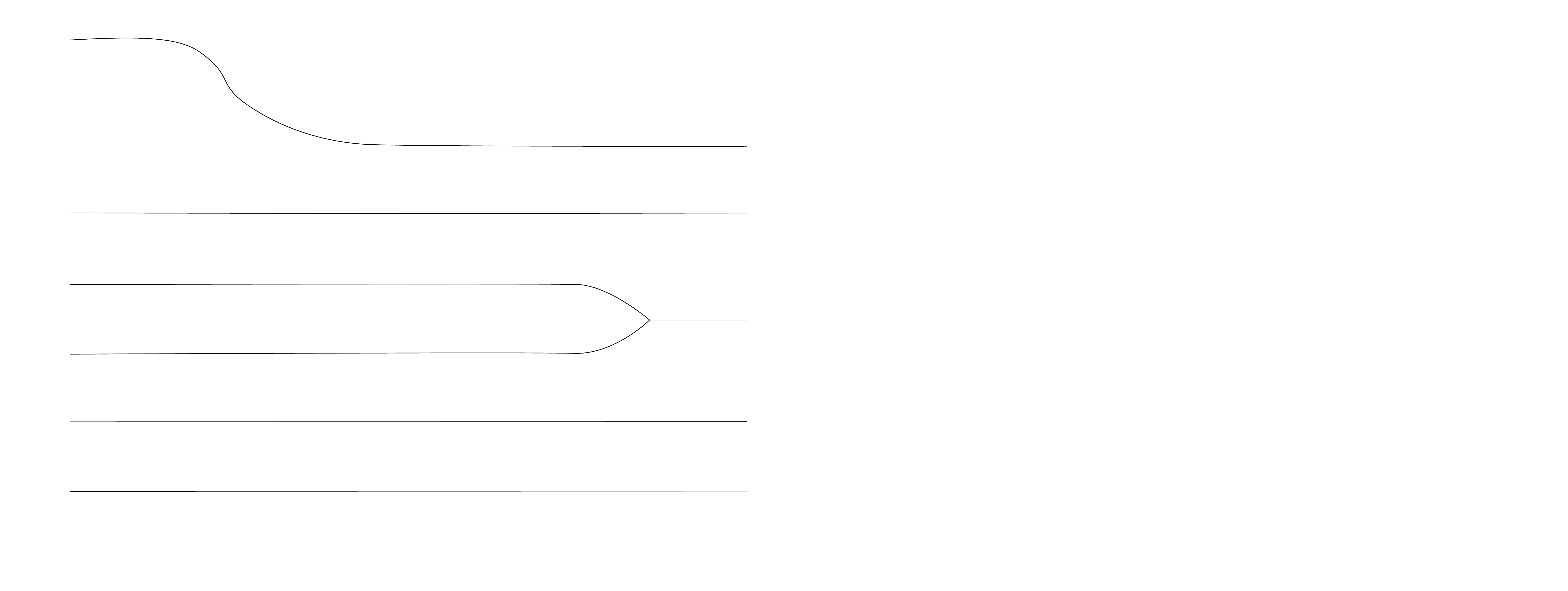
      \caption{quilted surfaces inducing the maps $\widetilde{F}_{W,P,\gamma}\circ \Theta $ and $\Theta' \circ {F}_{W,P,\gamma}$.}
      \label{fig:Ymaps2handles}
\end{figure}

\subsubsection{Birth/death} We will show that for the consecutive attachment  of a cancelling 1-handle and a 2-handle, the composition of the induced maps is the identity. The case of a 2-handle and a 3-handle can be obtained from this by  reversing the cobordism.

The situation we will describe is summarized in figure \ref{fig:naissancemort}: 

\begin{figure}[!h]
    \centering
    \def\svgwidth{.65\textwidth}
    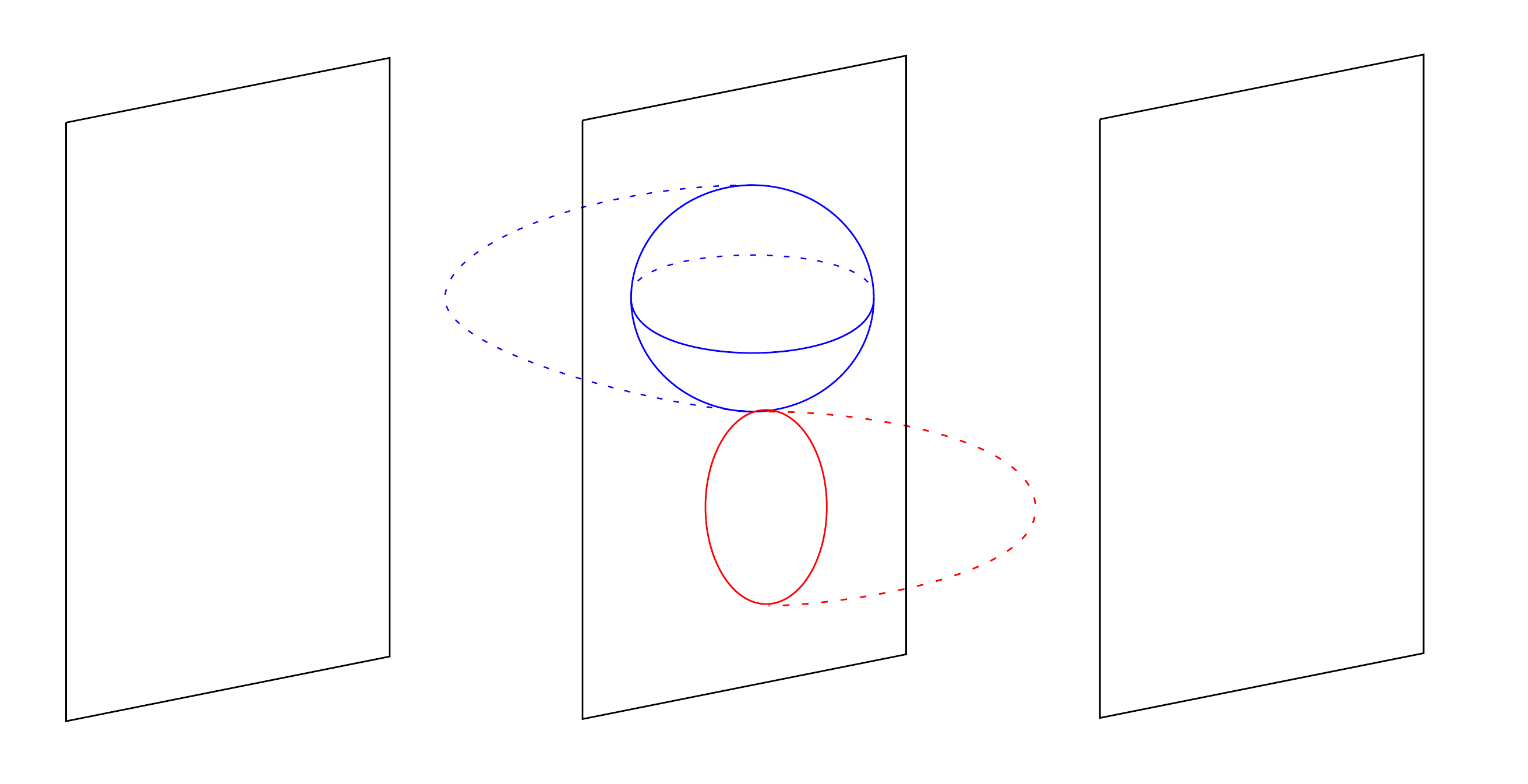
      \caption{Birth/Death.}
      \label{fig:naissancemort}
\end{figure}

Let $Y'$ be a 3-manifold, $S\subset Y'$ a 2-sphere and $K\subset Y'$ a framed knot, such that $K$ and $S$ intersect transversely at a single point $p$. Let $W_1$ denote the opposite of the cobordism corresponding to a 3-handle attachment along $S$. Denote $Y$ the other boundary of $W_1$. Let $W_2$ be the cobordism corresponding to the attachment of a 2-handle along $K$, with respect to the given framing. Let $Y''$ be the other boundary of $W_2$. Let $N$ be a regular  neighborhood of $S\cup K$ in $Y'$, $N\simeq (S^2\times S^1)\setminus B^3$, with  $B^3$ a 3-ball. The boundary $\partial N$ is a 2-sphere, $Y\simeq Y''\simeq (Y'\setminus N)\cup B^3$,  and $Y'\simeq Y\# (S^2\times S^1)$.

Let $C$ be a circle in $S$ disjoint from  $p = K\cap S$, and $T\subset N$ the torus corresponding to $C\times K$ under the identification $N\simeq (S\times K)\setminus B^3$. The torus $T$ separates $N$ in respectively a neighborhood $\nu K$ of $K$, and $N\setminus \nu K$. We will still denote $T$ the  corresponding torus in $Y''$.

Let $P$ be an $SO(3)$-bundle over $W$, denote $P_1$, $P_2$, $P_{|Y}$,  $P_{|Y'}$,  $P_{|Y''}$, its restrictions to ${W_1}$, ${W_2}$, ${Y}$, $Y''$ and ${Y'}$ respectively.



Choose basepoints $z,z',z''$ corresponding to a same point in $T$, and horizontal arcs $\gamma_1$, $\gamma_2$ connecting them. Let  $\lambda,\mu\subset T$ be a longitue and a meridian of $K$ avoiding $z$, and $T'$ the torus $T$ blown up at $z$. Denote, in accordance with the previous  paragraph, $L_0$ and $L_1$ the   two Lagrangians of $\Nc(T')$ associated respectively to $\nu K$ (or $N\setminus \nu K$),  and to the  Dehn filling of $Y''$. Recall that $L_0$ and $L_1$ intersect transversely at a single point. 

Let finally $\underline{L}  = \underline{L}(Y'\setminus N , c )$ be the \glag, going from  $pt$ to $pt$. The  $HSI$ homology groups of the  three manifolds are then given by: 

\begin{align*}
HSI(Y, P_{|Y},z) =& HF(  \underline{L})  \\
HSI(Y', P_{|Y'},z') =& HF(L_0, L_0,  \underline{L}) \simeq HSI(S^2\times S^1,0) \otimes HSI(Y, P_{|Y},z) \\
HSI(Y'', P_{|Y''},z'') =&  HF(L_1,L_0, \underline{L}) \simeq HSI(S^3,0) \otimes HSI(Y, P_{|Y},z)\\
 \simeq & HSI(Y, P_{|Y},z).
\end{align*}

By construction, if $[\underline{x}]\in HF(  \underline{L})$,  $F_{W_1,P_1}([\underline{x}]) =  C_+\otimes [\underline{x}]$, where $C_+$ is the generator in  degree 3. Therefore, the identity $F_{W_2,P_2 } \circ F_{W_1,P_1}= Id_{HSI(Y, P_{|Y})}$ follows from the following lemma:
\begin{lemma}$F_{W_2,P_2 }(C_+\otimes [\underline{x}]) = [\underline{x}]$. 
\end{lemma}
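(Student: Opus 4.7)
The strategy is to reduce, via the Künneth formula, to a purely local computation on $\Nc(T')$, and then recognize $C_+$ as the unit of a naturally occurring pair-of-pants ring.

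First, I would invoke the Künneth formula \cite[Th.~1.1]{surgery} to obtain
\begin{align*}
HF(L_0, L_0, \underline{L}) &\simeq HF(L_0, L_0) \otimes HF(\underline{L}), \\
HF(L_1, L_0, \underline{L}) &\simeq HF(L_1, L_0) \otimes HF(\underline{L}).
\end{align*}
The quilted pair-of-pants defining $F_{W_2,P_2}$ is cooked up so that only the patch carrying $\Nc(T')$ sees the generator $C$, while the seams corresponding to $\underline{L}$ are unchanged on the relevant boundaries. Consequently the product splits as
\[
F_{W_2,P_2} \;=\; \bigl(\Phi_T(C \otimes \cdot)\bigr) \otimes \mathrm{id}_{HF(\underline{L})},
\]
where $\Phi_T \colon HF(L_1, L_0) \otimes HF(L_0, L_0) \to HF(L_1, L_0)$ is the local pair-of-pants on $\Nc(T')$ with boundary conditions $L_0, L_0, L_1$. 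The lemma reduces to $\Phi_T(C \otimes C_+) = C$.

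Second, I would identify $C_+$ as the unit of the ring $HF(L_0, L_0)$. Following \cite[Sec.~7]{MW}, $L_0 \subset \Nc(T')$ is the locus $\{B=1\}$, parametrized by $A \in SU(2)$, hence $L_0 \simeq S^3$ is simply connected. Since $\pi_2(L_0) = 0$ and the monotonicity assumptions of Definition~\ref{def:sympcat} rule out pathological disk bubbling, a standard PSS/Morse--Bott argument yields a unital ring isomorphism
\[
H_*(L_0; \Z{2}) \;\xrightarrow{\sim}\; HF(L_0, L_0) \;=\; HSI(S^2 \times S^1),
\]
under which the fundamental class $[L_0] \in H_3(L_0)$ is sent to the unit. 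Matching degrees, $\Z{2}^{(0)} \oplus \Z{2}^{(3)}$ corresponds to $H_0(S^3) \oplus H_3(S^3)$, so the top generator $C_+$ is identified with $[L_0]$, and is therefore the unit of the pair-of-pants ring.

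Third, $\Phi_T$ endows $HF(L_1, L_0)$ with a right module structure over $HF(L_0, L_0)$, and the unit acts as the identity; thus $\Phi_T(C \otimes C_+) = C$, as required.

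The main obstacle is the identification of $C_+$ with the unit of $HF(L_0, L_0)$. This requires (i) setting up the PSS/Morse--Bott map in the monotone setting of $\Symp$, checking that the degeneracy locus $R \subset \Nc(T')$ does not interfere thanks to the monotonicity conditions and $\pi_2(L_0) = 0$, and (ii) matching the relative $\Z{8}$-grading on $HSI$ with the homological grading on $H_*(L_0) = H_*(S^3)$, so that the top generator $C_+$ lands in degree $3 = \dim L_0$. Both are essentially technical verifications in the framework already set up in the paper, and indeed the authors' absolute grading convention on $HSI(S^2 \times S^1)$ appears to have been chosen precisely so that (ii) holds.
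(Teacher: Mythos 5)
Your argument takes a genuinely different route from the paper's, and while the high-level idea is sound, it defers the main work to machinery that the paper deliberately avoids.

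The paper does not invoke the unitality of the pair-of-pants ring $HF(L_0,L_0)$, nor a PSS isomorphism. Instead, it Hamiltonian-perturbs the second copy of $L_0$ to $L_0' = \mathrm{graph}(df)$, where $f$ is a Morse function on $L_0 \simeq S^3$ with exactly two critical points $n$ (max) and $s$ (min), with $L_1$ arranged to be the fiber over $n$. It then identifies $HF(L_0,L_0')$ with the Morse complex of $f$ by an elementary index count (no index-$1$ strips exist because $|{\rm ind}(n) - {\rm ind}(s)| = 3$), so $C_+$ corresponds to $n$. The rest is a direct moduli-space computation: the only index-$0$ quilted triangles with corners $n, n, \underline{x}$ and output $(n,\underline{y})$ are constant, and their constancy and regularity are checked by hand (via \cite[Lemma~2.27]{Seidel} and \cite[Theorem~3.2]{WWerrata}), giving the matrix coefficient $1$ for $\underline{x} = \underline{y}$ and $0$ otherwise. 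This avoids ever having to set up a unit element or PSS map.

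Your Step 2 — that $C_+$ is the unit of $HF(L_0,L_0)$ — is where the burden sits, and you correctly flag it as ``the main obstacle,'' but I'd push back on calling it ``essentially technical.'' The ambient space $\Nc(T')$ is not honestly monotone symplectic: it carries a degenerate form $\tilde\omega$ with degeneracy locus $R$, and $\pi_2(L_0)=0$ does not by itself prevent disks in $\pi_2(\Nc(T'),L_0)$ touching $R$. Showing the PSS (or Morse--Bott/Biran--Cornea) map is a unital ring isomorphism in the category $\Symp$ requires exactly the sort of bubbling analysis involving $R$ that the paper sidesteps by working with a specific chain-level computation. Likewise, your Künneth splitting $F_{W_2,P_2} = \Phi_T(C\otimes\cdot)\otimes\mathrm{id}_{HF(\underline L)}$ is correct but is really the paper's ``unsewing'' observation restated, and deserves to be cited as such rather than asserted. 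So: the plan is plausible and arguably conceptually cleaner (the claim becomes ``units act as identities''), but as written it trades a short direct verification for a larger unproved black box; to make it self-contained you would essentially have to reproduce the paper's small-perturbation argument inside the proof that $C_+$ is the unit.
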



\begin{proof}

Recall that $F_{W_2,P_2 }$ is defined by counting quilted triangles  as in figure \ref{fig:pairofpants}. In the  present context, these triangles are equivalent to those of figure \ref{trianglebirthdeath}. Indeed, since the second symplectic manifold appearing in the sequence of Lagrangian correspondences is a point, one can  "unsew" the upper  triangle. 

\begin{figure}[!h]
    \centering
    \def\svgwidth{\textwidth}
    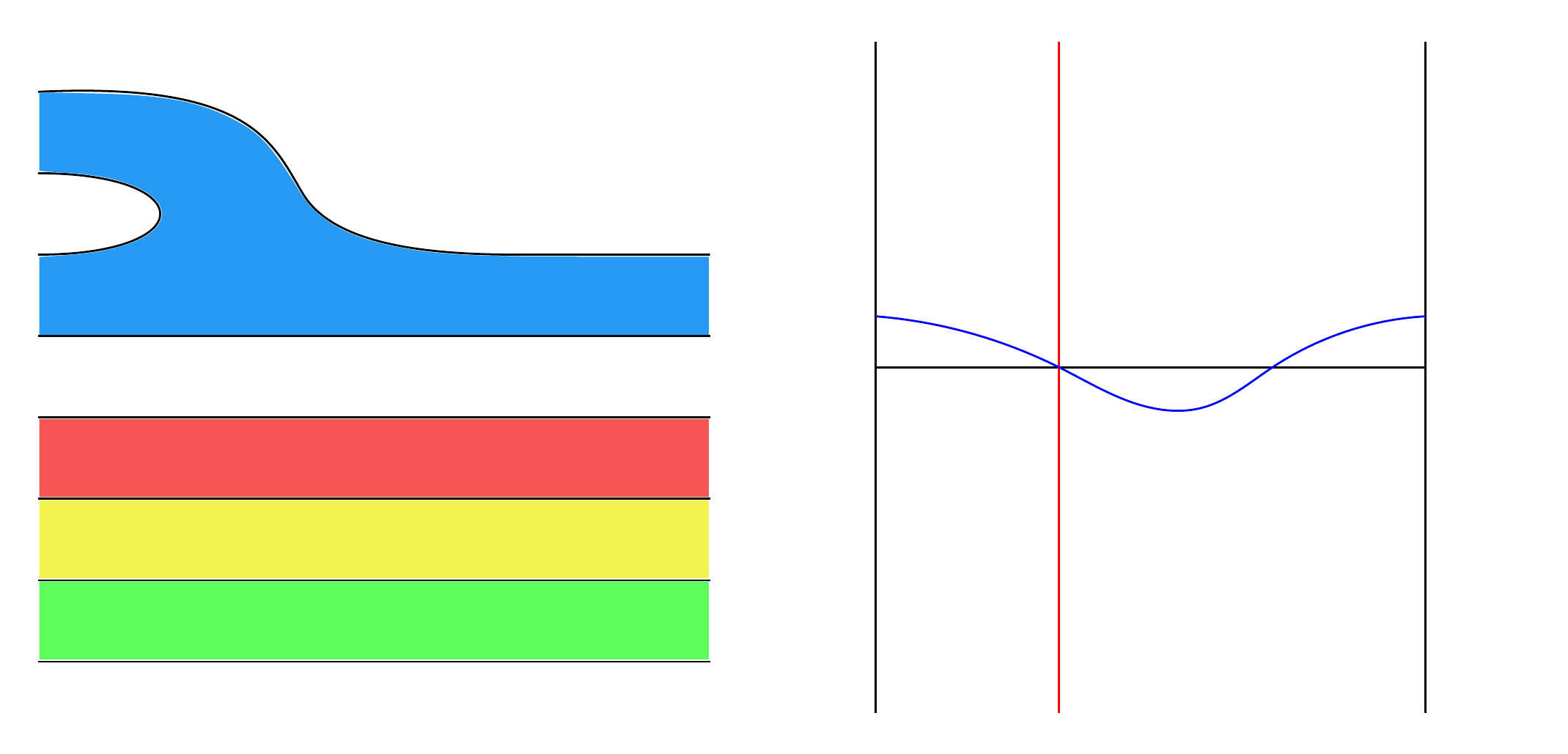
      \caption{Triangles appearing in the coefficients of $CF_{W_2,P_2 }$.}
      \label{trianglebirthdeath}
\end{figure}

Identify a tubular neighborhood  of $L_0$ in $\Nc(T')$ to a neighborhood  of the zero section in  $T^* L_0$, in such a way that $L_1$ corresponds to the fiber over a point $n\in L_0$.  Let $f\colon L_0\to \rr$ be a Morse function with  two critical points: a maximum at $n$ (for ``North'') and a minimum at some other point $s$ (for ``South''). Denote $L_0'$ the graph of $df$, and assume $f$ is small enough so that its graph is contained in the tubular neighborhood in question.

The homology $HF(L_0, L_0')$ is isomorphic to the Morse homology  of $f$, indeed there can be no Floer strips of index 1 due to the index difference of $n$ and $s$. The degree 3 generator $C_+$ thus  corresponds to the maximum $n$ of $f$ under this  identification.

Call $C\Phi\colon CF(L_1,(L_0)^T, L_0,(L_0')^T ,\underline{L} ) \to CF(L_1,(L_0')^T ,\underline{L} )$ the quilted pair-of-pant chain map, so that $CF_{W_2,P_2}( x_0, \underline{x} ) = C\Phi (n, x_0, \underline{x}) $. We shall show that $C\Phi (n,n,\underline{x}) = (n,\underline{x})$. As $L_1$ and $L_0'$ intersect transversely only at $n$, it suffices to show, for $\underline{x}, \underline{y}\in \I(\underline{L}) $,  that:

\begin{itemize}
\item if $\underline{x}\neq \underline{y}$, then $ \left( C\Phi (n,n,\underline{x}) , (n,\underline{y})\right) =0 $, and
\item if $\underline{x}= \underline{y}$, then $ \left( C\Phi (n,n,\underline{x}) , (n,\underline{y})\right) =1 $. 
\end{itemize}

Assume that there  exists a pseudo-holomorphic triangle  contributing to this coefficient. The  total index of the triangle  and the quilted strip  is zero: it follows by genericity that both have index zero (otherwise one would have negative index and would lie in a moduli space of negative dimension). Therefore the quilted strip is constant and  $\underline{x}= \underline{y}$, which proves the first point.

Assume now that $\underline{x}= \underline{y}$. The constant quilt has index zero, and is regular: indeed, for generic  perturbations,  The linearized Cauchy-Riemann  operator  associated to the  triangle is injective by  \cite[Lemma 2.27]{Seidel}, hence surjective since  the triangle has index zero. Similarly, the linearized Cauchy-Riemann  operator  associated to the quilted strip is injective, by \cite[Theorem 3.2]{WWerrata}. Any other quilt appearing in the moduli space should have same index, and thus same symplectic area by monotonicity, therefore it should be constant: the moduli space of such quilts consists  in a single point, which implies $ \left( C\Phi (n,n,\underline{x}) , (n,\underline{y})\right) =1 $.

To sum up, we have shown that  $F_{W_2,P_2 }(C_+\otimes [\underline{x}]) = [\underline{x}]$, as announced.
\end{proof}

\subsubsection{Critical point switch}

Let $Y$ be a 3-manifold. Suppose that $W_1$, a cobordism from $Y$ to $Y_1$,  corresponding to the attachment of a handle $h_1$ to $Y$, and that $W_2$, a cobordism from $Y_1$ to $Y_{12}$, correspond to the attachment of another handle $h_2$ to $Y_1$ disjoint from $h_1$. Suppose that $W_2'$ and $W_1'$ correspond  to the attachment in the opposite order, so that $W_1\cup_{Y_1} W_2 = W_2' \cup_{Y_2} W_1'$. Denote $Y_2$ the 3-manifold between  $W_2'$ and $ W_1'$, and $Y_{21}$ the other boundary of $W_1'$, as summarized in figure \ref{interversionptcrit}. The cobordism $W_1\cup_{Y_1} W_2 = W_2' \cup_{Y_2} W_1'$ is equipped with an $SO(3)$-bundle $P$, which restricts to each 4-cobordism and 3-manifold appearing, but which we will drop from the notations since there is no ambiguity.

 We will show that $F_{W_2} \circ F_{W_1} = F_{W_1'} \circ F_{W_2'}$. Four cases need to be distinguished depending on the handle dimensions: 
\begin{enumerate}
\item $h_1$ is a 1-handle and $h_2$ is a 1-handle.
\item $h_1$ is a 1-handle and $h_2$ is a 2-handle.
\item $h_1$ is a 2-handle and $h_2$ is a 2-handle.
\item $h_1$ is a 1-handle and $h_2$ is a 3-handle.
\end{enumerate}
The last remaining case (2-handle/3-handle) can be deduced from the  second one.

\begin{figure}[!h]
    \centering
    \def\svgwidth{.65\textwidth}
   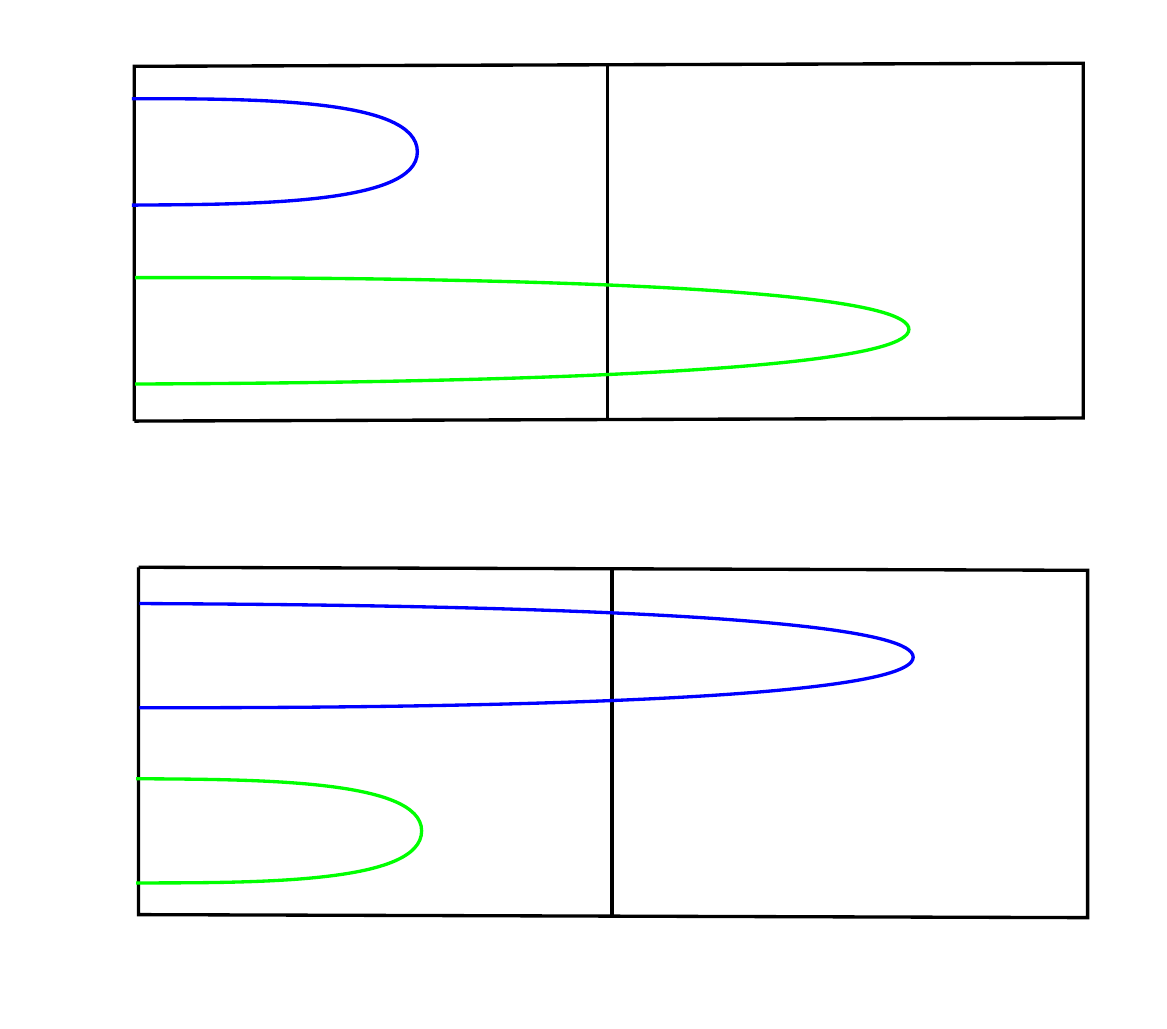
      \caption{Critical point switch.}
      \label{interversionptcrit}
\end{figure}

\vspace{.3cm}
\paragraph{\underline{First case ($h_1$ is a 1-handle and $h_2$ is a 1-handle)}}   Let $A_1$ and $A_2$ be two manifolds homeomorphic to $S^2\times S^1$, so that, with  $i=1$ or 2 and $j= 3-i$, $Y_i =  A_i\#Y$ and  $Y_{ij} = A_j\#(A_i\#Y  ) $. Let also $H_Y = HSI(Y)$ and $H_i = HSI(A_i)$, so that, according to the Künneth formula, 
\begin{align*}
HSI(Y_i) &=H_i \otimes H_Y \text{, and}\\
HSI(Y_{ij}) &= H_j\otimes H_i \otimes H_Y.
\end{align*}
Let finally $C_{+,i} \in H_i$ be the degree 3 generators. Then the composition maps 
\begin{align*}   F_{W_2}\circ F_{W_1} &\colon H_Y \to H_Y\otimes H_1 \otimes H_2,\text{ and}  \\
 F_{W_1}'\circ F_{W_2}' &\colon H_Y \to H_Y\otimes H_2 \otimes H_1
\end{align*}
are given, for $x\in H_Y$,  by: 
\begin{align*}
 F_{W_2}\circ F_{W_1}(x) =& C_{+,2}\otimes C_{+,1} \otimes x \\
 F_{W_1}'\circ F_{W_2}'(x) =& C_{+,1}\otimes C_{+,2} \otimes x .
\end{align*}
These are thus  identified via the isomorphism $HSI(Y_{12}) \simeq HSI(Y_{21}) $ induced by the identity.

\vspace{.3cm}
\paragraph{\underline{Second case ($h_1$ is a 1-handle and $h_2$ is a 2-handle)}} Keeping the previous notations, $HSI(Y_1) = H_1\otimes HSI(Y) $ and $HSI(Y_{21}) =  H_1 \otimes HSI(Y_2)$.

The claim  follows from the fact that $F_{W_2} = Id_{H_1}\otimes F_{W_2'}  $. Indeed, the last strip of the quilted surface  defining $F_{W_2}$ can be unsewed, see figure \ref{switch1handle2handle}.

\begin{figure}[!h]
    \centering
    \def\svgwidth{.45\textwidth}
    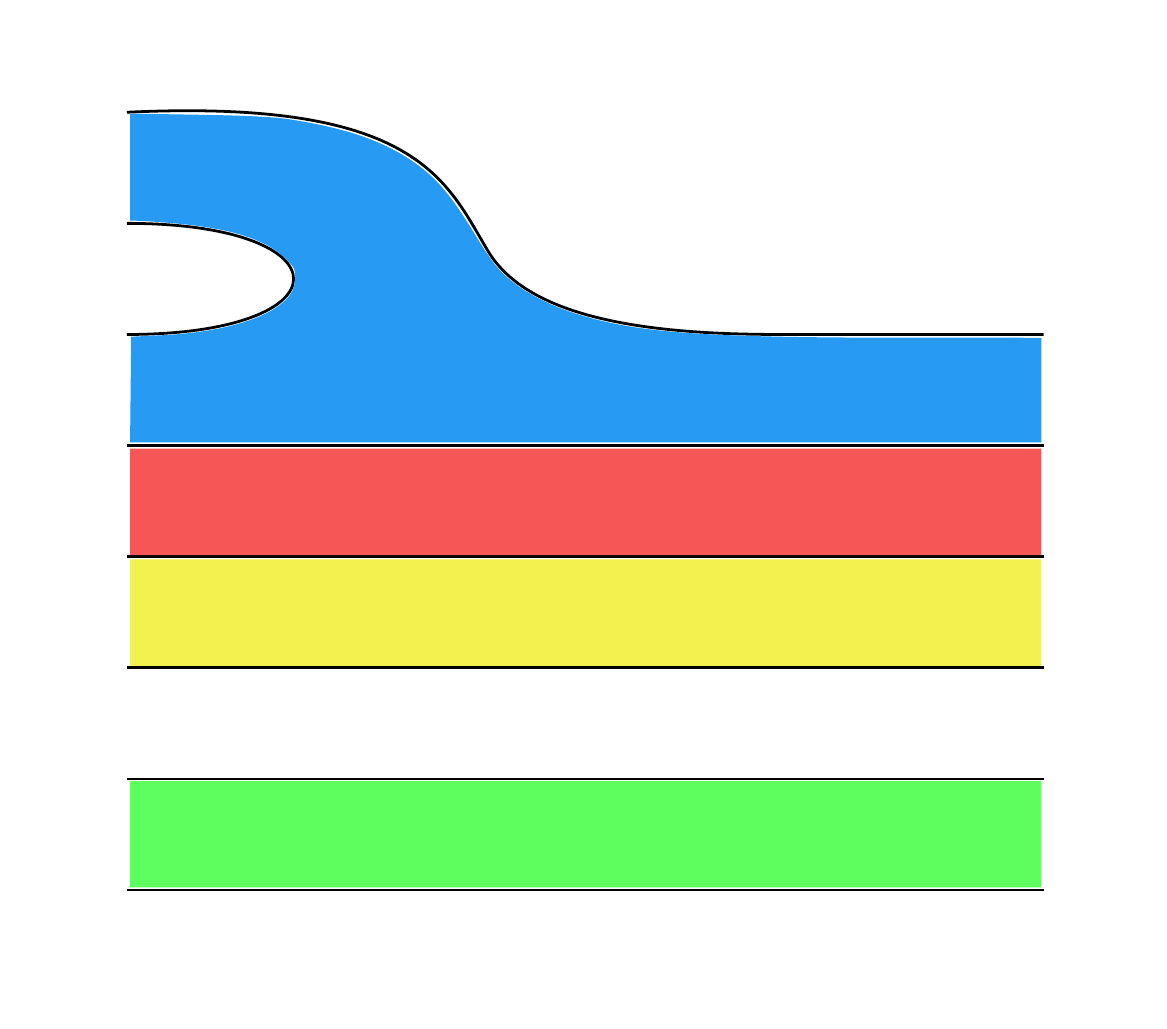
      \caption{1-handle and 2-handle switch.}
      \label{switch1handle2handle}
\end{figure}

\vspace{.3cm}
\paragraph{\underline{Third  case ($h_1$ is a 2-handle and $h_2$ is a 2-handle)}}

 Let  $K_1, K_2 \subset Y$ be two framed knots, to which we attach  the handles $h_1$ and $h_2$ respectively. Let  $N_1$ and $N_2$ be  tubular neighborhoods, $T_1$ and $T_2$ their boundaries, $M = Y\setminus \left( K_1 \cup K_2 \right)$ their complement, seen as a cobordism from $T_1$ to $T_2$. 

Denote by $\underline{L}$ the \glag\ from $\Nc(T_1)$ to $\Nc(T_2)$ associated to $M$. Let $L_1\subset\Nc(T_1)$ and $L_2 \subset\Nc(T_2)$ be the Lagrangians corresponding to $N_1$ and $N_2$. Let finally $L_1'\subset\Nc(T_1)$ and $L_2' \subset\Nc(T_2)$ be the Lagrangians corresponding to the  two Dehn fillings, so that we have:
\begin{align*}
HSI(Y) &= HF(L_1, \underline{L}, L_2), \\
HSI(Y_1) &= HF(L_1', \underline{L}, L_2), \\
HSI(Y_2) &= HF(L_1, \underline{L}, L_2'), \\
HSI(Y_{12}) &= HF(L_1', \underline{L}, L_2').
\end{align*}
The maps associated to $W_1$, $W_2$, $W_1'$ and $W_2'$ are then defined by counting quilted triangles  as in figure \ref{switch2handles}. This is by definition for $F_{W_1}$ and $F_{W_1'}$, and follows from the two following observations for  $F_{W_2}$ and $F_{W_2'}$. On the one hand, an oriented manifold  $Y$ described by a handle decomposition 
\[ Y = Y_1\cup_{\Sigma_1} Y_2 \cup_{\Sigma_2} \cdots \cup_{\Sigma_{k-1}} Y_k \]
can also be described by the reverse  decomposition 
\[ Y = Y_k\cup_{\overline{\Sigma}_{k-1}}\cdots \cup_{\overline{\Sigma}_2} Y_2 \cup_{\overline{\Sigma}_1} Y_1  ,\]
where the surfaces have their opposite orientations. Their corresponding moduli spaces  are then endowed with the opposite symplectic form. On the other hand, a pseudo-holomorphic quilt with values in a family of symplectic manifolds corresponds to its mirror image  with values in the family of symplectic manifolds with opposite symplectic forms. 

The compositions $F_{W_2}\circ F_{W_1}$ and $F_{W_1'}\circ F_{W_2'}$ are hence associated to the corresponding glued surfaces: they thus coincide, due to the homotopy suggested in figure \ref{switch2handles}.

\begin{figure}[!h]
    \centering
    \def\svgwidth{\textwidth}
    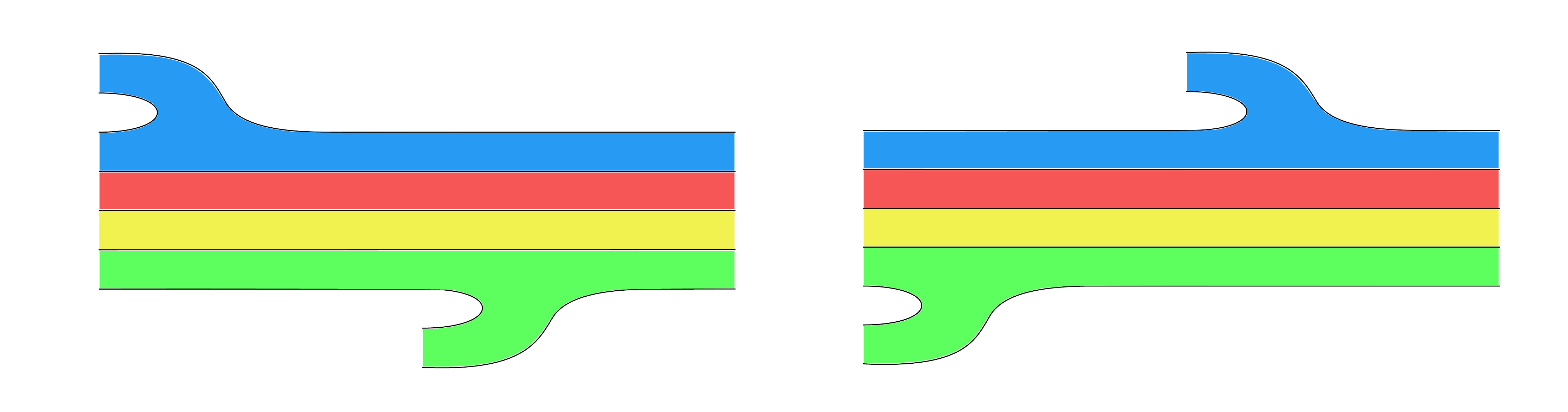
      \caption{2-handle switch.}
      \label{switch2handles}
\end{figure}

\vspace{.3cm}
\paragraph{\underline{Fourth case ($h_1$ is a 1-handle and $h_2$ is a 3-handle)}} As in the first case, let $A_1$ and $A_2$ both be manifolds diffeomorphic to $S^2\times S^1$, so to have 
\begin{align*}
 Y &\simeq A_2\#Y_2, \\
 Y_1 &\simeq A_1 \# Y\simeq A_1\# A_2\# Y_2, \\
 Y_{12} &\simeq Y_{21}\simeq A_1 \# Y_2 .
\end{align*}
Denote $H_{Y_2} = HSI(Y_2)$, $H_i = HSI(A_i)$, and let $C_{+,i}, C_{-,i} \in H_i$  be the generators  with  degrees  3 and 0 respectively.

We now check that $F_{W_2}\circ F_{W_1} = F_{W'_1}\circ F_{W'_2}$: let $x\in H_{Y_2}$, one has on the one hand: 
\begin{align*} F_{W_2}\circ F_{W_1} (C_{+,2}\otimes x) &= F_{W_2} (C_{+,1} \otimes C_{+,2}\otimes x) \\
&= C_{+,1} \otimes x \\
&=F_{W'_1}( x ) \\
&=F_{W'_1}\circ F_{W'_2}(C_{+,2} \otimes x ),
\end{align*}
and on the other hand:
\[F_{W_2}\circ F_{W_1} (C_{-,2}\otimes x) = F_{W'_1}\circ F_{W'_2}(C_{-,2}\otimes x) = 0 ,\]
which completes the proof of the independence of the decomposition.
\arnaque

Hence the cobordism maps $F_{W,P,\gamma}$ are well-defined. The following composition formula is a straightforward consequence of their construction (and  corresponds a vertical composition of 2-morphisms in the 2-category  $\textbf{Cob}_{2+1+1}$):

\begin{prop}\label{prop:compovertic}(Vertical composition) Let $(W,\gamma) = (W_1,\gamma_1) \cup (W_2,\gamma_2)$ be a composition of two cobordisms, $P$ an $SO(3)$-bundle over $W$, restricting respectively to $P_1$ and $P_2$ on  $W_1$ and $W_2$. Then,
\[ F_{W,P,\gamma} = F_{W_2,P_2,\gamma_2}\circ F_{W_1,P_1,\gamma_1}. \]
\end{prop}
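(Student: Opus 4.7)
The plan is to unpack the definition of $F_{W,P,\gamma}$: the cobordism map is defined as the composition of elementary handle maps along any chosen Cerf decomposition of $W$, and the statement will follow by noting that a Cerf decomposition of $W$ can be assembled from Cerf decompositions of $W_1$ and $W_2$ meeting at the intermediate slice.

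Concretely, first I would arrange the basepath so that $\gamma_1$ and $\gamma_2$ meet at a common basepoint $z_{\mathrm{mid}}$ on the middle 3-manifold $Y_{\mathrm{mid}} = W_1 \cap W_2$, and are each horizontal in the sense required for the handle constructions; this is unrestrictive by the isotopy invariance of the basepath established at the end of subsection~\ref{ssec:mcgrep}. Then I would pick Cerf decompositions
\[
\underline{W}_1 = (W_1^{(1)}, \dots, W_1^{(n_1)}), \qquad \underline{W}_2 = (W_2^{(1)}, \dots, W_2^{(n_2)}),
\]
of $W_1$ and $W_2$, each built over Cerf decompositions of its 3-dimensional boundaries (blown up at the basepoints). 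Both decompositions induce a Cerf decomposition of $Y_{\mathrm{mid}}$ at the gluing slice; these two may differ, but by Theorem~\ref{th:naturality} and the commutativity with $Y$-maps proven in subsection~\ref{ssec:commYmaps} the identification between the resulting $HSI(Y_{\mathrm{mid}}, P|_{Y_{\mathrm{mid}}}, z_{\mathrm{mid}})$'s is canonical and commutes with the elementary handle maps, so one may without loss of generality assume that the two induced decompositions coincide.

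The concatenation $(W_1^{(1)}, \dots, W_1^{(n_1)}, W_2^{(1)}, \dots, W_2^{(n_2)})$ is then a Cerf decomposition of $W$ compatible with $\gamma$. Applying the definition of the cobordism map to this decomposition and regrouping gives
\[
F_{W,P,\gamma} = \bigl(F_{W_2^{(n_2)}} \circ \cdots \circ F_{W_2^{(1)}}\bigr) \circ \bigl(F_{W_1^{(n_1)}} \circ \cdots \circ F_{W_1^{(1)}}\bigr) = F_{W_2,P_2,\gamma_2} \circ F_{W_1,P_1,\gamma_1},
\]
where the middle equality is just the definition of $F_{W_i,P_i,\gamma_i}$ for $i=1,2$. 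The independence of $F_{W,P,\gamma}$ on the Cerf decomposition, established in the previous subsection through the birth/death and critical point switch checks, guarantees that this equals the cobordism map as defined.

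The main (minor) obstacle is purely bookkeeping: one must verify that the basepath can be put into horizontal position across $Y_{\mathrm{mid}}$ and that the two induced Cerf decompositions of $Y_{\mathrm{mid}}$ can be related by Cerf moves without disturbing either $F_{W_1,P_1,\gamma_1}$ or $F_{W_2,P_2,\gamma_2}$. Both points are consequences of work already done: isotopy invariance of the basepath, naturality of the $HSI$ groups under Cerf moves, and the commutativity of handle maps with $Y$-maps. Once these are in place, the proposition is formal from the definition together with associativity of composition.
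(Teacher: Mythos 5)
Your proof is correct and matches the paper's intent: the paper simply records that the proposition is a straightforward consequence of the handle-by-handle construction and gives no further argument, and your proposal supplies precisely the expected unpacking (concatenate Cerf decompositions of $W_1$ and $W_2$, reconcile the two induced decompositions of the intermediate slice via naturality and commutativity with the $Y$-maps, then read off the factorization of the composite). The only small simplification you could make is to observe that once subsection~\ref{ssec:commYmaps} establishes that each elementary handle map is well-defined between the canonical $HSI$ groups, there is no longer any choice of decomposition of $Y_{\mathrm{mid}}$ to reconcile; the factorization is then immediate from the definition and associativity.
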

\arnaque

\subsection{Maps in the surgery exact triangle}\label{flechestri}

In this section we prove that the mod 2  analogues  of two of the maps in the  exact sequence of \cite[Theorem~1.3]{surgery} can be interpreted as maps induced by cobordisms.

 Let $Y$ be a 3-manifold with torus boundary, $\alpha, \beta,\gamma$ three curves in $T = \partial Y$ satisfying $ \alpha. \beta = \beta.\gamma = \gamma.\alpha = -1$,  and $Y_\alpha$,  $Y_\beta$, $Y_\gamma$ the manifolds obtained by  Dehn filling. If $\delta \neq \mu \in \lbrace \alpha, \beta, \gamma\rbrace$, let $W_{\delta, \mu }$ be the cobordism from $Y_\delta$ to $Y_\mu$ corresponding to the attachment of a 2-handle along $\delta$ with framing $\mu$.
 
 We think of $W_{\delta, \mu }$ as a union  $W_{\delta, \mu } = Y\times [0,1] \cup_{T\times[0,1]} X_{\delta, \mu } $, where $X_{\delta, \mu } = D^2\times D^2$, where we embed $T\times[0,1]$ as a thinckening of the corner $\partial D^2\times \partial D^2$, in such a way that $\delta \times \lbrace 0 \rbrace \subset T\times[0,1] $ and $\mu \times \lbrace 1 \rbrace \subset T\times[0,1] $ bound a disc in $\partial X_{\delta, \mu } \setminus ( T\times[0,1])$. Fix also a basepoint $z$ in $T$, and let all the base paths be $\lbrace z\rbrace \times [0,1]$.

Fix an $SO(3)$-bundle $P_Y$ over $Y$, together with a trivialization of $P_Y$ on $\partial Y$. $P_Y\times [0,1]$ is then trivialized over $T\times[0,1]$. Let $P_{X_{\delta, \mu }}$  denote the trivial $SO(3)$-bundle over $X_{\delta, \mu }$. We will glue it to $P_Y\times [0,1]$ along $T\times [0,1]$ using a transition function $\tau_{\delta, \mu }\colon T\times [0,1] \to SO(3)$:
\begin{itemize}
\item define $\tau_{\beta, \gamma }$ as constantly equal to $1$.
\item in order to define $\tau_{\alpha,\beta}$, identify $T$ with $\alpha \times \beta \simeq (\rr/ \zz)^2$, and let $a$, $b$ denote coordinates on  $\alpha$ and $\beta$ respectively. thinking of $SO(3)$ as $SU(2)/\pm 1$, and $SU(2)$ as unit
quaternions, define  $\tau_{\alpha,\beta}(a,b,s) = \left[ e^{i\pi a}\right] $.
\item define $\tau_{\gamma, \alpha}$ in a similar way: this time identify $T$ with $\gamma\times \alpha$, with coordinates $c$ and $a$, and set  $\tau_{\gamma, \alpha}(c,a,s) = \left[ e^{i\pi a}\right] $.
\end{itemize}
Denote then $P_{\delta, \mu }$ the bundle over $X_{\delta, \mu }$ obtained, and $P_\alpha$, $P_\beta$, $P_\gamma$, the three bundles over  $Y_\alpha$,  $Y_\beta$ and $Y_\gamma$ respectively, so that $P_{\delta, \mu }$ is an $SO(3)$-cobordism from $P_{\delta }$ to $P_{\mu }$.

%
%
%
%

In this setting, \cite[Theorem~1.3]{surgery} says that the three groups 
\[
HSI(Y_\alpha ,P_\alpha),\ HSI(Y_\beta,P_\beta)\text{ and }HSI(Y_\gamma,P_\gamma)
\]
fit into a long exact sequence. The proof of this theorem involves (an adaptation to the $HSI$ setting of) an exact sequence of \WW, which is a quilted analog of Seidel's exact sequence for symplectic Dehn twists. It is built from a  sequence (where CSI stands for the chain complex defining HSI):

\[\xymatrix{  CSI(Y_\alpha ,P_\alpha)  \ar[r]^{C\Phi_1}&  CSI(Y_\beta,P_\beta) \ar[r]^{C\Phi_2}&  CSI(Y_\gamma,P_\gamma)  ,
}\]
where the maps $C\Phi_1$ and $C\Phi_2$ are defined respectively by counting quilted pair of pants, and quilted sections of a Lefschetz fibration.

We briefly describe the quilted Lefschetz fibration involved  in the definition of $C\Phi_2$, and refer to \cite[Sec.~5.1.5]{surgery} for more details. Let $\underline{L} = ( L_{01}, \cdots, L_k)$ stand for the \glag\ from $M_0$ to $pt$ associated with a Cerf decomposition of $Y$. $L_0$ and $S$ are the two Lagrangian 3-spheres of $M_0$ corresponding respectively to $P_\gamma$ and $P_\alpha$, and denoting $\tau_S$ the generalized Dehn twist around $S$, $\tau_S L_0$ then corresponds to $P_\beta$. The base of the quilted Lefschetz fibration is a quilted strip as in figure~\ref{fig:PHI_2}, where each patch $P_1$, $P_2$, $\cdots$ except the first $P_0$, is decorated with a trivial fibration $P_i\times M_i$. The first patch $P_0$ (in blue in figure~\ref{fig:PHI_2}) is decorated with a Lefschetz fibration with one critical value at the cross, whose generic fiber is $M_0$, and whose vanishing cycle is the Lagrangian sphere $S$.
The holonomy around this critical value corresponds to the generalized Dehn twist around $S$. As in \cite{Seidel}, we trivialize this Lefschetz fibration on the complement of the vertical slit joining the critical value and the top boundary component. The Lagrangian boundary and seam conditions are then understood in this trivialization.

\begin{figure}[!h]
    \centering
    \def\svgwidth{.7\textwidth}
    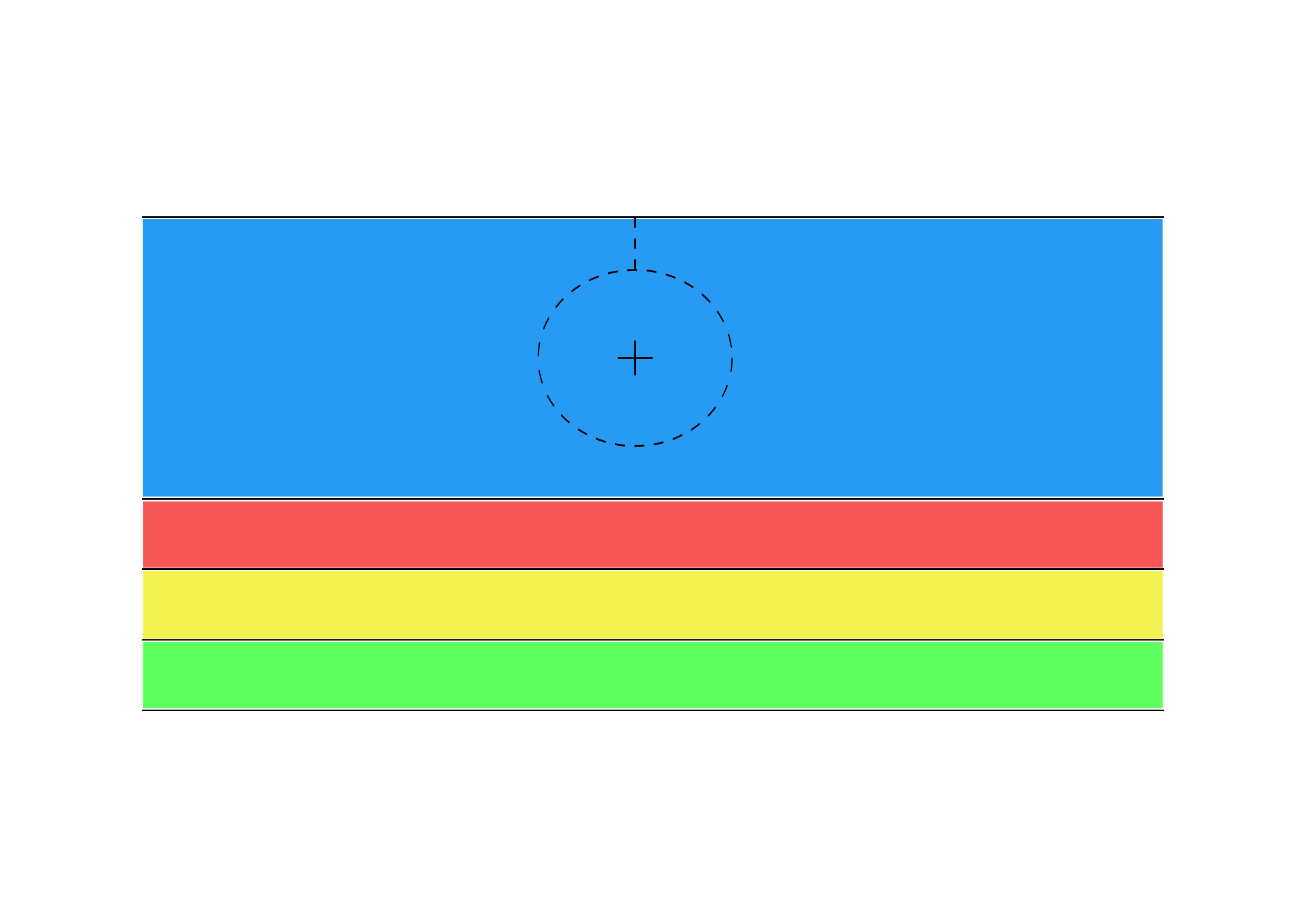
      \caption{Quilted Lefschetz fibration defining $C\Phi_2$.}
      \label{fig:PHI_2}
\end{figure}

\begin{theo}\label{thinterpfleches} 
The morphisms $\Phi_1$ and $\Phi_2$ induced in  homology with  $\Z{2}$-coefficients  by $C\Phi_1$ and $C\Phi_2$ coincide with  the maps $F_{W_{\alpha\beta}, P_{\alpha\beta}}$ and $F_{W_{\beta\gamma}, P_{\beta\gamma} }$.
\end{theo}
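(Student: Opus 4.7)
The plan is to identify both $\Phi_1$ and $\Phi_2$, at the chain level (up to chain homotopy), with the pair-of-pants maps $CF_{W_{\alpha\beta},P_{\alpha\beta}}$ and $CF_{W_{\beta\gamma},P_{\beta\gamma}}$ respectively. To set things up, fix a Cerf decomposition of $Y$ in which the torus $T=\partial\nu K$ appears as a level surface, and let $\underline{L}$ be the corresponding \glag\ from $\Nc(T')$ to $\mathrm{pt}$ associated with $Y\setminus\nu K$. The three Dehn fillings give Lagrangian $3$-spheres $L_\alpha,L_\beta,L_\gamma\subset\Nc(T')$ with
\[
HSI(Y_\delta,P_\delta)=HF(L_\delta,\underline{L}),\qquad \delta\in\{\alpha,\beta,\gamma\},
\]
and for each pair $(\delta,\mu)$ with $\delta\cdot\mu=-1$ the Lagrangians $L_\delta,L_\mu$ meet transversely at a single point, so that $HF(L_\mu,L_\delta)\cong HSI(S^3)\cong\Z{2}$.

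The case of $\Phi_1$ is essentially tautological. Unwinding the definition of $C\Phi_1$ in \cite[Sec.~5.1.5]{surgery}, it is exactly the quilted pair-of-pants product contracted with the unique generator $C_{\alpha\beta}\in HF(L_\beta,L_\alpha)$, while $CF_{W_{\alpha\beta},P_{\alpha\beta}}$ is by construction the same pair-of-pants contracted with the generator $C$ of $HSI(S^3,P_{\alpha\beta}|_{S^3})$. The transition function $\tau_{\alpha\beta}$ is precisely the prescription needed to identify the two bundle structures, hence the two seam conditions coincide and the two $\Z{2}$-generators match. Therefore $\Phi_1=F_{W_{\alpha\beta},P_{\alpha\beta}}$.

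For $\Phi_2$ the comparison is nontrivial because $C\Phi_2$ is defined via the quilted Lefschetz fibration of Figure~\ref{fig:PHI_2}, whose patch $P_0$ carries a single interior critical value with vanishing cycle $S$, while $CF_{W_{\beta\gamma},P_{\beta\gamma}}$ is a plain quilted pair-of-pants. I propose a degeneration/neck-stretching argument: consider a one-parameter family $(\underline{S}_R)_{R\in[0,\infty)}$ of quilted bases obtained from Figure~\ref{fig:PHI_2} by pulling the Lefschetz critical value away along a neck of length $R$, and study the parametric moduli space of index-zero holomorphic sections. At $R=0$ one recovers $C\Phi_2$; as $R\to\infty$ the base splits into a quilted disk $D_S$ containing the critical point, with boundary conditions $L_0=L_\gamma$ and $\tau_S L_0=L_\beta$, and a quilted pair-of-pants with seams $L_\gamma,L_\beta,\underline{L}$. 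Ruling out figure-eight and disk bubbling with the $\Symp$ hypotheses exactly as in the proof of Proposition~\ref{prop:Ymaps_vs_strip_shrinking}, the parametric moduli space compactifies to a $1$-manifold with boundary whose boundary matches the two ends, producing a chain homotopy between $C\Phi_2$ and the pair-of-pants map contracted with the Floer class $[D_S]\in HF(\tau_S L_0,L_0)$.

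The main obstacle is then to identify $[D_S]$ with the generator $C\in HSI(S^3,P_{\beta\gamma}|_{S^3})\cong\Z{2}$. This is the quilted analogue of Seidel's thimble computation \cite{Seidel}: the Lefschetz fibration on $P_0$ admits a unique regular holomorphic section through the critical point (the Lefschetz thimble emanating from $S$), and by the transversality results used in \cite{MW} this section is cut out transversely and has no bubbling. Since $HF(\tau_S L_0,L_0)\cong\Z{2}$ has a single nonzero class, counting this unique thimble forces $[D_S]=C$, completing the identification $\Phi_2=F_{W_{\beta\gamma},P_{\beta\gamma}}$. Combining the two cases yields the theorem.
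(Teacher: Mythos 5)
Your proposal is correct and follows essentially the same route as the paper: the $\Phi_1$ case is treated as tautological, and for $\Phi_2$ you degenerate the quilted Lefschetz fibration (pushing the critical value away / neck-stretching), obtaining via a parametrized moduli space a chain homotopy between $C\Phi_2$ and the pair-of-pants product contracted with a cocycle $c_{S,L_0}\in CF(L_0,\tau_S L_0)$, which you then identify with the generator $C$ by counting a unique regular index-zero section. The only cosmetic differences are a citation choice — the paper invokes \cite[Lemma 2.27]{Seidel} and \cite[Prop.~4.10]{WWtriangle} (via monotonicity) to pin down the unique ``constant'' section and rule out others, whereas you refer to the transversality results of \cite{MW} and to a thimble count — but both land on the same uniqueness and regularity statement.
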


The third morphism of the  exact sequence, constructed as a connecting  homomorphism, might not a priori be induced by $W_{\gamma\alpha}$. However, the same argument than the one used by  Lisca and Stipsicz for Heegaard-Floer   homology   \cite[Section 2]{LiscaStipsicz} implies: 
\begin{cor}\label{corinterpfleches} The maps $F_{W_{\alpha\beta}, P_{\alpha\beta}}$, $F_{W_{\beta\gamma}, P_{\beta\gamma} }$, and $F_{W_{\gamma\alpha}, P_{\gamma\alpha}}$ form an  exact sequence:
\[ \xymatrix{  HSI(Y_\beta,P_\beta)\ar[rr]^{F_{W_{\beta\gamma}, P_{\beta\gamma} }} & & HSI(Y_\gamma,P_\gamma)\ar[ld]^{\ \ \ \ \ F_{W_{\gamma\alpha}, P_{\gamma\alpha} }}  \\ & HSI(Y_\alpha,P_\alpha )\ar[lu]^{F_{W_{\alpha\beta}, P_{\alpha\beta}}}  & ,}\]
where the groups are with  coefficients in $\Z{2}$.
\end{cor}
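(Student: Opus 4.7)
The plan is to apply a mod-2 analogue of the Lisca--Stipsicz trick. Theorem~\ref{thinterpfleches} identifies only the first two maps of the surgery exact sequence of \cite[Theorem~1.3]{surgery} with cobordism maps, but by invoking this identification on each of the three cyclic rotations of the triad $(\alpha,\beta,\gamma)$, we can obtain exactness at each of the three vertices of the proposed triangle separately.

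The key observation is that the hypothesis $\alpha\cdot\beta=\beta\cdot\gamma=\gamma\cdot\alpha=-1$ is cyclically invariant, so the surgery exact sequence of \cite[Theorem~1.3]{surgery} applies equally to the triads $(\alpha,\beta,\gamma)$, $(\beta,\gamma,\alpha)$ and $(\gamma,\alpha,\beta)$. Applied to $(\alpha,\beta,\gamma)$, Theorem~\ref{thinterpfleches} identifies the first two maps of the associated long exact sequence as $F_{W_{\alpha\beta},P_{\alpha\beta}}$ and $F_{W_{\beta\gamma},P_{\beta\gamma}}$, whence exactness at $HSI(Y_\beta)$:
\[
\operatorname{im}(F_{W_{\alpha\beta},P_{\alpha\beta}})=\ker(F_{W_{\beta\gamma},P_{\beta\gamma}}).
\]
Applying the same result to $(\beta,\gamma,\alpha)$ would yield exactness at $HSI(Y_\gamma)$, with $F_{W_{\beta\gamma},P_{\beta\gamma}}$ and $F_{W_{\gamma\alpha},P_{\gamma\alpha}}$ appearing as the consecutive cobordism-induced maps; and applying it to $(\gamma,\alpha,\beta)$ would yield exactness at $HSI(Y_\alpha)$, with $F_{W_{\gamma\alpha},P_{\gamma\alpha}}$ and $F_{W_{\alpha\beta},P_{\alpha\beta}}$ as consecutive maps. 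Together these three local exactness statements are exactly the content of the corollary.

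The main piece of bookkeeping to verify is that the bundle data transport correctly under cyclic relabeling. The transition functions setting up $P_\alpha$, $P_\beta$, $P_\gamma$ are not literally cyclically symmetric (the prescription $\tau_{\beta\gamma}=1$ is singled out), but choosing the analogous asymmetric gauge after each cyclic rotation produces bundles that are canonically isomorphic to the original ones; since $HSI$ and the maps $F_{W,P,\gamma}$ depend on $P$ only up to bundle isomorphism, this causes no difficulty. I expect this verification to be the only real work: once the bundles are matched up, the exactness claim follows immediately from three invocations of Theorem~\ref{thinterpfleches}.
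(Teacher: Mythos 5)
Your proposal follows the same Lisca--Stipsicz cyclic strategy that the paper uses, and the high-level structure is right: apply the surgery exact sequence three times to cover the three local exactness conditions. However, the bookkeeping you defer is where the approaches actually diverge, and the claim you make to wave it away is not quite correct. You assert that after a cyclic rotation of $(\alpha,\beta,\gamma)$ the asymmetric gauge choices produce bundles canonically isomorphic to the originals, but this fails: the prescription $\tau_{\beta\gamma}=1$ versus $\tau_{\alpha\beta},\tau_{\gamma\alpha}$ nontrivial is precisely what makes $HSI(Y_\alpha,P_\alpha)$ appear as $HF(L_\alpha^-,\underline{L})$ while $HSI(Y_\beta,P_\beta)$ and $HSI(Y_\gamma,P_\gamma)$ appear as $HF(L_\beta,\underline{L})$ and $HF(L_\gamma,\underline{L})$ with no minus sign. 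Cyclically rotating the gauge convention would send $L_\beta$ to $L_\beta^-$, giving a genuinely different bundle (and a priori different Floer group) over $Y_\beta$. The paper sidesteps this entirely: it does not rotate the bundle data at all, but keeps the three Lagrangians $L_\alpha^-$, $L_\beta$, $L_\gamma$ fixed and applies the Dehn twist exact sequence of \cite[Theorem~5.2]{surgery} directly with the three cyclic assignments $(L_0,S)=(L_\gamma,L_\alpha^-)$, $(L_\alpha^-,L_\beta)$, $(L_\beta,L_\gamma)$, reading off exactness at each vertex in turn and re-identifying the two featured maps with $F_{W,\cdot}$ by the argument of Theorem~\ref{thinterpfleches} each time. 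So the strategy you propose is correct, but the specific verification you flag as "the only real work" does not go through as you state it; the fix is to avoid the bundle-isomorphism claim and work at the level of the fixed Lagrangians instead.
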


\begin{proof}[Proof of corollary \ref{corinterpfleches}] We use the cyclic symmetry of the surgery triad. Denote $L_\delta^\epsilon = \lbrace\mathrm{Hol}_\delta = \epsilon I \rbrace$ the Lagrangian in $\N((\partial Y)')$, and $\underline{L} = \underline{L}(Y, P_Y)$, so that:
\begin{align*}
HSI(Y_\alpha,P_\alpha )&= HF(L_\alpha^-, \underline{L}) , \\
HSI(Y_\beta,P_\beta)&=   HF(L_\beta, \underline{L}),   \\
HSI(Y_\gamma,P_\gamma)&= HF(L_\gamma, \underline{L}) .
\end{align*}

To prove \cite[Theorem~1.3]{surgery}, one  applies the Dehn twist exact sequence \cite[Theorem~5.2]{surgery} to $L_0 = L_\gamma$ and $S=L_\alpha^-$. It follows that $\mathrm{Ker}(F_{W_{\beta\gamma}, P_{\beta\gamma} }) = \mathrm{Im}(F_{W_{\alpha\beta}, P_{\alpha\beta} })$.

If one proceeds similarly with  $L_0 = L_\alpha^-$ and $S= L_\beta$, one gets a similar exact sequence  involving  $F_{W_{\beta\gamma}, P_{\beta\gamma} }$ and $F_{W_{\gamma\alpha}, P_{\gamma\alpha} }$.  It follows that $\mathrm{Ker}(F_{W_{\gamma\alpha}, P_{\gamma\alpha} }) = \mathrm{Im}(F_{W_{\beta\gamma}, P_{\beta\gamma} })$.

Finally, the choice $L_0 = L_\beta$ and $S=L_\gamma$ allows one to prove  
\[
\mathrm{Ker}(F_{W_{\alpha\beta}, P_{\alpha\beta} }) = \mathrm{Im}(F_{W_{\gamma\alpha}, P_{\gamma\alpha} })
,\]
 hence  the   announced exact sequence.
\end{proof}

\begin{proof}[Proof of theorem~\ref{thinterpfleches}]

First, $\Phi_1 = F_{W_{\alpha\beta}, P_{\alpha\beta}}$  by definition of the map $F_{W_{\alpha\beta}, P_{\alpha\beta}}$.

By pushing the critical value of the Lefschetz fibration to  the upper boundary  of the quilted surface, and then stretching the surface  (see figure \ref{contrac}), one can see  that $C\Phi_2$ is homotopic to the contraction of the pair-of-pant product  
\[
CF(L_0, \tau_S L_0 )\otimes CF(\tau_S L_0 , \underline{L}) \to CF( L_0 , \underline{L})
\]
with the cocycle $ c_{S,L_0} \in CF(L_0, \tau_S L_0 )$ defined by the  Lefschetz fibration specified in the figure. Indeed, this is a standard argument as in for example \cite[sec.~5.1.6]{surgery}, involving a parametrized moduli space.

It remains to notice that $ c_{S,L_0}$ coincides with  the generator $C$ used to define  $F_{W_{\beta\gamma}, P_{\beta\gamma}}$. This is due to the fact that $L_0$ and  $\tau_S L_0$ intersect transversely at a single point $x$, for which there exists a unique index 0 section, the  "constant" one (thinking of the fibration as trivial near that point), which is regular according to \cite[Lemma 2.27]{Seidel}. Indeed, by monotonicity, every other section of index zero would also have zero area and consequently be constant, therefore it is the only such section.
 Indeed, since $\dim S = 3\geq 2$, and since the generic fiber is monotone and simply connected, it follows from \cite[Prop.~4.10]{WWtriangle} that the fibration is monotone.
\end{proof}

\begin{figure}[!h]
    \centering
    \def\svgwidth{.7\textwidth}
    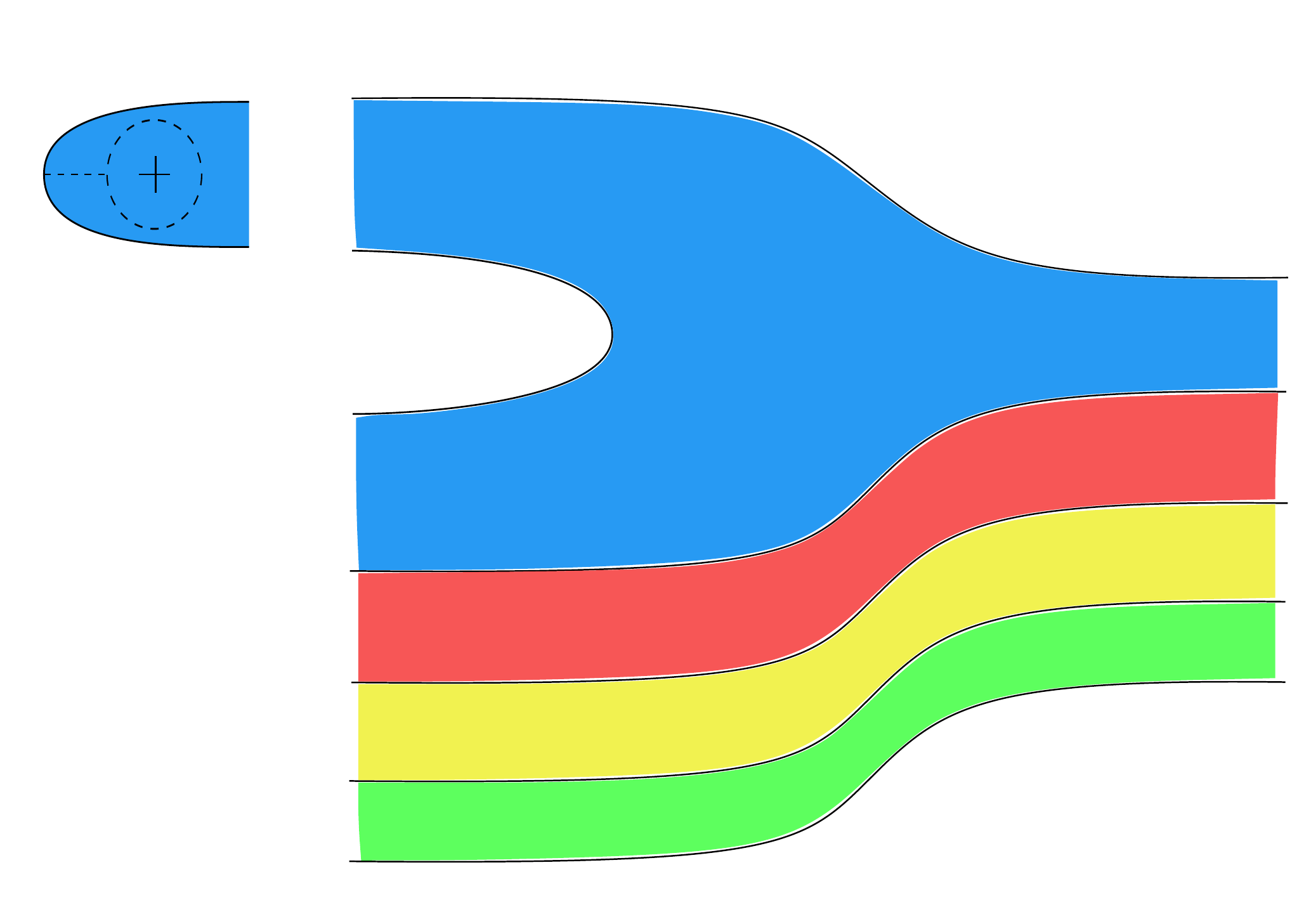
      \caption{Contraction of a quilted pair-of-pant with  a cocycle.}
      \label{contrac}
\end{figure}

\subsection{Examples and elementary properties}

\begin{prop}\label{cepetwo}Let $W = \cc P^2 \setminus \lbrace \text{two balls} \rbrace$. Since $H^2(W;\Z{2}) \simeq \Z{2}$, there are two isomorphism classes of $SO(3)$-bundles over $W$. Let $P$ stand for the trivial bundle, and $P'$ for one non-trivial one.
\begin{enumerate}
\item $F_{W,P} = F_{W,P'} = 0 $.
\item Denoting $\overline{W}$ the cobordism with opposite orientation, $F_{\overline{W},P}$ is an isomorphism, and $F_{\overline{W},P'} = 0 $.
\end{enumerate}
\end{prop}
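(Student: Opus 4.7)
Plan: realize $W = \cc P^2 \setminus 2B^4$ as the 2-handle cobordism $W_{\gamma\alpha}$ arising from a surgery triad on the unknot $K \subset S^3$. Taking $Y$ to be the unknot complement, with $\alpha$ its meridian, $\beta$ the 0-framed longitude, and $\gamma$ a $(\pm 1)$-framed longitude satisfying $\alpha\cdot\beta = \beta\cdot\gamma = \gamma\cdot\alpha = -1$, we have $Y_\alpha = Y_\gamma = S^3$, $Y_\beta = S^2\times S^1$, and with appropriate orientation conventions $W = W_{\gamma\alpha}$ while $\overline{W} = W_{\alpha\gamma}$.

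For part~(1) with trivial bundle, I apply Corollary~\ref{corinterpfleches}. Using $\mathrm{rk}(HSI(S^3)) = 1$ and (from Künneth, \cite[Th.~1.1]{surgery}) $\mathrm{rk}(HSI(S^2\times S^1)) = 2$, writing $f_i$ for the ranks of the three maps in the cyclic exact sequence $HSI(Y_\beta) \to HSI(Y_\gamma) \to HSI(Y_\alpha) \to HSI(Y_\beta)$, the rank-nullity identities $f_1 + f_2 = 1$, $f_2 + f_3 = 1$, $f_3 + f_1 = 2$ admit the unique solution $f_1 = f_3 = 1$, $f_2 = 0$, so $F_{W,P} = F_{W_{\gamma\alpha}} = 0$. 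For the non-trivial bundle $P'$, note that $w_2(P')$ generates $H^2(W;\Z{2}) = \Z{2}$ and pairs non-trivially with the exceptional sphere $E \simeq \cc P^1 \subset W$ (since $[E]^2 = +1$ generates $H_2(W;\Z{2})$). Hence $P'|_E$ is the non-trivial $SO(3)$-bundle on $S^2$, admitting no flat connection; this forces the extended moduli space $\Mg(W,P')$ and thus the cocycle $C \in HF(L_1,L_0)$ of the pair-of-pants defining $F_{W,P'}$ to vanish.

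For part~(2) with trivial bundle, $\overline{W} = W_{\alpha\gamma}$ is not a cobordism of the exact triangle of Corollary~\ref{corinterpfleches}, so I compute $F_{\overline{W},P}$ directly via the pair-of-pants. The Lagrangians $L_\alpha = \{A = I\}$, $L_\gamma = \{AB = I\}$, and $\underline{L} = L_Y = \{B = I\}$ in $\Nc(T')$ meet transversely at the trivial connection $[I]$. Monotonicity combined with regularity from \cite[Lemma~2.27]{Seidel} singles out the constant triangle at $[I]$ as the unique index-zero contribution to $\Phi(C \otimes [I])$, so $F_{\overline{W},P}$ maps the generator of $HSI(S^3) = \Z{2}$ to itself and is an isomorphism. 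For the non-trivial bundle, the same obstruction argument as in part~(1) applied to $E \subset \overline{W}$ (where $[E]^2 = -1 \equiv 1 \pmod 2$) gives $F_{\overline{W},P'} = 0$.

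The hardest step will be making the non-trivial bundle argument rigorous: showing that the topological obstruction (no flat connection on $P'|_E$) translates cleanly into the vanishing of the quilted cocycle $C$ requires carefully tracking how the $SO(3)$-bundle on the 4-dimensional 2-handle enters the definition of the Lagrangians $L_0, L_1$ and of the holomorphic triangle count, and then confirming that the emptiness of $\Mg(W,P')$ indeed forces $C = 0$.
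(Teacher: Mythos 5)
Your plan for part~(1) with the trivial bundle matches the paper's: apply Corollary~\ref{corinterpfleches} to a surgery triad on the unknot and use rank counting (the paper uses the triad $\infty,1,2$, so the third group is $HSI(\mathbb{RP}^3)\simeq\Z{2}^2$, while you take $\infty,0,\pm 1$ with $HSI(S^2\times S^1)\simeq\Z{2}^2$ — either works). The difficulty is everywhere you invoke the non-trivial bundle $P'$.

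The statement \emph{``no flat connection on $P'|_E$ $\Rightarrow$ $\Mg(W,P')$ empty $\Rightarrow$ $C=0$''} does not engage with the actual construction. The class $C$ is defined in the paper as the generator of $HF(L_1,L_0)\simeq HSI(S^3)\simeq\Z{2}$, where $L_0,L_1\subset\Nc(T')$ are the Lagrangian $3$-spheres coming from the two solid-torus fillings of the torus $T$; these spheres meet transversally in exactly one point for \emph{any} choice of bundle, so $C$ is never zero. There is no extended moduli space $\Mg(W,P')$ of the $4$-manifold $W$ anywhere in the definition of $F_{W,P,\gamma}$: the speculative instanton-moduli picture of subsection~\ref{ssec:bigpic} is not part of the construction. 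What actually changes between $P$ and $P'$ is the identity of the Lagrangian $L_1$ itself (it becomes $\lbrace\mathrm{Hol}=-I\rbrace$ rather than $\lbrace\mathrm{Hol}=+I\rbrace$, as in the $L^-_\delta$ notation of the surgery-triad section), not whether $C$ survives. Your heuristic points in the right direction as intuition, but it cannot be turned directly into a proof that $F_{W,P'}=0$, and it certainly cannot distinguish $F_{W,P'}=0$ from $F_{\overline W,P'}=0$ versus $F_{\overline W,P}\ne 0$, since $E$ has the same mod-$2$ self-intersection in $W$ and in $\overline W$.

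The paper's route for the nontrivial cases is purely algebraic and avoids this entirely. In part~(1) the exact triangle and rank count work verbatim with $P'$ in place of $P$, because the third group ($HSI(\mathbb{RP}^3)$ with either bundle, or $HSI(S^2\times S^1)$ in your triad) still has rank $2$, so the same arithmetic gives rank zero for the $S^3\to S^3$ arrow. In part~(2) the paper applies the triad $\infty,-1,0$: with the trivial bundle $P$ on $\overline W$ the middle group is $HSI(S^2\times S^1,\text{nontrivial bundle})=0$, forcing $F_{\overline W,P}$ to be an isomorphism by exactness; with $P'$ the middle group is $HSI(S^2\times S^1)\simeq\Z{2}^2$ and the rank count forces $F_{\overline W,P'}=0$. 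This is the key input you are missing: the vanishing of the twisted $HSI$ of $S^2\times S^1$. Your direct pair-of-pants computation for $F_{\overline W,P}$ (uniqueness and regularity of the constant triangle at $[I]$, using monotonicity and Seidel's Lemma~2.27) is a genuinely different and plausible approach, and it closely parallels the argument in the proof of Theorem~\ref{thinterpfleches}; however, without the corrected description of how $P'$ changes the seam Lagrangian, you would still have to do a second direct computation for $F_{\overline W,P'}$, which your current text does not supply. I would suggest replacing both nontrivial-bundle arguments by the exact-triangle/rank argument, which handles all four claims uniformly.
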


\begin{proof}
Recall that $W$ corresponds to the attachment of a 2-handle to $S^3$ along  the trivial knot, with  framing 1.
\begin{enumerate}
\item  The surgery exact sequence applied  to the triad corresponding to the surgeries $\infty$, 1,2  on the trivial knot is of the form, with $Q$ being either $P$ or $P'$, and denoting $S^3_\alpha$ the surgery $\alpha$ on the  trivial knot:
\[ \xymatrix{  HSI(S^3_{1})  \simeq \Z{2} \ar[rr] & & HSI(S^3_{2}) \simeq \Z{2}^2 \ar[ld] \\ & {HSI(S^3 _\infty ) \simeq \Z{2}} \ar[lu]^{F_{W,Q} }  & ,  } \]
which implies the claimed result.

\item Consider now the  triad corresponding to the surgeries $\infty$, -1,0  on the trivial knot. If one endows $S^3_0$  with a nontrivial bundle $P_{S^3_0}$, one obtains:
\[ 
\xymatrix{  HSI(S^3_{-1})  \simeq \Z{2} \ar[rr] & & HSI(S^3_{0},P_{S^3_0}) = 0 \ar[ld] \\ & {HSI(S^3 _\infty ) \simeq \Z{2}} \ar[lu]^{F_{\overline{W},P} }  & ,  } 
\]
and in the two other cases, one obtains:
\[ \xymatrix{  HSI(S^3_{-1})  \simeq \Z{2} \ar[rr] & & HSI(S^3_{0}) \simeq \Z{2}^2 \ar[ld] \\ & {HSI(S^3 _\infty ) \simeq \Z{2}} \ar[lu]^{F_{\overline{W},P'} }  & .  } \]
\end{enumerate}

\end{proof}

We introduce the following  invariant  for closed 4-manifolds:
\begin{defi}Let $X$ be a closed  4-manifold, and $P$ an $SO(3)$-bundle over $X$. Denote $W$ the manifold $X$ with two open balls removed, seen as a cobordism from $S^3$ to $S^3$. Its corresponding map 
\[
F_{W,P_{|W}}\colon HSI(S^3) \to HSI(S^3)
\]
is a multiplication by some number in $\Z{2}$, which we denote $\Psi_{X,P}$. 
\end{defi}


The following proposition \ref{prop:compohoriz}  describes the effect of a "horizontal"  composition  of cobordisms (which  corresponds a horizontal composition of 2-morphisms in the 2-category  $\textbf{Cob}_{2+1+1}$).
\begin{defi} Let  $W$ and $W'$ be two cobordisms, going respectively from $Y_1$ to $Y_2$, and from $Y_1'$ to $Y_2'$. Let $l$, $l'$ be two paths in $W$ and $W'$ connecting the two boundaries. One can form their \emph{horizontal composition} by removing tubular neighborhoods  of $l$ and $l'$, and gluing together the two remaining pieces along the boundaries of the removed pieces. One obtains a new cobordism $W\#_{vert}W'$ from $Y_1\# Y_1'$ to $Y_2\# Y_2' $. If $P$ and $P'$ are $SO(3)$-bundles over $W$ and $W'$ respectively, one can use the trivialization along the base paths to connect them to an $SO(3)$-bundle $P\#_{vert}P'$ over $W\#_{vert}W'$ 
\end{defi}

\begin{prop}\label{prop:compohoriz} (Horizontal composition)
Let $W\#_{vert}W'$ be a vertical composition as before. Then, under the following identifications (\cite[Th~1.1]{surgery}, recall that the groups are with coefficients in $\Z{2}$, hence there are no  torsion summands):
\begin{align*}
 HSI (Y_1\# Y_1', P_1 \# P_1') &= HSI (Y_1, P_1 ) \otimes HSI (Y_1',P_1') \\ HSI (Y_2\# Y_2', P_2 \# P_2') &= HSI (Y_2, P_2) \otimes HSI (Y_2', P_2'),\\
\end{align*}
The map associated to $W\#_{vert}W'$ is given by:
\[ 
F_{W\#_{vert}W',P\#P'} = F_{W,P}\otimes F_{W',P'} .
\]
In particular, if $X$ is a closed manifold and $W\#X$ stands for the connected sum at an interior point, 
\[ 
F_{W\#X,P_W \# P_X} = \Psi_{X,P_X} \cdot F_{W,P_W}. 
\]
\end{prop}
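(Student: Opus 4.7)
The strategy is to decompose $W$ and $W'$ into elementary handle cobordisms and reduce, via vertical composition (Proposition~\ref{prop:compovertic}), to a single-handle statement. Fix Cerf decompositions $W = W_1 \cup \cdots \cup W_n$ and $W' = W_1' \cup \cdots \cup W_m'$ for which the basepaths $l$ and $l'$ are vertical flow lines of the defining Morse functions, hence disjoint from the cores and cocores of every handle. Writing $Y_1 = Y_1^{(0)}, Y_1^{(1)}, \ldots, Y_1^{(n)} = Y_2$ and similarly for $W'$, the horizontal composition then inherits a handle decomposition
\[
W \#_{vert} W' = \tilde{W}_1 \cup \cdots \cup \tilde{W}_n \cup \tilde{W}_1' \cup \cdots \cup \tilde{W}_m',
\]
where $\tilde{W}_i$ attaches the same handle as $W_i$ but now to $Y_1^{(i-1)} \# Y_1'$ inside the first connect-summand, and symmetrically $\tilde{W}_j'$ attaches the handle of $W_j'$ to the second summand. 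Under the Künneth isomorphism of \cite[Th~1.1]{surgery}, the announced formula reduces to the two identities
\[
F_{\tilde{W}_i} = F_{W_i} \otimes \mathrm{Id}_{HSI(Y_1', P_1')}, \qquad F_{\tilde{W}_j'} = \mathrm{Id}_{HSI(Y_2, P_2)} \otimes F_{W_j'},
\]
whose composition yields exactly $F_W \otimes F_{W'}$.

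The 1-handle and 3-handle cases are immediate from the definitions. For instance, if $W_i$ is a 1-handle then $F_{W_i}(x) = C_+ \otimes x$; the same definition applied to $\tilde{W}_i$, whose target Künneth-decomposes as $HSI(S^2 \times S^1) \otimes HSI(Y_1^{(i-1)}) \otimes HSI(Y_1')$, gives $F_{\tilde{W}_i}(x \otimes y) = C_+ \otimes x \otimes y$. The 3-handle case is parallel.

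The 2-handle case is the main point. If $W_i$ is a 2-handle along a framed knot $K \subset Y_1^{(i-1)}$, choose a Cerf decomposition of $Y_1^{(i-1)} \# Y_1'$ that separates the connect-sum region from $K$, as in the proof of the Künneth formula. Then the punctured torus $T'$ around $K$, the Lagrangian $L_0 \subset \Nc(T')$, and the cocycle $C \in HF(L_0, L_1) = HSI(S^3)$ defining the 2-handle map all coincide with those for $W_i$, while the $Y_1'$ side enters only through product correspondences. The Künneth factorization of the quilted pair-of-pants moduli spaces, which is the chain-level content of the proof of \cite[Th~1.1]{surgery}, then identifies the pair-of-pants product with $C$ on $Y_1^{(i-1)} \# Y_1'$ with the corresponding product on $Y_1^{(i-1)}$ tensored with the identity on $Y_1'$. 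This gives $F_{\tilde{W}_i} = F_{W_i} \otimes \mathrm{Id}$. Verifying this splitting of quilted holomorphic pair-of-pants is the main technical step and the principal obstacle I anticipate, though no moduli-theoretic input beyond what is already used in the Künneth proof should be required.

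The corollary for $W \# X$ follows by viewing it as the horizontal composition $W \#_{vert} W_X$, with $W_X = X \setminus \{\text{two balls}\}$ a cobordism from $S^3$ to $S^3$ whose associated map is, by definition, $\Psi_{X, P_X} \cdot \mathrm{Id}_{HSI(S^3)}$. Applying the main formula and the identification $HSI(Y \# S^3) = HSI(Y) \otimes HSI(S^3) = HSI(Y)$ yields $F_{W \# X, P_W \# P_X} = \Psi_{X, P_X} \cdot F_{W, P_W}$.
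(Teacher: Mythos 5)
Your proof is correct and follows essentially the same strategy as the paper's: fix handle decompositions of $W$ and $W'$, use the fact that the generalized Lagrangian correspondence for a connected sum passes through a point in the middle (so quilted curves decouple into product contributions from each summand), and conclude via the K\"unneth isomorphism. The paper's own proof is more compact---it asserts the chain-level tensor factorization $CF_{W\#_{vert}W',P\#P'} = CF_{W,P}\otimes CF_{W',P'}$ in one stroke and then invokes naturality of the K\"unneth formula, rather than unpacking this into the per-handle identities $F_{\tilde W_i} = F_{W_i}\otimes\mathrm{Id}$ as you do---but that is a presentational difference, not a substantive one.
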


\begin{proof} Take two  decompositions of   $W$ and $W'$ in elementary cobordisms, this  induces a decomposition for $W\#_{vert}W'$. The fact that there is a point in the middle of the \glag\ guarantees that the maps coming from the  handles of $W$ don't interact with  those of $W'$:  at the chain level, the induced map corresponds to $CF_{W,P}\otimes CF_{W',P'}$. The result at the level of homology groups follows then from the naturality of the Künneth formula, see \cite[Theorem 11.10.2]{tomDieck}.  
\end{proof}

From this result and  propositon \ref{cepetwo}, it follows:

\begin{cor}\label{eclatement} 
Let $W$ be a 4-cobordism, 
\begin{enumerate}
\item $F_{W\# \cc P^2,P} =  0 $, for any $SO(3)$-bundle $P$
\item If $P$ is nontrivial in restriction to $\overline{\cc P}^2$, then 
\[
F_{W\# \overline{\cc P}^2,P} =  0 ,
\]
otherwise $F_{W\# \overline{\cc P}^2,P} =  F_{W,P_{|W}} $.
\end{enumerate}
\end{cor}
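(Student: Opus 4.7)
The plan is to derive this corollary as a direct combination of Proposition~\ref{prop:compohoriz} (the horizontal composition formula) and Proposition~\ref{cepetwo} (the values of the cobordism maps for $\cc P^2$ with a pair of balls removed, with each choice of bundle). Since $W\#X$ is exactly the horizontal composition of $W$ with $X$ minus two open balls, Proposition~\ref{prop:compohoriz} gives
\[
F_{W\# X, P_W\# P_X} = \Psi_{X, P_X}\cdot F_{W,P_W},
\]
so the task reduces to computing the scalar $\Psi_{X,P_X}\in \Z{2}$ in each of the three relevant cases.

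For part~(1), I would observe that by definition $\Psi_{\cc P^2,Q}$ is the multiplication constant of the map $F_{\cc P^2\setminus\{\text{two balls}\}, Q}\colon HSI(S^3)\to HSI(S^3)$, and that this is exactly the map computed in Proposition~\ref{cepetwo}(1), which vanishes regardless of the isomorphism class of the bundle $Q$ over $\cc P^2$ (there are only two such classes by $H^2(\cc P^2;\Z{2})\simeq \Z{2}$). Hence $\Psi_{\cc P^2,P_{\cc P^2}}=0$, and the horizontal composition formula immediately gives $F_{W\#\cc P^2,P}=0$.

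For part~(2), the same reasoning applies to $\overline{\cc P}^2$, but now the two isomorphism classes of bundles give different values: Proposition~\ref{cepetwo}(2) tells us that $F_{\overline{W},P}$ is an isomorphism of $HSI(S^3)\simeq \Z{2}$, i.e.\ the identity, while $F_{\overline{W},P'}$ vanishes. Translating into the invariant $\Psi$, this reads $\Psi_{\overline{\cc P}^2,P}=1$ and $\Psi_{\overline{\cc P}^2,P'}=0$. Plugging each into the horizontal composition formula yields the two announced statements: when $P_{|\overline{\cc P}^2}$ is trivial, the scalar is $1$ and we recover $F_{W,P_{|W}}$, and when $P_{|\overline{\cc P}^2}$ is nontrivial the scalar is $0$ and the map vanishes.

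There is essentially no obstacle here beyond correctly matching bundles: one must check that an $SO(3)$-bundle over $X\setminus\{\text{two balls}\}$ determined by the restriction of a global bundle over $W\#X$ corresponds, under the classification by $H^2(X;\Z{2})$, to the same class as the restriction of $P$ to $X\subset W\#X$. This is automatic because removing two balls does not change $H^2(\cdot;\Z{2})$ for either $\cc P^2$ or $\overline{\cc P}^2$, so the two notions of \emph{nontrivial} agree.
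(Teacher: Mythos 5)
Your proof is correct and follows exactly the paper's intended argument: the corollary is stated in the paper as an immediate consequence of Proposition~\ref{prop:compohoriz} together with Proposition~\ref{cepetwo}, which is precisely your route of computing $\Psi_{X,P_X}$ in each case and feeding it into the horizontal composition formula. Your closing remark about matching bundle classes under $H^2(\cdot;\Z{2})$ is a sensible sanity check but not a genuine obstacle, as you correctly note.
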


\begin{remark}We observe here a slight   difference with Heegaard-Floer homology: if $(Y_\alpha, Y_\beta, Y_\gamma)$ is a surgery triad and $W_{\delta \gamma}$ denotes the handle attachment 4-cobordisms, then the three morphisms associated to $W_{\delta \gamma}$, endowed with the trivial bundles, don't form an exact sequence. Indeed, for the triad $(S^3_0, S^3_1, S^3_\infty)$ associated to the  trivial knot in the sphere, the cobordism going from  $S^3_1$ to $S^3_\infty$ induces an isomorphism, even though $HSI(S^3_0) = HSI(S^2\times S^1)$ has rank 2.
\end{remark}

\bibliographystyle{alpha}
\bibliography{biblio}

\end{document}